\theoremstyle{plain}
\newtheorem{theo}{Theorem}[section]
\newtheorem{lem}{Lemma}[section]%
\newtheorem{prop}{Proposition}[section]%
\newtheorem{cor}{Corollary}[section]
\theoremstyle{definition}
\newtheorem{defi}{Definition}%
\newtheorem{example}{Example}%
\newtheorem{ass}{Assumption}
\theoremstyle{remark}
\newcommand{\qedf}{\hfill\filledsquare}
\newenvironment{rem}
  {\pushQED{$\qedf$}\rems}
  {\popQED\endrems}
\crefname{defi}{Defn.}{Definitions}
\crefname{ass}{Asm.}{Assumptions}
\crefname{theo}{Thm.}{Theorems}
\crefname{lem}{Lem.}{Lemmata}
\crefname{rem}{Rmk.}{Remarks}
\crefname{rems}{Rmk.}{Remarks}
\crefname{cor}{Cor.}{Corollaries}
\crefname{prop}{Prop.}{Propositions}
\renewcommand{\R}{\mathbb{R}}
\renewcommand{\epsilon}{\varepsilon}
\newcommand{\N}{\mathbb N}
\newcommand{\eps}{\epsilon}
\renewcommand{\tt}{\tilde t}
\newcommand{\ve}{v_\epsilon}
\newcommand{\xe}{x_\epsilon}
\newcommand{\sL}{{\mathsf{L}\!}}
\newcommand{\sBV}{{\mathsf{BV}}}
\newcommand{\sW}{\mathsf W}
\newcommand{\sC}{\mathsf C}
\newcommand{\sTV}{\mathsf{TV}}
\newcommand{\mL}{\mathcal{L}}
\newcommand{\cC}{\mathcal C}
  \newcommand{\sgn}{\ensuremath{\textnormal{sgn}}}
\renewcommand{\e}{\mathrm e}
\renewcommand{\:}{\mathrel{\coloneqq}}
\newcommand{\equivd}{\ensuremath{\vcentcolon\equiv}}
\newcommand{\OT}{{\Omega_{T}}}
\newcommand{\loc}{\textnormal{loc}}
\newcommand{\sk}{\,;\!\!}
\newcommand{\dd}{\ensuremath{\,\mathrm{d}}}
\DeclareMathOperator{\supp}{supp}
\DeclareMathOperator*{\esssup}{ess-\sup}
\DeclareMathOperator*{\essinf}{ess-\inf}
\newcommand{\cW}{\ensuremath{\mathcal{W}}}
\newcommand{\cZ}{\mathcal Z}
\newcommand{\cX}{\mathcal X}
\newcommand{\uv}{\underline{v}}
\begin{document}


\title[Discontinuous nonlocal conservation laws and related discontinuous ODEs]{Discontinuous nonlocal conservation laws and related discontinuous ODEs}

\subtitle{Existence, Uniqueness, Stability and Regularity}

\author*[1,2]{\fnm{Alexander} \sur{Keimer}}\email{keimer@berkeley.edu}

\author*[2,3]{\fnm{Lukas} \sur{Pflug}}\email{lukas.pflug@fau.de}


\affil[1]{\orgdiv{Institute of Transportation Studies}, \orgname{UC Berkeley}, \orgaddress{\street{Sutardja Dai Hall}, \city{Berkeley}, \postcode{94720}, \state{California}, \country{USA}}}

\affil[2]{\orgdiv{Competence Unit for Scientific Computing}, \orgname{Friedrich-Alexander University Erlangen-Nuremberg (FAU)}, \orgaddress{\street{Martensstr.~5a}, \city{Erlangen}, \postcode{91058}, \state{Bavaria}, \country{Germany}}}

\affil[3]{\orgdiv{Department of Mathematics, Chair of Applied Mathematics (Continuous Optimization)}, \orgname{Friedrich-Alexander University Erlangen-Nuremberg (FAU)}, \orgaddress{\street{Cauerstr.~11}, \city{Erlangen}, \postcode{91058}, \state{Bavaria}, \country{Germany}}}


\abstract{We study nonlocal conservation laws with a discontinuous flux function of regularity \(\sL^{\infty}(\R)\) in the spatial variable and show existence and uniqueness of weak solutions in \(\sC\big([0,T]\sk \sL^{1}_{\loc}\big)\), as well as related maximum principles. 
We achieve this well-posedness by a proper reformulation in terms of a fixed-point problem. This fixed-point problem itself necessitates the study of existence, uniqueness and stability of a class of discontinuous ordinary differential equations.
On the ODE level, we compare the solution type defined here with the well-known Carath\'eodory and Filippov solutions.}

\keywords{Discontinuous ordinary differential equations, stability theory, conservation laws, nonlocal conservation laws, discontinuous velocity, discontinuous flux}


\pacs[MSC Classification]{34A12,34A36,35L03,35L65,35Q99,35R09,45K05}

\maketitle

\section{Introduction}\label{sec:intro}
In this contribution we study nonlocal conservation laws with a discontinuous part of the velocity in space. The discontinuity enters the equation as a multiplicative term and is assumed to be bounded away from zero. The only additional requirement is that it possesses the regularity \(\sL^{\infty}\). Nonlocal refers to the fact that the flux of the conservation law at a given space-time point not only depends on the solution at this point, but also on a spatial averaging around this position by means of a convolution, in equations
\begin{align}
    q_{t}+ \big(v(x)V(\gamma\ast q)\,q\big)_x &=0, \label{eq:NBL}
\end{align}
with \textbf{discontinuous} part of the velocity \(v\in\sL^{\infty}(\R;\R_{>\uv})\) for \(\uv \in \R_{>0}\), \textbf{Lipschitz-continuous} part of the velocity $V \in \sW^{1,\infty}_{\loc}(\R)$ and \textbf{nonlocal weight}  \(\gamma \in \sBV(\R;\R_{\geq 0})\). For details see \cref{defi:disc_conservation_law}.

A variety of results for nonlocal conservation laws have been provided over the last few years \cite{keimer1,coron,ackleh,groeschel,keimer2,amorim,colombo_nonlocal,betancourt,zumbrun,pflug,aggarwal,pflug2,spinola,spinolo,ngoduy,teixeira,colombo2012,pflug3,pflug4,coron_pflug,spinolo_2,sarkar,keimer3,li,colombo_guerra_2,coron2013output,chen2017global,baker1996analytic,piccoli2013transport,difrancesco2019deterministic,bressan2019traffic,colombo2018local,colombo2015nonlocal,colombo2018nonlocal,chalons2018high,friedrich2019maximum,lee2019thresholds,ridder2019traveling,bayen2020modeling,coclite2020general,coclite2021existence,bressan2021entropy,COLOMBO20211653,karafyllis2020analysis,Filippis,crippa2013existence,scialanga,chiarello,blandin2016well,chiarello2019non-local,goatin2019wellposedness,gong2021weak,kloeden}, but only in recent publications \cite{chiarello2021existence,chiarello2021nonlocal} has a discontinuity been considered exactly as denoted in \cref{eq:NBL}.
The authors use Entropy methods together with a type of Godunov discretization scheme and a viscosity approximation to demonstrate well-posedness. They also present a maximum principle for a discontinuity with one jump, where the discontinuity is monotonically chosen so that the solution cannot increase.  This can be envisioned by considering a nonlocal version of the classical LWR model in traffic (\cite{lighthill1955kinematic}) and assuming that traffic flows to the right. Then, if the discontinuous part of the velocity is monotonically increasing, the velocity is faster after each jump, meaning that no increase of density can appear around the discontinuities.

Describing our approach in more general terms, we consider the discontinuous velocity as an \(\sL^{\infty}(\R)\) function that is positive and bounded away from zero, and we then deal with the dynamics introduced in \cref{eq:NBL} and detailed in \cref{defi:disc_conservation_law}. We show that weak solutions exist and are unique \textbf{without} an Entropy condition, and present several maximum principles under which the solution exists semi-globally. In contrast to the Diperna Lions ansatz \cite{diperna}, where the existence of ODEs is shown by studying the corresponding (linear) conservation law, we tackle the problem by formulating the characteristics of the conservation law as a fixed-point problem (as we first described in \cite{pflug} based on the idea proposed in \cite{wang}) and dealing with the corresponding discontinuous ODE. This emerging nonlinear discontinuous ODE, then reads as
\begin{align}
    x'(t) = v(x(t)),\lambda(t,x(t)) \label{eq:IVP}
\end{align}
with \textbf{discontinuous} \(v\in\sL^{\infty}(\R;\R_{>\uv})\) for \(\uv \in \R_{>0}\) and   \(\lambda\in \sL^{\infty}\big((0,T);\sW^{1,\infty}(\R)\big)\) \textbf{Lipschitz-continuous} w.r.t.\ the spatial variable. For details see \cref{defi:disc_ODE}.

For the broad theory on discontinuous ODEs and initial value problems we refer the reader to \cite{filippov1960differential,filippov1988,bressan1998uniqueness,bownds1970uniqueness, coddington1955theory,fjordholm2018sharp,osgood1898beweis,hu1991differential,binding1979differential,agarwal1993uniqueness}. 
As the solution to the discontinuous ODE is later subject to the aforementioned fixed-point problem, we not only show existence and uniqueness of 
solutions, but also stability and continuity results. This is not covered by the established theory of discontinuous ODEs and requires the specific structure of the discontinuous ODE considered here.
Having established these stability estimates and results, demonstrating the existence and uniqueness of discontinuous nonlocal conservation laws on small time horizons is straightforward following the approaches adopted in \cite{pflug}. This is supplemented by an approximation result in a ``weak'' topology, which ultimately enables us to present different types of maximum principles resulting in semi-global well-posedness.
\subsection{Outline}
In \cref{sec:intro}, we introduce the problem and compare our results with those in the literature. We conclude the section with some basic definitions in \cref{subsec:basic_definitions}, which specify what we mean by ``solutions to the introduced problem class.''

\Cref{sec:discontinuous_ODE} is dedicated to the well-posedness and stability properties of solutions to the class of discontinuous ODEs introduced. Having defined what we mean by solutions and stated required assumptions, we then concentrate in \cref{subsec:existence_uniqueness} on the existence and uniqueness of solutions and how they compare to Carath\'eodory and Filippov solutions.
For the existence theory for nonlocal conservation laws in \cref{sec:nonlocal_well_posedness} we require stability of the characteristics with regard to input datum, In \cref{subsec:stability_initial_datum_velocities} we thus consider the stability of the solutions to the discontinuous ODE with respect to initial datum, Lipschitz velocity and discontinuous velocity in a suitable topology.
\Cref{subsec:time_continuity_partial_2} considers the regularity of the derivative of solutions of the discontinuous ODE with respect to the initial datum in the topology induced by \(\sC\big([0,T];\sL^{1}_{\loc}(\R)\big)\), another important ingredient for the well-posedness of the discontinuous nonlocal conservation law studied later. 

In \cref{sec:nonlocal_well_posedness} we finally study the described class of discontinuous nonlocal conservation laws,  beginning by presenting the assumptions on the data involved. In broad terms, for the initial datum and the discontinuous velocity we assume only \(\sL^{\infty}\) regularity. This is identical to the assumption described for the discontinuous ODE in \cref{sec:discontinuous_ODE}. In \cref{subsec:well_posed_nonlocal} we then study the well-posedness of the discontinuous nonlocal conservation law via formulating a fixed-point problem in the Banach space \(\sL^{\infty}((0,T);\sL^{\infty}(\R))\) and using the method of characteristics. We first establish well-posedness of solutions on small time horizons, followed by stability results for the solution with respect to the discontinuous and continuous part of the velocity. We also establish stability for the initial datum in a weak topology, enabling the approximation of solutions by smooth solutions of the corresponding ``smoothed'' nonlocal conservation law. Under relatively mild additional assumptions on the nonlocal kernel and the Lipschitz-continuous velocity, for nonnegative initial datum we show different versions of maximum principles in \cref{subsec:maximum_principle}. One version states that the \(\sL^{\infty}\) norm of the solution can only decrease over time providing the discontinuity is monotonically decreasing, while another only gives uniform upper bounds on the solutions for a general discontinuity. These results also imply the semi-global well-posedness of the solutions. 

We conclude the contribution in \cref{sec:conclusions} with some open problems.
\subsubsection*{Perspective from (local) conservation laws}
From the perspective of approximating local conservation laws by nonlocal conservation laws \cite{coclite2020general,pflug4,bressan2019traffic,bressan2021entropy,COLOMBO20211653,spinolo},
we consider the nonlocal approximations of the following  discontinuous (local) conservation laws:
\begin{align*}
   \ q_{t}+ \big(v(x)\cdot f(q)\big)_x = 0,
\end{align*}
with \(f\equiv V\cdot  \mathrm{Id}\) for \(V\in \sW^{1,\infty}_{\loc}(\R)\) and \(v \in \sL^\infty(\R;\R_{\geq \uv}),\ \uv\in\R_{>0}\). 
Thus, we are dealing with the nonlocal approximation of a multiplicative discontinuous -- in space -- velocity field. However, we will not be studying this limiting behaviour in this work.

Discontinuous conservation laws have been considered in terms of questions of existence and uniqueness, and the need to prescribe the proper Entropy condition at the discontinuity in order to single out the proper (and potentially physical reasonable) solution among the infinite number of weak solutions. A vast number of papers on these topics have been published. For the sake of brevity, we refer the reader to \cite{garavello2007conservation,adimurthi2005optimal,buerger2009engquist,klausen1999stability,klingenberg1995convex,gimse1993conservation,audusse2005uniqueness,buerger2008conservation,ostrov2002solutions,andreianov2010vanishing,adimurthi2004godunov,karlsen2004relaxation,karlsen2004convergence,towers2000convergence,towers2001difference,adimurthi2011existence,adimurthi2007explicit,mishra2007existence,andreianov2011theory} and note that this list is by no means exhaustive.

\subsubsection*{Simplified results covered by the developed theory}
The results obtained can also be applied to special cases of nonlocal conservation law, i.e. nonlocal dynamics with Lipschitz continuous velocity function (setting \(v\equiv 1\))
\begin{align*}
    q_{t}+ \big(V(\gamma\ast q)\,q\big)_x &=0.
\end{align*}
This case (including source terms on the right hand side) has been intensively studied in \cite{pflug} and, indeed, we recover the same results also obtained in \cref{sec:nonlocal_well_posedness}. Thus, the theory proposed here generalizes the results presented in \cite{pflug}.

Discontinous linear conservation laws represent another specific case. Choosing \(V\equiv 1\) we have
\begin{align*}
    q_{t}+ \big(v(x)q\big)_x &=0
\end{align*}
and enriching this with a Lipschitz-continuous (in space) velocity \(\lambda:\OT\rightarrow\) (this is covered by our later analysis on discontinuous ODEs in \cref{sec:discontinuous_ODE}),
for the Cauchy problem we obtain
\begin{align*}
    q_{t}+ \big(v(x)\lambda(t,x) q\big)_x &=0.
\end{align*}
This is supplemented by an initial condition in \(\sL^{\infty}\) that there is a unique weak solution.
Surprisingly, linear conservation laws with discontinuous velocities have not been considered intensively. We refer the reader to \cite{crippa2011lagrangian}, where the author studies
\[
\rho_{t}+ \Big(f(t,x)\rho\Big)_{x}=0
\]
with 
\begin{itemize}
\item \(f\) continuous and nonnegative,
\item \(f\) of such a form that the solutions to the corresponding ODEs do not blow up in finite time (for instance assuming that \(f\) can grow at most linearly with regard to the spatial variable),
\item  the sets of points where \(f\) is zero are somewhat ``nice'' (see \cite[(A1)-(A3), p. 3138]{crippa2011lagrangian}).
\end{itemize}
 
 However, this setup differs from our considered class of equations as we allow \(\sL^{\infty}\) regularity and have no sign restrictions for the Lipschitz-part.
\cite{besson2007solutions} obtains results for velocities of regularity \(\sL^{\infty}\) with the additional assumption that \(\operatorname{div}(f)\in \sL^{\infty}\). The second assumption is weak for multi-D equations as considered in that publication. However, in the scalar case this assumption boils down to a Lipschitz-continuous velocity field, so the presented result can be seen as a generalization in the \(1\text{D}\) case.

The multi-D case is also considered in \cite{ambrosio2004transport,ambrosio2014continuity} where the velocity field is assumed to be in \(\sBV\) or admits other Sobolev regularity.

For the characteristics, \cite{petrova1999linear} uses Filippov solutions \cite{filippov1988} (see \cite[Eq. 2.5 and Eq. 2.6)]{petrova1999linear})
and considers the transport equation (not the conservation law) with a one-sided Lipschitz-continuous velocity processing unique solutions when assuming a continuous initial datum due to the uniqueness of backward characteristics in the sense of Filippov \cite{filippov1960differential}. Similar results are obtained in \cite{bouchut1998one}.
Finally, \cite{CLOP201945} considers again the multi-D case and states conditions on the vector field for existence and uniqueness of solutions. Thereby, the vector field is assumed to be continuous and for existence and uniqueness a ``weakened'' Lipschitz-condition based on the modulus of continuity is required. Solutions are thought of in the space of signed Borel measures.

\subsection{Basic definitions}\label{subsec:basic_definitions}
In this section, we rigorously state the problems that we will tackle. Starting with the discontinuous IVP, the problem reads as:
\begin{defi}[Discontinuous IVP]\label{defi:disc_ODE}
Let \(T\in\R_{>0}\) and \(\underline v \in\R_{>0}\) be given. For a \textbf{discontinuous} \(v\in \sL^\infty(\R\sk \R_{\geq \underline v })\), a \textbf{smooth} $\lambda \in \sL^{\infty}\big((0,T)\sk \sW^{1,\infty}(\R)\big)$ and $x_0 \in \R$, we consider the following discontinuous IVP
\begin{equation}
\begin{aligned}
    x'(t) &= v(x(t))\lambda(t,x(t)), && t\in[0,T]\\
    x(0) &= x_0.
    \end{aligned}
    \label{eq:discontinuous_ODE}
\end{equation}
Thereby,  $v$ represents the \textbf{discontinuous part of the velocity}, $\lambda$ the \textbf{Lipschitz continuous part of the velocity} and $x_0$ the \textbf{initial value}.
\end{defi}
As outlined above, the existence, uniqueness and regularity of solutions to the discontinuous IVP are strongly related to the existence and uniqueness of solutions to the following nonlocal conservation law:

\begin{defi}[The discontinuous (in space) nonlocal conservation law]\label{defi:disc_conservation_law}
Let \(T\in \R_{>0}\) be given and \(\OT \: (0,T)\times \R\). For \(q:\Omega_{T}\rightarrow\R\), \textbf{initial datum} \(q_{0}\in \sL^{\infty}(\R)\), \textbf{Lipschitz-continuous velocity} \(V\in \sW^{1,\infty}_{\loc}(\R)\), \textbf{discontinuous part of the velocity} \(v\in \sL^\infty(\R\sk \R_{\geq \underline v })\) with $\underline v \in \R_{>0}$ and \textbf{nonlocal weight} \(\gamma\in \sBV(\R\sk \R_{>0})\), we call the following Cauchy problem
\begin{align*}
    q_{t}(t,x)+ \partial_{x}\Big(v(x)V\big(\big(\gamma\ast q(t,\cdot)\big)(x)\big)q(t,x)\Big)&=0 && (t,x)\in\OT\\
    q(0,x)&=q_{0}(x)&& x\in\R
\end{align*}
    a \textit{discontinuous nonlocal conservation law}. 
\end{defi}
The stated results can naturally be extended, as outlined in the following \cref{rem:generalization}.
\begin{rem}[Generalizations -- Extensions]\label{rem:generalization}
For the sake of a type of ``completeness or generality'' of the developed theory, we mention that the results established in this work can be extended to general nonlocal terms and explicitly space- and time-dependent velocity functions, as well as balance laws, i.e.,\ it is also possible to obtain the well-posedness of the more general \textbf{discontinuous nonlocal balance law} in \((t,x)\in\OT\)
\begin{align*}
     q_{t}(t,x)+ \partial_{x}\Big(v(x)\tilde{V}\big(t,x,\cW[q,\tilde{\gamma}](t,x)\big)q(t,x)\Big)&=h\big(t,x,q(t,x),\cW[q,\tilde{\gamma}](t,x)\big)\\
    q(0,x)&=q_{0}(x)\\
    \cW[q,\tilde{\gamma}]&\:\int_{\R}\tilde{\gamma}(t,x,y)q(t,y)\dd x,
\end{align*}
with \(\tilde{V}:[0,T]\times\R^{2}\rightarrow\R\) also Lipschitz in the explicit spatial variable, \(\tilde{\gamma}:[0,T]\times\R^{2}\rightarrow\R_{\geq0}\) Lipschitz in the second component and \(\sTV\) in the third component (compare \cite{pflug,spinola,bayen2020modeling}), and \(h:\OT\times\R^{2}\rightarrow\R\) Lipschitz in the third and fourth component and of corresponding regularity in \((t,x)\). We do not go into details here. For smooth kernels, it is even possible to extend results to measure-valued solutions (for measure-valued initial datum) similarly to \cite{crippa2013existence,gong2021weak}.
\end{rem}
For both problem classes \cref{defi:disc_ODE,defi:disc_conservation_law}, we will present proper definitions of solutions in \cref{defi:weak,defi:weak_solution} and demonstrate the existence and uniqueness in \cref{theo:surrogate,theo:existence_uniqueness_nonlocal_conservation_law}. 
It is worth underlining once more that in particular for the discontinuous nonlocal conservation law \textbf{no} Entropy condition is required to obtain uniqueness of weak solutions. This has already been proven in \cite{pflug} for Lipschitz-continuous velocities.

\section{Existence, uniqueness and stability of the discontinuous IVP}\label{sec:discontinuous_ODE}
In this section, we study the existence, uniqueness and stability (with regard to all input parameters and functions) of the discontinuous ODE introduced in \cref{defi:disc_ODE}. Let us first recall the assumptions on the involved datum in the following \cref{ass:input_datum}.
\begin{ass}[Involved datum]\label{ass:input_datum}
For a \(T\in\R_{>0}\) denoting the considered time horizon, we assume
\begin{description}
    \item[Discontinuous part:] \(v\in \sL^{\infty}\big(\R\sk \R_{\geq \underline v }\big)\) with \(\underline v \in \R_{>0}\),
    \item[Lipschitz-continuous part:] \(\lambda \in \sL^{\infty}\big((0,T)\sk \sW^{1,\infty}(\R)\big).\)
\end{description}
\end{ass}
For the  considered class of discontinuous initial value problems in \cref{defi:disc_ODE}, we must first define what we mean by a solution. This becomes clear when recalling that \(v\circ x\) is not necessarily measurable for \(x\in \sW^{1,\infty}((0,T))\) since \(x\) could be locally constant and, as a \(\sL^\infty\) function, \(v\)does not possess significantly ``good representatives'' with respect to the Lebesgue measure. However, due to the positive lower bound on $v$ and its time-independence, we can divide the strong form of solution by $v$ and by integration obtain the following integral definition of a solution:
\begin{defi}[Solutions for \cref{defi:disc_ODE}]\label{defi:weak}
For \(x_{0}\in\R\) and the data as in \cref{ass:input_datum}, a solution to the \textbf{discontinuous IVP} in \cref{defi:disc_ODE} is defined as a function \(x\in \sC([0,T])\) such that
    \begin{align}
        \int_{x_0}^{x(t)} \tfrac{1}{v(y)}\dd y &= \int_{0}^t \lambda(s,x(s)) \dd s, && \forall t\in [0,T]. \label{eqn:weak}
    \end{align}
    A solution is denoted by \(\cX[v,\lambda](x_0\sk \cdot)\), with \(x_{0}\) indicating the considered \textbf{initial datum} at time \(t=0\), \(v\) the \textbf{discontinuous} part of the velocity and \(\lambda\) the \textbf{Lipschitz-continuous} part.
\end{defi}
\begin{rem}[Reasonability of \cref{defi:weak}]
The definition of solutions in \cref{eqn:weak} is more usable then the ``classical'' Carath\'eodory introduced later. It enables the existence -- and later also stability properties -- to be tackled without prescribing additional regularity assumptions on \(x\) (such as measurability of \(v(x(\cdot))\lambda(\cdot,x(\cdot))\)). Compare in particular with
\cref{defi:cara}.

The introduced notation \(\cX[v,\lambda](x_0\sk \cdot)\) is later justified in \cref{subsec:existence_uniqueness}, where we prove existence and uniqueness of solutions.
\end{rem}

\subsection{Existence/Uniqueness of solutions and their relation to ``classical'' Carath\'eodory and Filippov solutions}\label{subsec:existence_uniqueness}
In the following \cref{theo:surrogate}, we prove the existence and uniqueness of solutions by decomposing the problem into two problems that possess "nicer" properties and can be studied separately:
\begin{theo}[Existence and uniqueness of solution in \cref{defi:weak}]\label{theo:surrogate}
Let \(T\in\R_{>0}\) be given and \cref{ass:input_datum} hold. Then, in the sense of \cref{defi:weak} there exists a unique solution \[\cX[v,\lambda](x_{0};\cdot)\in\sW^{1,\infty}((0,T)).\] 
In addition, defining the following surrogate expression
\begin{align}
     \cZ[v](x_0\sk \ast) &\equivd \int_{x_0}^{\ast}\tfrac{1}{v(s)}\dd s && \text{on } \R,\label{eq:surrogate_system1}
\end{align}
it holds that \(\R\ni x\mapsto \cZ[v](x_{0}\sk x)\) is invertible \(\forall (x_{0},v)\in \R\times\sL^{\infty}\big(\R\sk \R_{\geq \uv}\big)\) and for the inverse we write \(\cZ[v]^{-1}(x_{0}\sk\cdot):\R\rightarrow\R\).

Finally, calling $\cC[\lambda,\cZ[v](x_0\sk \cdot)]$ the solution $c$ of the integral equation
\begin{align}
   c(t) = \int_0^t \lambda\big(s,\cZ[v]^{-1}(x_0\sk c(s))\big) \dd s,   && \forall t \in [0,T],\label{eq:surrogate_system2}
\end{align}
the identity ``$\cX \equiv \cZ^{-1} \circ \cC$'' holds -- in full notation --
\begin{equation}
\cX[v,\lambda](x_0\sk\cdot) \equiv \cZ[v]^{-1}\big(x_0\sk \cC[\lambda,\cZ[v](x_0\sk \ast)](\cdot)\big) \qquad \text{ on } [0,T].\label{eq:identity_surrogate_system}
\end{equation}
\end{theo}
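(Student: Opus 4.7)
The plan is to decouple the discontinuous part $v$ from the Lipschitz part $\lambda$ by the change of variables $c(t) \: \cZ[v](x_0\sk x(t))$. Under this substitution \cref{eqn:weak} collapses to a Carath\'eodory-type integral equation for $c$ whose integrand is Lipschitz in the spatial slot, so that a standard Picard--Lindel\"of argument applies; the original solution is then recovered via $x(\cdot) \equiv \cZ[v]^{-1}(x_0\sk c(\cdot))$, which is exactly the identity \cref{eq:identity_surrogate_system}.

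First I would establish the bi-Lipschitz invertibility of $\cZ[v](x_0\sk\cdot)$. Since $\underline v \leq v \leq \|v\|_{\sL^\infty(\R)}$ a.e., the integrand $1/v$ lies in $[\|v\|_{\sL^\infty(\R)}^{-1},\underline v^{-1}]$ a.e., so that
\[
\tfrac{1}{\|v\|_{\sL^\infty(\R)}}|x-y| \leq \bigl|\cZ[v](x_0\sk x)-\cZ[v](x_0\sk y)\bigr| \leq \tfrac{1}{\underline v}|x-y|
\]
for every $x,y\in\R$. Hence $\cZ[v](x_0\sk\cdot)\colon\R\to\R$ is a strictly increasing bi-Lipschitz bijection, and its inverse $\cZ[v]^{-1}(x_0\sk\cdot)$ is Lipschitz with constant $\|v\|_{\sL^\infty(\R)}$.

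Second, I would study the surrogate fixed-point problem. Define
\[
\Phi(c)(t) \: \int_0^t \lambda\bigl(s,\cZ[v]^{-1}(x_0\sk c(s))\bigr)\dd s, \qquad t\in[0,T],
\]
on $\sC([0,T])$. The assumption $\lambda\in\sL^\infty((0,T)\sk\sW^{1,\infty}(\R))$ supplies a Carath\'eodory representative (measurable in $t$, uniformly Lipschitz in $x$ with constant $L_\lambda \leq \|\lambda\|_{\sL^\infty((0,T)\sk\sW^{1,\infty}(\R))}$), and $\cZ[v]^{-1}(x_0\sk\cdot)$ is continuous, so for every $c\in\sC([0,T])$ the integrand is measurable and pointwise bounded by $M\:\|\lambda\|_{\sL^\infty((0,T)\sk\sW^{1,\infty}(\R))}$. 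Hence $\Phi$ maps $\sC([0,T])$ into $\sW^{1,\infty}((0,T))$ with $|\Phi(c)'(t)|\leq M$ a.e., and for any $c_1,c_2$,
\[
|\Phi(c_1)(t)-\Phi(c_2)(t)| \leq L_\lambda\,\|v\|_{\sL^\infty(\R)}\int_0^t |c_1(s)-c_2(s)|\dd s.
\]
A Bielecki-weighted norm (or equivalently iteration on subintervals of length $<(L_\lambda\|v\|_{\sL^\infty(\R)})^{-1}$ and concatenation) yields a unique fixed point $c=\cC[\lambda,\cZ[v](x_0\sk\ast)]$ in $\sW^{1,\infty}((0,T))$, and Gronwall applied to the displayed inequality directly gives uniqueness.

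Finally, I would unwind the substitution to establish existence, uniqueness, and the identity simultaneously. Setting $x(t)\: \cZ[v]^{-1}(x_0\sk c(t))$, a direct computation gives $\cZ[v](x_0\sk x(t)) = c(t) = \int_0^t \lambda(s,x(s))\dd s$, i.e.\ \cref{eqn:weak}; moreover $x\in\sW^{1,\infty}((0,T))$ since $c\in\sW^{1,\infty}$ is composed with the Lipschitz map $\cZ[v]^{-1}(x_0\sk\cdot)$. Conversely, any solution $x$ of \cref{eqn:weak} produces $\hat c\: \cZ[v](x_0\sk x)$ which satisfies $\Phi(\hat c)=\hat c$, so uniqueness for $c$ transfers to $x$ through the bijection $\cZ[v](x_0\sk\cdot)$. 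The main subtlety, rather than a genuine obstacle, is the measurability of $s\mapsto\lambda(s,\cZ[v]^{-1}(x_0\sk c(s)))$ for a merely continuous $c$; this reduces to selecting a Carath\'eodory representative of $\lambda$ (available because $\sW^{1,\infty}(\R)\hookrightarrow\sC(\R)$) and invoking the standard fact that the composition of a Carath\'eodory function with a continuous curve is measurable, after which all quantitative estimates above are routine.
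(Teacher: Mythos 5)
Your proposal is correct and follows essentially the same route as the paper: the same bi-Lipschitz analysis of $\cZ[v](x_0\sk\cdot)$, the same surrogate Carath\'eodory/Picard argument producing the unique $\cC[\lambda,\cZ[v](x_0\sk\cdot)]$, and recovery of $\cX$ via the composition $\cZ^{-1}\circ\cC$ as in \cref{eq:identity_surrogate_system}. The only minor deviation is that the paper proves uniqueness by a direct Gr\"onwall estimate on two solutions of \cref{defi:weak}, whereas you transfer uniqueness of the fixed point $c$ through the bijection $\cZ[v](x_0\sk\cdot)$; the two arguments are equivalent, and your explicit handling of the measurability of $s\mapsto\lambda(s,\cZ[v]^{-1}(x_0\sk c(s)))$ is, if anything, slightly more careful than the paper's appeal to standard Carath\'eodory theory.
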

\begin{proof}

\Cref{eq:surrogate_system1} is well defined and by construction $\cZ[v](x_{0}\sk \cdot)\in \sW^{1,\infty}_{\loc}(\R)$ so that
\begin{equation}
\tfrac{1}{\|v\|_{\sL^{\infty}(\R)}}\leq \partial_{x}\cZ[v](x_{0}\sk x)\leq \tfrac{1}{\underline{v}}\qquad \forall x\in\R\label{eq:lower_upper_bound_cZ}
\end{equation}
and thus \(\partial_{3}\cZ[v](x_{0}\sk\cdot)\in \sL^{\infty}(\R)\).
Additionally, as \(x\mapsto \cZ[v](x_{0}\sk x)\) is strictly monotone, the inverse mapping $\cZ[v]^{-1}(x_{0}\sk \cdot)$ is well defined and, thanks to \cref{eq:lower_upper_bound_cZ},
\[
\cZ[v]^{-1}(x_{0}\sk \cdot) \in \sW^{1,\infty}_{\loc}(\R):\quad \partial_{3}\cZ[v]^{-1}(x_{0}\sk \cdot)\in \sL^{\infty}(\R).
\]
Next, considering the definition of \(\cC[\lambda,\cZ[v](x_{0}\sk \cdot)]\) in \cref{eq:surrogate_system2} and the fact that \(\lambda \in \sL^{\infty}\big((0,T)\sk \sW^{1,\infty}(\R)\big)\), the composition \(x\mapsto \lambda\big(s,\cZ[v]^{-1}(x_{0}\sk x)\big)\), is -- thanks to the previous estimates -- globally Lipschitz-continuous for each \((s,x_{0})\in (0,T)\times\R\) and thus, there exists a unique Carath\'eodory solution (see for instance \cite{coddington}) \(\cC[\lambda,\cZ[v](x_{0}\sk \cdot)]\in\sW^{1,\infty}((0,T))\).

We now need to check whether the \(\cX[v,\lambda](x_{0}\sk \cdot)\in \sW^{1,\infty}((0,T))\) as in \cref{eq:identity_surrogate_system} indeed satisfies \cref{defi:weak}.
We have  by the very definition of \(\cX[v,\lambda](x_{0}\sk \cdot)\) \(\forall t\in[0,T]\)
\begin{align*}
    \cC[\lambda,\cZ[v](x_{0}\sk \cdot)](t)=\int_{0}^{t}\lambda\big(s,\cZ[v]^{-1}(x_{0}\sk c(s)) \big)\dd s=\int_{0}^{t}\lambda\big(s,\cX[v,\lambda](x_{0}\sk s)\big)\dd s
\end{align*}
and, as \(\cZ[v](x_{0}\sk \cdot) \circ \cX[v,\lambda](x_{0}\sk\cdot) \equiv \cC[\lambda,\cZ[v](x_{0}\sk \cdot)]\), by assumption
\[
\cZ[v]\big(x_{0}\sk \cX[v,\lambda](x_{0};t)\big)=\int_{x_{0}}^{\cX[v,\lambda](x_{0}\sk t)}\tfrac{1}{v(s)}\dd s,
\]
which is the definition of a solution in \cref{defi:weak}. This demonstrates the existence of solutions.
For the uniqueness, assume that we have two solutions \(\cX,\tilde{\cX}\in \sC([0,T])\) satisfying \cref{defi:weak}. Then, the difference satisfies
\begin{align*}
\int_{\tilde{\cX}(t)}^{\cX(t)}\tfrac{1}{v(z)}\dd z&=\int_{0}^{t}\lambda\big(s,\cX(s)\big)-\lambda\big(s,\tilde{\cX}(s)\big)\dd s\quad \forall t\in[0,T]
\end{align*}
and, as \(\lambda\in \sL^{\infty}\big((0,T)\sk \sW^{1,\infty}(\R)\big)\), we obtain
\begin{align*}
\tfrac{\vert \cX(t) - \tilde{\cX}(t)\vert }{\|v\|_{\sL^{\infty}((0,T))}}
\leq \bigg\vert \int_{\tilde{\cX}(t)}^{\cX(t)}\tfrac{1}{v(z)}\dd z\bigg\vert &=\bigg\vert \int_{0}^{t}\lambda\big(s,\cX(s)\big)-\lambda\big(s,\tilde{\cX}(s)\big)\dd s \bigg\vert  \\
&\leq \|\partial_2 \lambda\|_{\sL^\infty((0,T)\sk \sL^\infty(\R))} \int_0^t \vert \cX(s) - \tilde{\cX}(s)\vert  \dd s.
\end{align*}
Applying Gr\"onwall's inequality \cite[Chapter I, III Gronwall's inequality]{walter} yields
\[
\vert x(s)-\tilde{x}(s)\vert =0,\quad \forall s\in[0,t],
\]
thus the two solutions must be identical. This concludes the proof.
\end{proof}
In the following, we show that the unique solution of the discontinuous IVP in \cref{defi:disc_ODE} in the sense of \cref{defi:weak} is also a ``classical'' Carath\'eodory solution in the following sense:
\begin{defi}[Carath\'eodory solutions for \cref{defi:disc_ODE}]\label{defi:cara}
Let \cref{ass:input_datum} hold. Then, for the initial datum \(x_{0}\in\R\) we call a function \(\cX\in \sC([0,T])\) a \textbf{Carath\'eodory solution} for \cref{defi:disc_ODE} iff $t\mapsto v\big(\cX(t)\big)\lambda\big(t,\cX(t)\big)$ is Lebesque measurable and
\begin{align}
    \cX(t)&=x_{0}+\int_{0}^t v\big(\cX(s)\big)\lambda\big(s,\cX(s)\big)\dd s, \qquad \forall t \in [0,T].\label{eq:cara_ODE}
    \end{align}
\end{defi}
Notice the difference to the usual definition of a Carath\'eodory solution, where the measurability of the integrand is given by construction (either by being continuous or having a right hand side which is strictly bounded away from zero so that solutions are strictly monotone).

\begin{lem}[Equivalence Carath\'eodory solution and solutions as in \cref{defi:weak}]\label{theo:equivalence_cara_weak}
There exists a unique Carath\'eodory solution as in \cref{defi:cara} iff there exists a unique  solution as in \cref{defi:weak}.
\end{lem}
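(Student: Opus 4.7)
My plan is to establish the stronger statement that the set of Carath\'eodory solutions in the sense of \cref{defi:cara} coincides, as a subset of $\sC([0,T])$, with the set of solutions in the sense of \cref{defi:weak}. The claimed equivalence of existence and uniqueness drops out immediately, since two solution concepts with identical solution sets have the same cardinality.

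For the direction \cref{defi:weak} $\Rightarrow$ \cref{defi:cara}, I would proceed as follows. Let $x\in\sC([0,T])$ satisfy \cref{eqn:weak}. Since $\lambda\in\sL^{\infty}\big((0,T)\sk\sW^{1,\infty}(\R)\big)$ the right-hand side of \cref{eqn:weak} is globally Lipschitz in $t$, and $\cZ[v](x_{0}\sk\cdot)$ is bi-Lipschitz by \cref{eq:lower_upper_bound_cZ}. Composing with $\cZ[v]^{-1}(x_{0}\sk\cdot)$ therefore shows $x\in\sW^{1,\infty}((0,T))$. Differentiating \cref{eqn:weak} via the chain rule for the Lipschitz composition $\cZ[v](x_{0}\sk x(\cdot))$ would yield
\begin{equation*}
\tfrac{1}{v(x(t))}\,x'(t)=\lambda(t,x(t))\qquad\text{for a.e. }t\in(0,T),
\end{equation*}
where the product is interpreted as $0$ wherever $x'(t)=0$. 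Multiplying by $v(x(t))$ gives $x'(t)=v(x(t))\lambda(t,x(t))$ a.e., and measurability of this right-hand side follows because it agrees a.e.\ with the measurable derivative $x'$. Integration from $0$ to $t$ then produces \cref{eq:cara_ODE}.

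The converse \cref{defi:cara} $\Rightarrow$ \cref{defi:weak} is symmetric. A Carath\'eodory solution $x$ satisfies $x'\in\sL^{\infty}((0,T))$ by the bounds $\underline v\leq v\leq\|v\|_{\sL^{\infty}(\R)}$ and $\|\lambda\|_{\sL^{\infty}((0,T)\sk\sL^{\infty}(\R))}$, so $x$ is Lipschitz. The same chain rule applied to $t\mapsto\cZ[v](x_{0}\sk x(t))$ then gives $\tfrac{\mathrm d}{\mathrm dt}\cZ[v](x_{0}\sk x(t))=\tfrac{1}{v(x(t))}\,v(x(t))\lambda(t,x(t))=\lambda(t,x(t))$ a.e., and integrating together with $\cZ[v](x_{0}\sk x(0))=0$ recovers \cref{eqn:weak}.

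The one genuinely delicate point, and the step I expect to require the most care, is the chain rule applied to $\cZ[v](x_{0}\sk x(\cdot))$, since the outer Lipschitz function has its classical derivative $1/v$ specified only up to a Lebesgue null set $N\subset\R$. I would handle this by invoking Lusin's $N$-property of the Lipschitz path $x$: the substitution formula for absolutely continuous maps gives $\int_{x^{-1}(N)}|x'(t)|\dd t=\int_{N}\#\{x^{-1}(y)\}\dd y=0$, so $x'=0$ a.e.\ on $x^{-1}(N)$, while on the complementary set $\{x'=0\}$ the composition itself is differentiable with derivative $0$ thanks to the Lipschitz bound $|\cZ[v](x_{0}\sk x(t+h))-\cZ[v](x_{0}\sk x(t))|\leq\underline v^{-1}|x(t+h)-x(t)|=o(h)$. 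With the two solution sets thus identified, the iff statement follows at once.
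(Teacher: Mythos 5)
Your proposal is correct, but it takes a genuinely different route from the paper. The paper's proof never differentiates the composition $t\mapsto\cZ[v](x_0\sk x(t))$ pointwise: instead it recycles the surrogate system of \cref{theo:surrogate}, first establishing the integral identity $\cZ[v]^{-1}(x_{0}\sk s)=x_{0}+\int_{0}^{s}v\big(\cZ[v]^{-1}(x_{0}\sk u)\big)\dd u$ and then applying $\cZ[v]^{-1}$ to \cref{eqn:weak} and performing the substitution $u=\cC(\tau)$ to land directly on \cref{eq:cara_ODE}, with the reverse direction obtained by observing that all manipulations are reversible; in particular the measurability of $t\mapsto v(\cX(t))\lambda(t,\cX(t))$ is absorbed into the change of variables rather than addressed head-on. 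You instead prove equality of the two solution sets by an a.e.\ chain-rule argument for the Lipschitz composition, and the delicate point you isolate is exactly the right one: the a.e.-defined derivative $1/v$ of the bi-Lipschitz map $\cZ[v](x_0\sk\cdot)$ is handled via the Banach-indicatrix/Lusin-$N$ argument ($x'=0$ a.e.\ on the preimage of the exceptional null set, where the composition then has derivative $0$ by the Lipschitz bound), and measurability of the Carath\'eodory right-hand side follows from a.e.\ coincidence with $x'$ and completeness of the Lebesgue measure. What your route buys is a self-contained argument that applies to an arbitrary element of either solution set without invoking the constructed solution $\cZ^{-1}\circ\cC$ and its uniqueness, and it makes the measurability issue explicit rather than implicit; what the paper's route buys is that it avoids the null-set chain-rule technicalities altogether by staying at the level of integral identities already prepared in \cref{theo:surrogate}. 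Two cosmetic points: in your converse direction the cancellation $\tfrac{1}{v(x(t))}v(x(t))=1$ is only literal off the exceptional set, and on it you should add the one-line remark that $x'(t)=0=v(x(t))\lambda(t,x(t))$ together with $v\geq\uv>0$ forces $\lambda(t,x(t))=0$, so the composition's derivative still equals $\lambda(t,x(t))$ there; likewise, fixing once and for all a representative of $v$ with values in $\R_{\geq\uv}$ makes the statement ``$x'(t)=v(x(t))\lambda(t,x(t))$ a.e.'' unambiguous. Neither point affects the validity of your argument.
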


\begin{proof}
We start by showing that solutions in the sense of \cref{defi:weak} are also Carath\'eodory solutions as in \cref{defi:cara}.
Recalling the steps of the proof in \cref{theo:surrogate}, we find that
\begin{align}
        \cZ[v]^{-1}(x_{0}\sk s) &= x_0 + \int_{0}^s v\Big(\cZ[v]^{-1}(x_{0}\sk u)\Big) \dd u\quad \forall s\in\R.\label{eq:cZ_inverse}
\end{align}
This identity is a direct consequence of the following manipulation for \(s\in\R\) (for the sake of briefer notation we write \(\cZ,\cZ^{-1}\) and suppress the dependencies on \(v\) and \(x_{0}\)):
\begin{align}
    \cZ^{-1}(\cZ(s)) &= x_0 + \int_{0}^{\int_{x_0}^{s}\frac{1}{v(x)}\dd x} v\big(\cZ^{-1}(u)\big) \dd u.
    \intertext{Substituting \(\cZ^{-1}(u)=w\) (which is possible according to \cref{eq:lower_upper_bound_cZ})},
    &= x_0 + \int_{x_0}^{s} v(w)\cZ'(w) \dd v  = x_0 + \int_{x_0}^{s} \dd v = s.
\end{align}
As this holds for all \(s\in[0,T]\), we have proven that \(\cZ[v]^{-1}\) indeed satisfies \cref{eq:cZ_inverse}.

To show that \(\cX\), which is constructed via \cref{theo:surrogate}, satisfies \cref{eq:cara_ODE}, we next apply $\cZ^{-1}$ to \cref{eqn:weak} and obtain
\begin{align}
   \cZ^{-1}\bigg(\int_0^t \lambda\big(s,\cX(s)\big) \dd s\bigg) &= x_0 + \int_0^{\int_0^t \lambda\big(s,\cX(s)\big) \dd s} v\big(\cZ^{-1}(u)\big) \dd u\\
   &= x_0 + \int_{\cC(0)}^{\cC(t)} v\big(\cZ^{-1}(u)\big) \dd u.
    \intertext{Substituting $u = \cC(\tau)$ for \(\tau\in\R\) chosen accordingly},
     &= x_0 + \int_0^{t} v\big(\cZ^{-1}(\cC(\tau))\big)\cC'(\tau) \dd \tau \\
     &= x_0  + \int_0^{t} v(\cX(\tau))\lambda(\tau,\cX(\tau)) \dd\tau. 
\end{align}
As the left hand side of \cref{eqn:weak} is given by $\cZ \circ \cX$ in terms of $\cZ$ (as defined in \cref{eq:surrogate_system1}), by applying $\cZ^{-1}$ we obtain
\begin{align}
    \cZ^{-1}\big(\cZ(\cX(t))\big) = \cX(t),\quad  \forall t\in[0,T].
\end{align}
Thus, if $\cX$ is a solution in the sense of \cref{defi:weak}, it is also a Carath\'eodory solution. For showing the equivalence we mention that all of the previous manipulations are equivalent transforms, and we can start with the identity \cref{eq:cara_ODE} and go backwards in the presented proof.
\end{proof}
Finally, we want to close the gap to Filippov solutions \cite{filippov1960differential,filippov1988}. We first define what we mean by Filippov solutions, sticking with \cite[2: Definition of the solution]{filippov1988}:
\begin{defi}[Filippov solution for a differential equation]\label{defi:Filippov}
We call a function \(\cX\in\sW^{1,1}((0,T))\) for \(T\in\R_{>0}\) a \textbf{Filippov solution} of the \textbf{discontinuous initial value problem} in \cref{defi:disc_ODE}
iff
\begin{align*}
\cX(0)&=x_{0}\\
\cX'(t)&\in K[f(t,\cdot)](\cX(t)),\qquad \forall t\in[0,T]\text{ a.e.}
\end{align*}
with \(f\equiv v\cdot \lambda\), \(K\) being defined as
\begin{align}
    K[f(t,\cdot)] \: \bigcap_{\delta\in\R_{>0}}\bigcap_{\substack{N\subset\R:\\ \mu(N)=0}}\operatorname{conv}f\Big(t,B_{\delta}(\cX(t))\setminus N\Big),\label{eq:defi_K}
\end{align}
where \(\mu\) denotes the Lebesgue measure on \(\R\).
\end{defi}
Given this definition, we have the existence of Filippov solutions according to the following
\begin{theo}[Existence of solutions]
Given \cref{ass:input_datum}, there exists a Filippov solution as in \cref{defi:Filippov}.
\end{theo}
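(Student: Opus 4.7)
The plan is to take the unique solution \(\cX\equiv\cX[v,\lambda](x_{0}\sk\cdot)\in\sW^{1,\infty}((0,T))\) produced by \cref{theo:surrogate} and verify that it also satisfies the Filippov differential inclusion. The regularity requirement \(\cX\in\sW^{1,1}((0,T))\) is immediate since \(\sW^{1,\infty}((0,T))\subset\sW^{1,1}((0,T))\), and \(\cX(0)=x_{0}\) holds by construction. By \cref{theo:equivalence_cara_weak} the function \(\cX\) is a Carath\'eodory solution, so
\[
\cX'(t)=v(\cX(t))\lambda(t,\cX(t)) \qquad \text{for a.e.\ }t\in[0,T],
\]
and what remains to be checked is the set-valued inclusion \(\cX'(t)\in K[f(t,\cdot)](\cX(t))\) with \(f\equiv v\lambda\).

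I would proceed by splitting according to whether \(\lambda(t,\cX(t))\) vanishes. On the set \(\{t:\lambda(t,\cX(t))=0\}\) one has \(\cX'(t)=0\); Lipschitz continuity of \(\lambda(t,\cdot)\) together with the \(\sL^{\infty}\)-bound on \(v\) yields \(|f(t,y)|\leq \|\partial_{2}\lambda\|_{\sL^{\infty}}\|v\|_{\sL^{\infty}}|y-\cX(t)|\), so selecting any sequence \(y_{n}\in B_{\delta}(\cX(t))\setminus N\) with \(y_{n}\to \cX(t)\) (possible for every null \(N\) and every \(\delta>0\)) exhibits \(0\) as a limit point of \(f(t,B_{\delta}(\cX(t))\setminus N)\); intersecting over \(\delta\) and \(N\) gives \(0\in K[f(t,\cdot)](\cX(t))\). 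On the complementary set \(\{t:\lambda(t,\cX(t))\neq 0\}\) the uniform lower bound \(v\geq\underline v\) yields \(|\cX'(t)|\geq\underline v\,|\lambda(t,\cX(t))|>0\), and I would invoke the area formula for Lipschitz maps: for every Borel null \(A\subset\R\) and every \(\varepsilon>0\),
\[
\int_{\{|\cX'|>\varepsilon\}\cap\cX^{-1}(A)}|\cX'(t)|\dd t=\int_{A}\#\big(\cX^{-1}(y)\cap\{|\cX'|>\varepsilon\}\big)\dd y=0,
\]
so \(|\cX^{-1}(A)\cap\{|\cX'|>\varepsilon\}|=0\); letting \(\varepsilon\downarrow 0\) and choosing \(A\) to be the set of non-approximate-continuity points of \(v\) shows that \(\cX(t)\) is an approximate continuity point of \(v\) for a.e.\ such \(t\). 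For the precise representative \(v^{*}\) of \(v\) one then has \(v^{*}(\cX(t))\in K[v](\cX(t))\) and the Carath\'eodory identity, read with this representative, becomes \(\cX'(t)=v^{*}(\cX(t))\lambda(t,\cX(t))\); continuity of \(\lambda(t,\cdot)\) at \(\cX(t)\) finally upgrades this pointwise value to \(\cX'(t)\in\lambda(t,\cX(t))K[v](\cX(t))=K[f(t,\cdot)](\cX(t))\), which is what we wanted.

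The main obstacle is reconciling the representative-dependent Carath\'eodory identity with the representative-invariant Filippov hull defined in \cref{eq:defi_K}; the area formula, available here precisely because of the strict positivity \(v\geq\underline v\), is what permits us to pass to the precise representative of \(v\) on the set where the characteristic actually moves, and to absorb the ambiguity on the stationary set into the vanishing factor \(\lambda(t,\cX(t))\). Once that reduction is in place, everything reduces to the defining structure of \(K[\cdot]\) together with the spatial continuity of \(\lambda\).
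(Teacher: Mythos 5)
Your proof is correct, but it takes a genuinely different route from the paper. The paper's proof of this theorem is a one-line appeal to Filippov's general existence theorem \cite[Theorem 4]{filippov1988}: since \(f\equiv v\cdot\lambda\) is measurable and essentially bounded, Condition B holds and solutions stay bounded on finite horizons, so a Filippov solution exists. You instead prove existence constructively, by verifying that the solution \(\cX[v,\lambda](x_0\sk\cdot)\) already obtained in \cref{theo:surrogate} satisfies the Filippov inclusion -- which is precisely the content of the paper's subsequent \cref{lem:relation_Filippov}, and in fact your argument is more careful than the proof given there: the paper passes to the Lebesgue-average representative \(\tilde f\) and asserts \(\cX'(t)\in K[\tilde f(t,\cdot)](\cX(t))\) without addressing why the Carath\'eodory identity survives the change of representative along the curve, whereas your split into \(\{\lambda(t,\cX(t))=0\}\) (where the Lipschitz bound \(\vert f(t,y)\vert\leq \|\partial_2\lambda\|_{\sL^\infty}\|v\|_{\sL^\infty}\vert y-\cX(t)\vert\) forces \(0\in K\)) and \(\{\lambda(t,\cX(t))\neq 0\}\) (where the Banach-indicatrix/area-formula argument, powered by \(v\geq\uv\), shows \(\cX(t)\) avoids any fixed null set for a.e.\ such \(t\), so the precise representative may be used) closes exactly that gap. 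What each approach buys: the paper's citation is shorter and needs nothing beyond boundedness of \(f\); yours is self-contained, exploits the structure \(f=v\lambda\), and simultaneously delivers the stronger statement that the specific solution of \cref{defi:weak} is Filippov. One small caveat common to you and the paper: as written, \cref{eq:defi_K} uses \(\operatorname{conv}\) without closure, while both your argument (limit points, \(v^*(\cX(t))\) as a limit of averages) and the paper's identification of the hull with \([\essinf,\esssup]\) require the \emph{closed} convex hull, i.e.\ the standard Filippov definition; it would be worth saying this explicitly.
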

\begin{proof}
This is a direct consequence of \cite[Theorem 4, Theorem]{filippov1988}, recalling that \(v\cdot\lambda\) is essentially measurable and essentially bounded and thus satisfies \textbf{Condition B} as well as the boundedness of any solutions on any finite time horizon.
\end{proof}
In the next \cref{lem:relation_Filippov}, we make a connection between the solutions in \cref{defi:weak} and general Filippov solutions.
\begin{lem}[Relation of \cref{defi:weak} to Filippov solutions]\label{lem:relation_Filippov}
Solutions in the sense of \cref{defi:weak} are Filippov solutions as defined in \cref{defi:Filippov}.
\end{lem}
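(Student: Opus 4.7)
The plan is to reduce the Filippov inclusion to a pointwise statement about the discontinuous velocity $v$ at Lebesgue points, and then transfer this from almost every $y \in \R$ to almost every $t \in [0,T]$ using the Lipschitz regularity of $\cX$.

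First, by \cref{theo:surrogate} and \cref{theo:equivalence_cara_weak}, any solution $\cX$ in the sense of \cref{defi:weak} lies in $\sW^{1,\infty}((0,T))$ and is a Carathéodory solution, so
\[
\cX'(t) = v(\cX(t))\lambda(t,\cX(t)) \qquad \text{for a.e.\ } t\in[0,T].
\]
Hence it suffices to establish $v(\cX(t))\lambda(t,\cX(t))\in K[v\lambda(t,\cdot)](\cX(t))$ almost everywhere. I would split the argument into two regimes according to whether $\lambda(t,\cX(t))$ vanishes or not.

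In the trivial regime, where $\lambda(t,\cX(t))=0$, the Lipschitz continuity of $\lambda(t,\cdot)$ gives $|\lambda(t,z)|\leq L_\lambda|z-\cX(t)|$ near $\cX(t)$. Combined with $\|v\|_{\sL^\infty(\R)}<\infty$, this forces $\overline{\operatorname{conv}}f(t,B_\delta(\cX(t))\setminus N)\subset[-C\delta,C\delta]$ independently of the null set $N$, so $K[v\lambda(t,\cdot)](\cX(t))=\{0\}$, which contains $\cX'(t)=0$. For the nontrivial regime, the key pointwise fact I would invoke is: for a.e.\ $y\in\R$, the value $v(y)$ belongs to $\overline{\operatorname{conv}}v(B_\delta(y)\setminus N)$ for every $\delta>0$ and every null set $N$. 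This is a consequence of the Lebesgue differentiation theorem applied to $v\in\sL^\infty(\R)$, together with the elementary observation that averages over $B_\delta(y)$ ignore any prescribed null set and lie in the closed convex hull of the essential range on $B_\delta(y)$. Call $E\subset\R$ the exceptional null set.

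The transfer step: using that $\cX$ is Lipschitz, the area/change-of-variables formula
\[
\int_{\cX^{-1}(E)} |\cX'(t)|\dd t = \int_E \#\{\cX^{-1}(y)\}\dd y = 0
\]
implies $|\cX'(t)|=0$ for a.e.\ $t\in\cX^{-1}(E)$. Since $v\geq\uv>0$, $|\cX'(t)|=0$ only on the set $\{t:\lambda(t,\cX(t))=0\}$ (already handled). Consequently, for a.e.\ $t$ with $\lambda(t,\cX(t))\neq 0$, the point $y\coloneqq\cX(t)$ lies outside $E$, so $v(y)\in\overline{\operatorname{conv}}v(B_\delta(y)\setminus N)$ for every admissible $(\delta,N)$. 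Multiplying an approximating convex combination $v(y)=\lim_n\sum_i\alpha_i^n v(z_i^n)$ with $z_i^n\in B_\delta(y)\setminus N$ by $\lambda(t,y)$ and swapping $\lambda(t,y)$ for $\lambda(t,z_i^n)$ incurs an error bounded by $\|v\|_{\sL^\infty(\R)}L_\lambda\delta$. Thus $v(y)\lambda(t,y)$ sits within $C\delta$ of $\overline{\operatorname{conv}}f(t,B_\delta(y)\setminus N)$ for every $\delta>0$; since this set is closed in $\R$ and nested, letting $\delta\to 0$ yields $v(y)\lambda(t,y)\in K[v\lambda(t,\cdot)](y)$. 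This proves the Filippov inclusion for a.e.\ $t$.

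The main obstacle I anticipate is the transfer step: $\cX$ need not be globally monotone (since $\lambda$ may change sign), and $v\circ\cX$ has no a priori meaning. The change-of-variables identity above, however, gives exactly the required ``N-property'' on the complement of the zero set of $\cX'$, which is precisely where one genuinely needs to evaluate $v$ at a specific point; on the zero set the discontinuity of $v$ becomes irrelevant because the Lipschitz continuity of $\lambda$ collapses $K[v\lambda(t,\cdot)](\cX(t))$ to a singleton.
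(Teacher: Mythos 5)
Your proposal is correct. It rests on the same core fact as the paper's proof -- at Lebesgue points the value of an $\sL^{\infty}$ function is a limit of local averages and hence lies in $\big[\essinf_{B_{\delta}}\,,\,\esssup_{B_{\delta}}\big]$ for every $\delta$, which is exactly the set appearing in \cref{eq:defi_K} -- but your execution differs in a way worth noting. The paper simply replaces $f=v\lambda$ by its precise representative (the limit of averages), invokes the interval identity and the fact that averages belong to it, and concludes the inclusion $\cX'(t)\in K[f(t,\cdot)](\cX(t))$ directly; it leaves implicit precisely the point you treat head-on, namely why the a.e.-in-space identification of the representative says anything about a.e.\ $t$ along the trajectory. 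Your decomposition handles this cleanly: on $\{t:\lambda(t,\cX(t))=0\}$ the spatial Lipschitz bound on $\lambda$ forces $K$ into $\bigcap_{\delta}[-C\delta,C\delta]$, while on the complement the Luzin-N/area-formula argument ($\int_{\cX^{-1}(E)}\vert\cX'\vert\dd t=0$ for null $E$, combined with $v\geq\uv>0$) guarantees that $\cX(t)$ is a.e.\ a Lebesgue point of $v$, and the $\|v\|_{\sL^{\infty}(\R)}L_{\lambda}\delta$-perturbation passes from $v$ to $v\lambda(t,\cdot)$. This buys a fully rigorous transfer step that the paper's shorter argument glosses over, at the cost of the case split and the area formula; using \cref{theo:equivalence_cara_weak} and \cref{theo:surrogate} as you do is legitimate since both precede the statement. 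Two small points: both your argument and the paper's really work with the \emph{closed} convex hull in \cref{eq:defi_K} (as in Filippov's own definition) -- with the literal unclosed hull even the degenerate case can fail at endpoints -- and in that degenerate case the assertion $0\in K$ deserves the one-line justification that the hulls are nested and closed, so they contain points arbitrarily close to $0$ and hence $0$ itself.
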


\begin{proof}
According to \cref{theo:surrogate}, the solution \(x \in \sC([0,T])\) to \cref{defi:weak} exists and it is both unique and Lipschitz. Thus, \(x\in \sW^{1,1}((0,T))\). 
As \cref{defi:weak} is invariant with regard to the choice of representative of \(f\) in the Lebesgue measure, we can choose \(\tilde f \) as follows (\(\eps \in \R_{>0}\)):
\begin{align*}
    \tilde f(t,\cdot) \equiv \lim_{\eps \rightarrow 0}\tfrac{1}{2\eps} \int_{\cdot - \eps}^{\cdot + \eps} f(t,y) \dd y
\end{align*}
for \(t\in [0,T]\) a.e..
According to the Lebesgue differentiation theorem \cite[3.21 The Lebesgue Differentiation Theorem]{folland}, we have \(f\equiv \tilde f\) a.e.. As it also holds that
\begin{align}
\bigcap_{\substack{N\subset\R:\\ \mu(N)=0}}\operatorname{conv}\tilde{f}\Big(t,B_{\eps}(x(t))\setminus N\Big) = \bigg[\essinf_{y \in B_\eps(x)} \tilde{f}(t,y) \, , \, \esssup_{y \in B_\eps(x)} \tilde{f}(t,y) \bigg],
\end{align}
we can estimate uniformly in \(\eps\in\R_{>0}\) for \(x\in\R\)
\begin{align}
 \tfrac{1}{2\eps} \int_{x-\eps}^{x+\eps} \tilde{f}(t,y)  \dd y \in \bigg[\essinf_{y \in B_\eps(x)} \tilde{f}(t,y) \, , \, \esssup_{y \in B_\eps(x)} \tilde{f}(t,y) \bigg] 
\end{align}
and obtain 
\begin{align*}
\cX'(t) \in K\big[\tilde{f}(t,\cdot)\big](\cX(t)) \qquad \forall t\in [0,T] \text{ a.e.},
\end{align*}
with \(K\) as in \cref{eq:defi_K}. This is the definition of a Filippov solution, concluding the proof.
\end{proof}

However, uniqueness results for Filippov solutions are only presented for right hand sides of specific structure (\cite[Theorem 10]{filippov1988} (too strong for our setup)) or for autonomous right hand sides \cite{fjordholm2018sharp} where the famous \textbf{Osgood condition} plays a crucial role \cite{osgood1898beweis}. We detail this in the following for our setup, but need to restrict ourselves to the fully autonomous case (as this is where \cite{fjordholm2018sharp} is applicable). We thus assume that the Lipschitz part of the velocity, i.e.\ \( \lambda\), does not explicitly depend on time.
\begin{theo}[Uniqueness of Filippov solutions for scalar autonomous discontinuous ODEs as presented in \cite{fjordholm2018sharp}]\label{theo:uniqueness_Filippov}
Let \cref{ass:input_datum} hold. Moreover, let \(\exists \tilde{\lambda}\in \sW^{1,\infty}(\R):\ \lambda(t,\cdot)\equiv \tilde{\lambda}\) on \(\R\) (i.e., the discontinuous IVP is autonomous) and let
\begin{align*}
    \Big\{ x \in \R \ : \ \tilde \lambda(x) \neq 0 \text{ and } v \text{ discontinuous at } x\Big\}
\end{align*}
have Lebesgue measure zero. Then, the Filippov solution to the discontinuous IVP in \cref{defi:disc_ODE} is unique.
\end{theo}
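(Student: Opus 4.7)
The plan is to invoke the uniqueness theorem for Filippov solutions of scalar autonomous discontinuous ODEs established in fjordholm2018sharp. Setting \(f \: v\cdot\tilde\lambda\), the IVP becomes the autonomous scalar equation \(x'(t) = f(x(t))\), \(x(0) = x_0\), and I would verify the structural hypotheses needed to apply that result.

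The first ingredient concerns the behaviour of \(f\) at its zeros. Because \(v \geq \uv > 0\), the zero set \(Z \: \{x \in \R : f(x) = 0\}\) coincides with \(\{x \in \R : \tilde\lambda(x) = 0\}\). For any \(x_0 \in Z\) and any \(x \in \R\), the Lipschitz continuity of \(\tilde\lambda\) yields
\[
|f(x) - f(x_0)| = v(x)\,|\tilde\lambda(x) - \tilde\lambda(x_0)| \leq \|v\|_{\sL^\infty(\R)}\,\|\tilde\lambda\|_{\sW^{1,\infty}(\R)}\,|x - x_0|,
\]
so \(f\) is Lipschitz at every point of \(Z\). Consequently the discontinuity set of \(f\) is contained in \(\{x\in\R : \tilde\lambda(x)\neq 0 \text{ and } v \text{ discontinuous at } x\}\), which is Lebesgue null by hypothesis. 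The same bound also gives \(|f(x)| \leq C|x - x_0|\) near each \(x_0 \in Z\), whence
\[
\int_{x_0}^{x_0+\delta}\tfrac{\dd x}{|f(x)|} = +\infty,
\]
which is the classical Osgood condition.

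With these two facts in hand—(i) \(f\) is continuous at every zero and satisfies Osgood there, and (ii) the remaining discontinuities of \(f\) form a Lebesgue-null set disjoint from \(Z\)—the hypotheses of the main uniqueness result of fjordholm2018sharp are satisfied, and uniqueness of the Filippov solution follows.

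The main obstacle I foresee is not in establishing (i) and (ii), which are short consequences of the Lipschitz regularity of \(\tilde\lambda\) and the lower bound \(v \geq \uv\), but in matching the precise formulation in fjordholm2018sharp. Depending on the exact statement one may need to re-derive the underlying dichotomy: a Filippov trajectory in \(\R\setminus Z\) moves at speed \(|f|\) bounded below by a positive constant on each compact piece away from \(Z\), so it traverses the measure-zero set of bad discontinuities in zero time and is elsewhere governed by a locally Lipschitz right-hand side; meanwhile any trajectory reaching \(Z\) becomes stationary by Osgood, ruling out branching at the zeros. Translating this dichotomy back to the set-valued formulation of \cref{defi:Filippov} closes the proof.
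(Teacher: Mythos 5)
Your proposal is correct and follows essentially the same route as the paper: both invoke the uniqueness criterion of \cite{fjordholm2018sharp} and verify its two hypotheses, using the assumed null set for the discontinuity condition and the linear bound \(|f(x)|\le \|v\|_{\sL^{\infty}(\R)}\|\tilde\lambda'\|_{\sL^{\infty}(\R)}\,|x-x_0|\) at zeros of \(\tilde\lambda\) for the Osgood condition. The only difference is presentational: the paper phrases the Osgood condition via the one-sided envelope \(g[v\cdot\tilde\lambda]\) from \cref{eq:defi_g} (and passes to a Borel representative of \(v\)), but your pointwise linear bound transfers verbatim to \(g\), so the "matching of formulations" you flag is exactly the short step the paper carries out.
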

\begin{proof}
We take advantage of the result in \cite{fjordholm2018sharp} and repeat what is stated there: Recall the definition of \(K\) in \cref{eq:defi_K} and assume that
\begin{itemize}
    \item the set \[\Big\{x\in\R:\ 0\notin K[v\cdot \tilde\lambda](x)  \text{ and } v\cdot \tilde \lambda \text{ discontinuous at } x\Big\}\subset\R\] has \textbf{Lebesgue measure} zero,
    \item for every \(x\in\R\) with \(0 \in K[v\cdot \tilde\lambda](x)\), the function 
    \begin{equation}
    g[v\cdot\tilde \lambda]:\begin{cases}
    \R&  \mapsto\R\\
    z&\mapsto \lim_{\delta\rightarrow 0}\esssup_{y\in B_{\delta}(x)}\big((v\cdot\tilde{\lambda})(y+z)\sgn(z)\big)^{+}
    \end{cases}\label{eq:defi_g}
    \end{equation}
    is an \textbf{Osgood} function, i.e.\ \(g[v\cdot\tilde \lambda]\) is non-negative, Borel measurable, and for a \(\delta\in\R_{>0}\) satisfies
    \begin{equation}
    \int_{-\tilde{\delta}}^{0}\tfrac{1}{g[v\cdot\tilde \lambda](u)}\dd u=\infty=\int_{0}^{\tilde{\delta}}\tfrac{1}{g[v\cdot\tilde \lambda](u)}\dd u \qquad \forall \tilde{\delta}\in(0,\delta].\label{eq:defi_Osgood}
    \end{equation}
    Then there exists a \textbf{unique} Filippov solution to the considered discontinuous IVP.
\end{itemize}
However, both conditions are satisfied as we will detail in the following:
\begin{itemize}
    \item The first point is satisfied by construction as \(f\equiv v\cdot\tilde{\lambda}\), \(\tilde{\lambda}\) is Lipschitz and \(v\geqq\uv\).
    \item For the second point, we first recall that the definition of solution does not vary with respect to the representative (here \(f\)) in the Lebesgue-measure. Instead of \(f\equiv v\cdot\tilde{\lambda}\), we can therefore choose a Borel measurable function \(\hat{v}\in\sL^{\infty}(\R;\R_{\uv})\), with
    \[
    v(x)=\hat{v}(x),\qquad x\in\R \text{ a.e}.
    \]
    (For this, use Lusin's theorem \cite[44. Lusin's Theorem, p.64]{folland} to approximate \(v\) by a continuous function up to a set of arbitrarily small Lebesgue measure).
    Then, the corresponding function \(g[\hat{v}\cdot\tilde{\lambda}]\) defined in \cref{eq:defi_g} is by construction non-negative and Borel measurable. We now prove the so-called Osgood condition in \cref{eq:defi_Osgood}. To accomplish this, we estimate as follows for \(\tilde{\delta}\in(0,\delta)\) and \(z\in\R\):
    \begin{align*}
    \esssup_{y\in B_{\delta}(x)}\big((v\cdot\tilde{\lambda})(y+z)\sgn(z)\big)^{+}&\leq \|\hat{v}\|_{\sL^{\infty}(\R)}\esssup_{y\in B_{\delta}(x)}\bigg\vert\tilde{\lambda}(x)+\int_{x}^{y+z}\tilde{\lambda}'(s)\dd s\bigg\vert.
    \intertext{\(\lambda(x)=0\) as we are in the case \(0\in K[v\cdot\tilde{\lambda}(x)]\) and \(\hat{v}\geqq\uv\)}\\
    &\leq \|\hat{v}\|_{\sL^{\infty}(\R)}\vert z+\delta\vert\|\tilde{\lambda}'\|_{\sL^{\infty}(\R)}.
    \end{align*}
    Letting \(\delta\rightarrow 0\), for \(z\in\R\) we obtain
    \begin{align*}
        g[\hat{v}\cdot\tilde{\lambda}](z)\leq \|\hat{v}\|_{\sL^{\infty}(\R)}\|\tilde{\lambda}'\|_{\sL^{\infty}(\R)}\vert z\vert,
        \end{align*}
from which \cref{eq:defi_Osgood} follows. This concludes the proof.
\end{itemize}
\end{proof}
The previous \cref{theo:uniqueness_Filippov} has made a connection between our discontinuous IVP and Filippov theory, and has established the necessary uniqueness for the fully autonomous IVP. However, it does not directly apply to the general non-autonomous case. More importantly, although continuous dependency of the solution with regard to the input datum might be obtained, we require rather strong stability or continuity results, which can be obtained with our definition of solution \cref{defi:weak} by taking advantage of the surrogate system in \cref{theo:surrogate}. This is detailed in the next section.
\subsection{Stability of the solutions with respect to initial datum and velocities}\label{subsec:stability_initial_datum_velocities}
In this section, we deal with the stability of the discontinuous IVP introduced in \cref{defi:disc_ODE}. To this end, we use the surrogate system introduced in \cref{theo:surrogate} and study its components \cref{eq:surrogate_system1,eq:surrogate_system2} in detail.
\begin{prop}[Auxiliary stability results]\label{theo:existence_uniqueness_stability_surrogate}
Given \cref{ass:input_datum} and \(T\in\R_{>0}\), the surrogate ODEs defined in \cref{eq:surrogate_system1} and \cref{eq:surrogate_system2} are stable with respect to the initial datum, smooth velocity $\lambda$ and discontinuous velocity \(v\), i.e., the following stability results hold: For \(\cZ\)
\begin{equation}
\begin{split}
&\forall (u,x_{0},\tilde{x}_{0},v,\tilde{v})\in\R^{3}\times\sL^{\infty}\big(\R\sk \R_{\geq \uv}\big)^{2}:\\
     &\big\vert \cZ[v_{0}](x_0\sk u)-\cZ[\tilde{v}_{0}](\tilde{x}_{0}\sk u)\big\vert \\
     &\qquad\leq\tfrac{1}{\underline{v}} \vert x_0 - \tilde x_0\vert  + \tfrac{1}{\uv^2}\|v-\tilde v\|_{\sL^1((\min\{x_0,\tilde x_0,u\},\max\{x_0,\tilde x_0,u\}))}
\end{split}
     \label{eq:stability_z}
\end{equation}
and for \(\cC\) and \(\forall t\in[0,T]\)
\begin{equation}
\begin{split}    
&\forall (\lambda,\tilde{\lambda},x_{0},\tilde{x}_{0},v,\tilde{v})\in \sL^{\infty}\big((0,T)\sk \sW^{1,\infty}(\R)\big)^{2}\times\R^{2}\!\!\times\sL^{\infty}\big(\R\sk \R_{\geq \uv}\big)^{2}\\
&\big\vert \cC\big[\lambda,\cZ[v_{0}](x_0\sk \cdot)\big](t) - \cC\big[\tilde \lambda,\cZ[\tilde{v}_{0}](\tilde{x}_{0}\sk \cdot)\big](t)\big\vert \\
&\leq \e^{t\|v\|_{\sL^{\infty}(\R)}\mathcal L_2}\int_{0}^{t}\big\|\lambda(s,\cdot)-\tilde \lambda(s,\cdot)\big\|_{\sL^{\infty}(X(x_0,v,\lambda))}\dd s\\
&\qquad +\e^{t\|v\|_{\sL^{\infty}(\R)}\mathcal L_2}t\|\tilde{v}\|_{\sL^{\infty}(\R)}\mathcal L_2\|\cZ[v](x_0\sk \cdot)-\cZ[\tilde{v}](\tilde{x}_{0}\sk \cdot)\|_{\sL^{\infty}(X(x_0,v,\lambda))}
\end{split}
\label{eq:stability_c}
\end{equation}
   with
   \begin{equation}
   \begin{split}
   \mL&\:\max\big\{\|\lambda\|_{\sL^{\infty}((0,T);\sL^{\infty}(\R))},\|\tilde{\lambda}\|_{\sL^{\infty}((0,T);\sL^{\infty}(\R))}\big\}\\
   \mL_{2}&\:\max\big\{\|\partial_{2}\lambda\|_{\sL^{\infty}((0,T)\sk \sL^{\infty}(\R))},\|\partial_{2}\tilde{\lambda}\|_{\sL^{\infty}((0,T)\sk \sL^{\infty}(\R))}\big\} \\
   X(x_0,v,\lambda) &\: x_{0}+ T\|v\|_{\sL^\infty(\R)}\|\lambda\|_{\sL^{\infty}((0,T)\sk \sL^{\infty}(\R))}\cdot(-1,1) \subset \R.
   \end{split}
   \label{eq:X_x_0}
\end{equation}
\end{prop}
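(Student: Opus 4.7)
My plan is to treat the two estimates separately, starting with the simpler \cref{eq:stability_z} and then using it inside the Gr\"onwall argument for \cref{eq:stability_c}.

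\textbf{Bound for $\cZ$.} For \cref{eq:stability_z}, I would insert the intermediate quantity $\cZ[v](\tilde x_0;u)$ and apply the triangle inequality:
\begin{align*}
\cZ[v](x_0;u)-\cZ[\tilde v](\tilde x_0;u)
=\bigl[\cZ[v](x_0;u)-\cZ[v](\tilde x_0;u)\bigr]+\bigl[\cZ[v](\tilde x_0;u)-\cZ[\tilde v](\tilde x_0;u)\bigr].
\end{align*}
The first bracket equals $\int_{x_0}^{\tilde x_0}\tfrac{1}{v(s)}\dd s$, immediately bounded by $\tfrac{1}{\uv}|x_0-\tilde x_0|$ using $v\geq\uv$. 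The second bracket is $\int_{\tilde x_0}^{u}\tfrac{\tilde v(s)-v(s)}{v(s)\tilde v(s)}\dd s$, which I estimate by $\tfrac{1}{\uv^2}\|v-\tilde v\|_{\sL^1}$ on the oriented interval between $\tilde x_0$ and $u$, yielding the requested bound after enlarging the integration interval to $(\min\{x_0,\tilde x_0,u\},\max\{x_0,\tilde x_0,u\})$.

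\textbf{Bound for $\cC$.} Setting $c\:\cC[\lambda,\cZ[v](x_0;\cdot)]$, $\tilde c\:\cC[\tilde\lambda,\cZ[\tilde v](\tilde x_0;\cdot)]$ and writing $\cX(s)\:\cZ[v]^{-1}(x_0;c(s))$, $\tilde\cX(s)\:\cZ[\tilde v]^{-1}(\tilde x_0;\tilde c(s))$, the identity $\cX\equiv\cZ^{-1}\circ\cC$ from \cref{theo:surrogate} gives
\begin{align*}
c(t)-\tilde c(t)=\int_0^t\bigl[\lambda(s,\cX(s))-\tilde\lambda(s,\tilde\cX(s))\bigr]\dd s.
\end{align*}
I would split the integrand as $[\lambda(s,\cX(s))-\tilde\lambda(s,\cX(s))]+[\tilde\lambda(s,\cX(s))-\tilde\lambda(s,\tilde\cX(s))]$. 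Since $|\cX(s)-x_0|\leq T\|v\|_{\sL^\infty}\|\lambda\|_{\sL^\infty}$ by the Carath\'eodory representation of \cref{theo:equivalence_cara_weak}, $\cX(s)\in X(x_0,v,\lambda)$; hence the first term is dominated by $\|\lambda(s,\cdot)-\tilde\lambda(s,\cdot)\|_{\sL^\infty(X(x_0,v,\lambda))}$ and the second by $\mL_2|\cX(s)-\tilde\cX(s)|$.

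\textbf{Passing from $|\cX-\tilde\cX|$ to $|c-\tilde c|$.} This is the key step: I would use the Lipschitz continuity of $\cZ[v]^{-1}(x_0;\cdot)$ with constant $\|v\|_{\sL^\infty}$ (from \cref{eq:lower_upper_bound_cZ}) to write
\begin{align*}
|\cX(s)-\tilde\cX(s)|\leq\|v\|_{\sL^\infty}\bigl|c(s)-\cZ[v](x_0;\tilde\cX(s))\bigr|
\end{align*}
and then add/subtract $\tilde c(s)=\cZ[\tilde v](\tilde x_0;\tilde\cX(s))$ to obtain
\begin{align*}
|\cX(s)-\tilde\cX(s)|\leq\|v\|_{\sL^\infty}\bigl(|c(s)-\tilde c(s)|+\|\cZ[v](x_0;\cdot)-\cZ[\tilde v](\tilde x_0;\cdot)\|_{\sL^\infty(X(x_0,v,\lambda))}\bigr),
\end{align*}
after arguing that the evaluation points remain inside $X(x_0,v,\lambda)$ (by a bootstrap/smallness argument, or equivalently after an initial enlargement of this interval, which is the main obstacle here since $\tilde\cX$ a priori only lies near $\tilde x_0$). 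Substituting these pieces into the integral inequality for $|c(t)-\tilde c(t)|$ produces a linear Gr\"onwall inequality with constant $\|v\|_{\sL^\infty}\mL_2$ and inhomogeneities given exactly by the two terms on the right-hand side of \cref{eq:stability_c}. Gr\"onwall's lemma then yields the stated factor $\e^{t\|v\|_{\sL^\infty}\mL_2}$ and the proof concludes. The role swap $v\leftrightarrow\tilde v$ gives the analogous bound, showing the estimate is genuinely symmetric in the two data sets.
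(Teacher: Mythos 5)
Your treatment of \cref{eq:stability_z} is exactly the paper's computation and is fine, and your overall plan for \cref{eq:stability_c} (split the integrand, convert $|\cX-\tilde{\cX}|$ into $|c-\tilde c|$ plus a $\cZ$-difference via the Lipschitz bound \cref{eq:lower_upper_bound_cZ}, then Gr\"onwall) is also the paper's. The gap is precisely the step you flag as ``the main obstacle'', and it is a real one: after writing
$|\cX(s)-\tilde{\cX}(s)|\le\|v\|_{\sL^{\infty}(\R)}\big(|c(s)-\tilde c(s)|+|\cZ[\tilde v](\tilde x_{0};\tilde{\cX}(s))-\cZ[v](x_{0};\tilde{\cX}(s))|\big)$
you need the evaluation point $\tilde{\cX}(s)$ to lie in $X(x_{0},v,\lambda)$, but the Carath\'eodory bound only gives $\tilde{\cX}(s)\in X(\tilde x_{0},\tilde v,\tilde\lambda)$; if, say, $|x_{0}-\tilde x_{0}|>2T\|v\|_{\sL^{\infty}(\R)}\|\lambda\|_{\sL^{\infty}((0,T);\sL^{\infty}(\R))}$, then already $\tilde{\cX}(0)=\tilde x_{0}\notin X(x_{0},v,\lambda)$. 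This is not a smallness-in-$t$ issue, so no bootstrap can repair it, and enlarging the interval changes the statement: the norm in \cref{eq:stability_c} is taken over the set $X(x_{0},v,\lambda)$ built from the un-tilded data only, and that is the form used later in the paper.

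The repair is to pair the terms the other way round, which is in essence what the paper does. Since $c(s)=\cZ[v](x_{0};\cX(s))$ and $\tilde c(s)=\cZ[\tilde v](\tilde x_{0};\tilde{\cX}(s))$, use instead the Lipschitz bound for $\cZ[\tilde v]^{-1}(\tilde x_{0};\cdot)$ (constant $\|\tilde v\|_{\sL^{\infty}(\R)}$) and interpose $\cZ[\tilde v](\tilde x_{0};\cX(s))$, giving
$|\cX(s)-\tilde{\cX}(s)|\le\|\tilde v\|_{\sL^{\infty}(\R)}\big(|c(s)-\tilde c(s)|+|\cZ[\tilde v](\tilde x_{0};\cX(s))-\cZ[v](x_{0};\cX(s))|\big)$;
now the evaluation point is $\cX(s)\in X(x_{0},v,\lambda)$ and Gr\"onwall closes the argument with the constants of \cref{eq:stability_c} (up to the $\|v\|$ versus $\|\tilde v\|$ relabeling in the exponent, a discrepancy the paper's own proof shares). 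The paper's route via \cref{eq:4211} and \cref{eq:4212} achieves the same effect differently: it compares the two inverse maps at the \emph{common} argument $c(s)$, which satisfies $|c(s)|\le T\|\lambda\|_{\sL^{\infty}((0,T);\sL^{\infty}(\R))}$, and substitutes $\ast=\cZ[v]^{-1}(x_{0};u)$, so every evaluation point again stays inside $X(x_{0},v,\lambda)$. With that single reordering your argument becomes a correct proof; as written, the key inequality is not justified.
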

\begin{proof} We start by proving \cref{eq:stability_z}. To achieve this, recall the definition of \(\cZ\) in \cref{eq:surrogate_system1}. From this definition, we can make the following estimate for \(u\in\R\):
\begin{align}
    \big\vert \cZ[v_{0}](x_{0}\sk u)-\cZ[\tilde{v}_{0}](\tilde{x}_{0}\sk u)\big\vert 
    &= \bigg\vert \int_{x_0}^{u}\!\!\!\tfrac{1}{v(s)}  \dd s - \int_{\tilde x_0}^u \tfrac{1}{\tilde v(s)}  \dd s\bigg\vert\notag \\
    &\leq \bigg\vert \int_{x_0}^{\tilde x_0}\!\!\!\tfrac{1}{v(s)}  \dd s\bigg\vert  + \!\int_{\min\{x_0,\tilde x_0,u\}}^{\max\{x_0,\tilde x_0,u\}} \!\Big\vert \tfrac{ v(s)-\tilde v(s)}{\tilde v(s)v(s)}  \Big\vert \dd s\label{eq:123123123123}\\
    &\leq \tfrac{1}{\underline{v}} \vert x_0 - \tilde x_0\vert  + \tfrac{1}{\underline{v}^2}\|v-\tilde v\|_{\sL^1((\min\{x_0,\tilde x_0,u\},\max\{x_0,\tilde x_0,u\}))}.\notag
\end{align}
This proves the first claim. For the second, namely the estimate in \cref{eq:stability_c}, we
first show that
\begin{equation}
 \tfrac{1}{\|v\|_{\sL^{\infty}(\R)}}\vert x-\tilde{x}\vert \leq \vert \cZ[v](x_{0}\sk x)-\cZ[v](x_{0}\sk \tilde{x})\vert \qquad \forall x,\tilde{x}\in\R^{2}.\label{eq:lower_bound_z_Lipschitz}
\end{equation}
Recalling again \cref{eq:surrogate_system1} we end up with
\begin{align*}
\vert \cZ[v](x_{0}\sk x)-\cZ[v](x_{0}\sk \tilde{x})\vert &=\bigg\vert \int_{\tilde{x}}^{x}\tfrac{1}{v(s)}  \dd s\bigg\vert =\int_{\min\{x,\tilde{x}\}}^{\max\{x,\tilde{x}\}}\!\!
\tfrac{1}{v(s)}\dd s\\
&\geq \big(\max\{x,\tilde{x}\}-\min\{x,\tilde{x}\}\big)\tfrac{1}{\|v\|_{\sL^{\infty}(\R)}},
\end{align*}
which is exactly \cref{eq:lower_bound_z_Lipschitz}.

Finally, focusing on \cref{eq:stability_c}, we recall the definition of \(\cC\) in \cref{eq:surrogate_system2}
and thus estimate for \(t\in[0,T]\)
\begin{align}
&\Big\vert \cC\big[\lambda,\cZ[v](x_{0},\cdot)\big](t)-\cC\big[\tilde{\lambda},\cZ[\tilde{v}](\tilde{x}_{0}\sk \cdot)\big](t)\Big\vert \notag\\ 
&=\bigg\vert \int_0^t \lambda\Big(s,\cZ[v]^{-1}\big(x_{0}\sk c\big[\lambda,\cZ[v]\big(x_{0}\sk \cdot)\big](s)\big)\Big)\notag\\
&\qquad\qquad\qquad\qquad-\tilde{\lambda}\Big(s,\cZ[\tilde{v}]^{-1}\big(\tilde{x}_{0}\sk c\big[\tilde{\lambda},\cZ[\tilde{v}](\tilde{x}_{0}\sk \cdot)\big](s)\big)\Big) \dd s\bigg\vert \notag\\
    &\leq  \mL_{2}\int_{0}^{t} \Big\vert \cZ[v]^{-1}\big(x_{0}\sk c\big[\lambda,\cZ[v]\big(x_{0}\sk \cdot)\big](s)\big)-\cZ[\tilde{v}]^{-1}\big(\tilde{x}_{0}\sk c\big[\tilde{\lambda},\cZ[\tilde{v}](\tilde{x}_{0}\sk \cdot)\big](s)\big)\Big\vert \dd s\notag\\
    &\quad + \int_{0}^{t}\|\lambda(s,\cdot)-\tilde{\lambda}(s,\cdot)\|_{\sL^{\infty}(X(x_{0},v,\lambda))}\dd s\notag\\
     &\leq \mL_{2}\int_{0}^{t} \Big\vert \cZ[v]^{-1}\big(x_{0}\sk c\big[\lambda,\cZ[v]\big(x_{0}\sk \cdot)\big](s)\big)\notag\\
     &\qquad\qquad\qquad\qquad-\cZ[\tilde{v}]^{-1}\big(\tilde{x}_{0}\sk c\big[\lambda,\cZ[v](x_{0}\sk \cdot)\big](s)\big)\Big\vert \dd s\label{eq:4211}\\
    &\quad + \mL_{2}\int_{0}^{t} \Big\vert \cZ[\tilde{v}]^{-1}\big(\tilde{x}_{0}\sk c\big[\lambda,\cZ[v]\big(x_{0}\sk \cdot)\big](s)\big)\notag\\
    &\qquad\qquad\qquad\qquad-\cZ[\tilde{v}]^{-1}\big(\tilde{x}_{0}\sk c\big[\tilde{\lambda},\cZ[\tilde{v}](\tilde{x}_{0}\sk \cdot)\big](s)\big)\Big\vert \dd s\label{eq:4212}\\
    &\quad + \int_{0}^{t}\|\lambda(s,\cdot)-\tilde{\lambda}(s,\cdot)\|_{\sL^{\infty}(X(x_{0},v,\lambda))}\dd s. \notag
    \end{align}
    For the term in \cref{eq:4212}, we now apply  \cref{eq:lower_bound_z_Lipschitz}. As \(\|v\|_{\sL^{\infty}(\R)}\) is then an upper bound on the derivative of \(\cZ[v]^{-1}\) (recall the estimate in \cref{eq:lower_upper_bound_cZ}), we obtain
    \begin{align*}
        \eqref{eq:4212} &\leq \mathcal L_2\|\tilde{v}\|_{\sL^{\infty}(\R)}\int_{0}^{t} \big\vert \cC[\lambda,\cZ[v](x_{0}\sk \cdot)](s)-\cC[\tilde{\lambda},\cZ[\tilde{v}(\tilde{x_{0}},\cdot)](s)\big\vert \dd s.
    \end{align*}
    For the term in \cref{eq:4211}, we get the following estimate by substitution:
    \begin{align*}
        \eqref{eq:4211}    &\leq  \mathcal L_2 t \big\|\cZ[v]^{-1}\big(x_{0}\sk \cC\big[\lambda,\cZ[v]\big(x_{0}\sk \cdot)\big]\big)-\cZ[\tilde{v}]^{-1}\big(\tilde{x}_{0}\sk \cC\big[\lambda,\cZ[v](x_{0}\sk \cdot)\big]\big)\big\|_{\sL^{\infty}((0,T))}\\
 &\leq  \mathcal L_2 t \big\|\cZ[v]^{-1}\big(x_{0}\sk \cdot\big)-\cZ[\tilde{v}]^{-1}\big(\tilde{x}_{0}\sk \cdot\big)\big\|_{\sL^{\infty}((- T\|\lambda\|_{\sL^{\infty}((0,T)\sk \sL^{\infty}(\R))},T\|\lambda\|_{\sL^{\infty}((0,T)\sk \sL^{\infty}(\R))}))}\\
    &\leq  \mL_2 t\big\|\ast-\cZ[\tilde{v}]^{-1}\big(\tilde{x}_{0}\sk \cZ[v](x_{0}\sk \ast)\big)\big\|_{\sL^{\infty}(X(x_0,v,\lambda))},\\
    \intertext{with $X(x_0,v,\lambda)$  defined in \cref{eq:X_x_0}}
    &\leq  \mL_2 t\Big\|\cZ[\tilde{v}]^{-1}\big(\tilde{x}_{0}\sk \cZ[\tilde{v}](\tilde{x}_{0}\sk \ast)\big)-\cZ[\tilde{v}]^{-1}\big(\tilde{x}_{0}\sk \cZ[v](x_{0}\sk \ast)\big)\Big\|_{\sL^{\infty}(X(x_0,v,\lambda))}.\\
    \intertext{Applying once more \cref{eq:lower_bound_z_Lipschitz}, as well as \(\|\tilde{v}\|_{\sL^{\infty}(\R)}\), there is then an upper bound on the derivative of \(\cZ[\tilde{v}]^{-1}\) when recalling once more \cref{eq:lower_upper_bound_cZ}}
    &\leq  \mL_2 t\|\tilde{v}\|_{\sL^{\infty}(\R)}\big\|\cZ[\tilde{v}](\tilde{x}_{0}\sk \cdot)-\cZ[v](x_{0}\sk \cdot)\big\|_{\sL^{\infty}(X(x_0,v,\lambda))}.
    \end{align*}
    Thus, all together we obtain 
    \begin{align*}    
    &\Big\vert \cC\big[\lambda,\cZ[v](x_{0},\cdot)\big](t)-\cC\big[\tilde{\lambda},\cZ[\tilde{v}](\tilde{x}_{0}\sk \cdot)\big](t)\Big\vert  \\
    &\quad\leq  \mL_2 t\|\tilde{v}\|_{\sL^{\infty}(\R)}\big\|\cZ[\tilde{v}](\tilde{x}_{0}\sk \cdot)-\cZ[v](x_{0}\sk \cdot)\big\|_{\sL^{\infty}(X(x_0,v,\lambda))}\\
    &\qquad +\mathcal L_2\|\tilde{v}\|_{\sL^{\infty}(\R)}\int_{0}^{t} \big\vert \cC[\lambda,\cZ[v](x_{0}\sk \cdot)](s)-\cC[\tilde{\lambda},\cZ[\tilde{v}(\tilde{x_{0}},\cdot)](s)\big\vert \dd s \\
    &\qquad + \int_{0}^{t}\|\lambda(s,\cdot)-\tilde{\lambda}(s,\cdot)\|_{\sL^{\infty}(X(x_{0},v,\lambda))}\dd s.
\end{align*}
The claimed inequality in \eqref{eq:stability_c} then follows by applying Gr\"onwall's inequality \cite[Chapter I, III Gronwall's inequality]{walter}, concluding the proof.
\end{proof}
Having obtained the previous stability results on the ``surrogate system'', we can apply these results to obtain the stability of solutions to the original discontinuous ODE in \cref{defi:disc_ODE} in the sense of \cref{defi:weak}:
\begin{theo}[Stability of solutions for initial datum and velocities]\label{theo:stability}
For \((x_{0},\tilde{x}_{0})\in\R^{2}\), \((v,\tilde{v})\in\sL^{\infty}\big(\R\sk \R_{\geq \uv}\big)^{2}\) and \((\lambda,\tilde{\lambda})\in \sL^{\infty}\big((0,T)\sk \sW^{1,\infty}(\R)\big)\), the following stability result holds for the corresponding solutions \(\cX\) as in \cref{defi:weak} for all \(t\in[0,T]\):
\begin{align}
    &\big\vert \cX[v,\lambda](x_0\sk t)-\cX[\tilde{v},\tilde{\lambda}](\tilde{x}_{0}\sk t)\big\vert \notag\\
    & \leq\|v\|_{\sL^{\infty}(\R)}\e^{t\|v\|_{\sL^{\infty}(\R)}\mathcal L_2}\int_{0}^{t}\|\lambda(s,\cdot)-\tilde{\lambda}(s,\cdot)\|_{\sL^{\infty}(X(x_{0},v,\lambda))}\dd s \label{eq:stability_x}\\
    &\ +\Big(\|v\|_{\sL^{\infty}(\R)}\e^{t\|v\|_{\sL^{\infty}(\R)}\mathcal L_2}t\mL_{2}+1\Big)\tfrac{\|\tilde{v}\|_{\sL^{\infty}(\R)}}{\uv}\Big(\vert x_{0}-\tilde{x}_{0}\vert +\tfrac{1}{\uv}\|v-\tilde{v}\|_{\sL^{1}(Y(x_0,\tilde x_0,v,\mL))}\Big),\notag
\end{align}
where
\begin{small}
\begin{align}
   \mL_{2}&\:\max\big\{\|\partial_{2}\lambda\|_{\sL^{\infty}((0,T)\sk \sL^{\infty}(\R))},\|\partial_{2}\tilde{\lambda}\|_{\sL^{\infty}((0,T)\sk \sL^{\infty}(\R))}\big\} \\
   \mL&\:\max\big\{\| \lambda\|_{\sL^{\infty}((0,T)\sk \sL^{\infty}(\R))},\|\tilde \lambda\|_{\sL^{\infty}((0,T)\sk \sL^{\infty}(\R))}\big\} \\ 
  Y(x_0,\tilde x_0,v,\mathcal L) &\:  \Big(\min\big\{x_0-T \|v\|_{\sL^{\infty}(\R)} \mathcal L,\tilde x_0\big\},\max\big\{x_0+T \|v\|_{\sL^{\infty}(\R)}\mathcal L,\tilde x_0\big\}\Big) \label{eq:def_Z}\\
  X(x_{0},v,\lambda)&\: x_{0}+ T\|v\|_{\sL^\infty(\R)}\|\lambda\|_{\sL^{\infty}((0,T)\sk \sL^{\infty}(\R))}\cdot(-1,1) \subset \R.\label{defi:X}
\end{align}
\end{small}
\end{theo}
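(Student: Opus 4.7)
The plan is to leverage the factorization $\cX \equiv \cZ^{-1}\circ \cC$ established in \cref{theo:surrogate} (see \cref{eq:identity_surrogate_system}) so that the stability of $\cX$ reduces to the already proven stability estimates \cref{eq:stability_z} for $\cZ$ and \cref{eq:stability_c} for $\cC$. Writing $c\: \cC[\lambda,\cZ[v](x_0\sk\cdot)](t)$ and $\tilde c\: \cC[\tilde\lambda,\cZ[\tilde v](\tilde x_0\sk\cdot)](t)$, I would insert the mixed intermediate term $\cZ[\tilde v]^{-1}(\tilde x_0\sk c)$ and apply the triangle inequality to obtain
\[
\bigl|\cX[v,\lambda](x_0\sk t)-\cX[\tilde v,\tilde\lambda](\tilde x_0\sk t)\bigr|
\leq \bigl|\cZ[v]^{-1}(x_0\sk c)-\cZ[\tilde v]^{-1}(\tilde x_0\sk c)\bigr|
+\bigl|\cZ[\tilde v]^{-1}(\tilde x_0\sk c)-\cZ[\tilde v]^{-1}(\tilde x_0\sk \tilde c)\bigr|.
\]

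For the second summand I would use that by \cref{eq:lower_upper_bound_cZ} the map $\cZ[\tilde v]^{-1}(\tilde x_0\sk\cdot)$ is Lipschitz-continuous with constant $\|\tilde v\|_{\sL^\infty(\R)}$, hence it is bounded by $\|\tilde v\|_{\sL^\infty(\R)}\,|c-\tilde c|$, and $|c-\tilde c|$ is directly controlled by the already-proved estimate \cref{eq:stability_c} in \cref{theo:existence_uniqueness_stability_surrogate}. Note that when feeding this into the resulting bound, the $\sL^\infty$-norm of $\cZ[v](x_0\sk\cdot)-\cZ[\tilde v](\tilde x_0\sk\cdot)$ appearing on the right-hand side of \cref{eq:stability_c} is bounded by \cref{eq:stability_z} applied pointwise on the window $X(x_0,v,\lambda)$, which is exactly where the factor $\tfrac{\|\tilde v\|_{\sL^\infty(\R)}}{\uv}\bigl(|x_0-\tilde x_0|+\tfrac{1}{\uv}\|v-\tilde v\|_{\sL^1(Y(x_0,\tilde x_0,v,\mL))}\bigr)$ will emerge.

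For the first summand I would set $y\: \cZ[v]^{-1}(x_0\sk c)$, which satisfies $y\in X(x_0,v,\lambda)$ by the very definition of $\cC$ and $X$, and rewrite
\[
\cZ[v]^{-1}(x_0\sk c)-\cZ[\tilde v]^{-1}(\tilde x_0\sk c) = \cZ[\tilde v]^{-1}\big(\tilde x_0\sk \cZ[\tilde v](\tilde x_0\sk y)\big)-\cZ[\tilde v]^{-1}\big(\tilde x_0\sk \cZ[v](x_0\sk y)\big).
\]
Again by the Lipschitz-constant $\|\tilde v\|_{\sL^\infty(\R)}$ of $\cZ[\tilde v]^{-1}$, this is bounded by $\|\tilde v\|_{\sL^\infty(\R)}\,|\cZ[\tilde v](\tilde x_0\sk y)-\cZ[v](x_0\sk y)|$, and then \cref{eq:stability_z} applied at $u=y\in X(x_0,v,\lambda)$ yields the bound $\tfrac{\|\tilde v\|_{\sL^\infty(\R)}}{\uv}|x_0-\tilde x_0|+\tfrac{\|\tilde v\|_{\sL^\infty(\R)}}{\uv^2}\|v-\tilde v\|_{\sL^1(Y(x_0,\tilde x_0,v,\mL))}$, where the $\sL^1$-integration domain has automatically collapsed into $Y(x_0,\tilde x_0,v,\mL)$ because $y$ ranges in $X(x_0,v,\lambda)\subset (x_0-T\|v\|_{\sL^\infty(\R)}\mL,x_0+T\|v\|_{\sL^\infty(\R)}\mL)$.

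Adding the two contributions and collecting factors produces the stated inequality \cref{eq:stability_x}, with the prefactor $\bigl(\|v\|_{\sL^\infty(\R)}\e^{t\|v\|_{\sL^\infty(\R)}\mL_2}t\mL_2+1\bigr)$ arising from combining the direct $\cZ$-stability contribution (the ``$+1$'') with the indirect contribution routed through $\cC$ (the $\mL_2 t \e^{\cdots}$ factor already present in \cref{eq:stability_c}). I expect the main technical hurdle to be purely bookkeeping: carefully matching the localization windows so that every $\sL^1$-norm of $v-\tilde v$ that appears is genuinely taken over the set $Y(x_0,\tilde x_0,v,\mL)$ and every $\sL^\infty$-norm of $\lambda-\tilde\lambda$ is taken over $X(x_0,v,\lambda)$, rather than over unnecessarily large intervals that would weaken the estimate. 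No further analytic ingredients beyond \cref{theo:existence_uniqueness_stability_surrogate} and the explicit Lipschitz bound \cref{eq:lower_upper_bound_cZ} are needed.
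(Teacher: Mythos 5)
Your strategy is exactly the paper's: factor $\cX=\cZ^{-1}\circ\cC$ via \cref{eq:identity_surrogate_system}, split by the triangle inequality through a mixed intermediate term, control the pure-$\cZ$ part with \cref{eq:stability_z} after relocating it with the Lipschitz bound \cref{eq:lower_upper_bound_cZ}, and control the $\cC$-part with \cref{eq:stability_c}; the bookkeeping you describe for the windows ($y\in X(x_0,v,\lambda)$, collapse of the $\sL^1$-domain into $Y(x_0,\tilde x_0,v,\mL)$) is also how the paper proceeds and is sound.

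There is, however, one concrete discrepancy: you insert the intermediate term $\cZ[\tilde v]^{-1}(\tilde x_0\sk c)$, so the summand containing $\vert c-\tilde c\vert$ is multiplied by the Lipschitz constant $\|\tilde v\|_{\sL^{\infty}(\R)}$ of $\cZ[\tilde v]^{-1}$. Feeding \cref{eq:stability_c} into that term then yields a bound with $\|\tilde v\|_{\sL^{\infty}(\R)}\e^{t\|v\|_{\sL^{\infty}(\R)}\mL_2}$ in front of the $\lambda$-difference integral and $\big(\|\tilde v\|_{\sL^{\infty}(\R)}\e^{t\|v\|_{\sL^{\infty}(\R)}\mL_2}t\mL_2+1\big)$ in the second prefactor, whereas \cref{eq:stability_x} asserts $\|v\|_{\sL^{\infty}(\R)}$ in both places; since nothing orders $\|\tilde v\|_{\sL^{\infty}(\R)}$ against $\|v\|_{\sL^{\infty}(\R)}$, your route proves a correct stability estimate of the same type but not literally the stated inequality. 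The paper avoids this by inserting the other mixed term, $\cZ[v]^{-1}\big(x_0\sk \tilde c\big)$ with $\tilde c=\cC[\tilde\lambda,\cZ[\tilde v](\tilde x_0\sk\cdot)](t)$: then the $\vert c-\tilde c\vert$ part carries the constant $\|v\|_{\sL^{\infty}(\R)}$, while the pure-$\cZ$ part, evaluated at $y=\cZ[v]^{-1}(x_0\sk\tilde c)\in X(x_0,v,\tilde\lambda)\subset x_0+T\|v\|_{\sL^{\infty}(\R)}\mL\,(-1,1)$, carries $\|\tilde v\|_{\sL^{\infty}(\R)}/\uv$ exactly as in the claim; with this one swap the rest of your argument goes through verbatim and reproduces \cref{eq:stability_x} with the stated constants and localization sets.
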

\begin{proof}
This is a direct application of the previous stability results for the surrogate system of ODEs in \cref{theo:existence_uniqueness_stability_surrogate}. We detail the required steps in the following and start by estimating for \(t\in[0,T]\) using \cref{eq:identity_surrogate_system}
\begin{align}
   &\Big\vert \cX[v,\lambda](x_0\sk t)-\cX\big[\tilde{v},\tilde{\lambda}\big](\tilde{x}_{0}\sk t)\Big\vert \label{eq:2810}\\
    &\quad=\Big\vert \cZ[v]^{-1}\big(x_{0}\sk  \cC[\lambda,\cZ[v](x_{0}\sk \cdot)](t)\big)-\cZ[\tilde{v}]^{-1}\big(\tilde{x}_{0},\cC\big[\tilde{\lambda},\cZ[\tilde{v}](\tilde{x}_{0}\sk \cdot)\big](t)\big)\Big\vert \\
    &\quad\leq\Big\vert \cZ[v]^{-1}\big(x_{0}\sk  \cC[\lambda,\cZ[v](x_{0}\sk \cdot)](t)\big)-\cZ[v]^{-1}\big(x_{0}\sk \cC[\tilde \lambda,\cZ[\tilde{v}](\tilde{x}_{0}\sk \cdot)](t)\big)\Big\vert \notag\\
    &\qquad +\Big\vert \cZ[v]^{-1}\big(x_{0}\sk \cC[\tilde{\lambda},\cZ[\tilde{v}](\tilde{x}_{0}\sk \cdot)](t)\big)-\cZ[\tilde{v}]^{-1}\big(\tilde{x}_{0}\sk \cC[\tilde{\lambda},\cZ[\tilde{v}](\tilde{x}_{0}\sk \cdot)](t)\big)\Big\vert, \notag
    \intertext{applying \cref{eq:lower_bound_z_Lipschitz} so that we can estimate the derivative of the inverse of \(\cZ[v](x_{0}\sk \cdot)\) by \(\|v\|_{\sL^{\infty}(\R)}\),}
    &\quad\leq \|v\|_{\sL^{\infty}(\R)}\big\vert \cC[\lambda,\cZ[v](x_{0}\sk \cdot)](t)-\cC[\tilde{\lambda},\cZ[\tilde{v}](\tilde{x}_{0}\sk \cdot)](t)\big\vert \notag\\
    &\qquad +\Big\vert \cZ[v]^{-1}\big(x_{0}\sk \cC[\tilde{\lambda},\cZ[\tilde{v}](\tilde{x}_{0}\sk \cdot)](t)\big)-\cZ[\tilde{v}]^{-1}\big(\tilde{x}_{0}\sk \cC[\tilde{\lambda},\cZ[\tilde{v}](\tilde{x}_{0}\sk \cdot)](t)\big)\Big\vert. \label{eq:3112}
\end{align}
Focusing on the second term, we have with $y\: \cZ[v]^{-1}\big(x_{0}\sk \cC[\tilde{\lambda},\cZ[\tilde{v}](\tilde{x}_{0}\sk \cdot)](t)\big)$ and thus \(\cZ[v](x_{0}\sk y)=\cC[\tilde{\lambda},\cZ[\tilde{v}](\tilde{x}_{0}\sk \cdot)](t)\)
\begin{align}
    \eqref{eq:3112} &=\Big\vert y-\cZ[\tilde{v}]^{-1}\big(\tilde{x}_{0}\sk \cZ[v](x_0\sk y))\big)\Big\vert.     
    \intertext{Applying \(y=\cZ[\tilde{v}]^{-1}(\tilde{x}_{0}\sk \cZ[\tilde{v}](\tilde{x}_{0}\sk y))\)}
    &= \Big\vert \cZ[\tilde{v}]^{-1}(\tilde{x}_{0}\sk \cZ[\tilde{v}](\tilde{x}_{0}\sk y))-\cZ[\tilde{v}]^{-1}\big(\tilde{x}_{0}\sk \cZ[v](x_0\sk y))\big)\Big\vert. 
    \intertext{Again using \cref{eq:lower_bound_z_Lipschitz} to obtain a Lipschitz-estimate from above for the inverse mapping}
    & \leq \|\tilde{v}\|_{\sL^\infty(\R)}\vert \cZ[\tilde{v}](\tilde{x}_{0}\sk y)-\cZ[v](x_0\sk y)\vert . \label{eq:2212}
\end{align}
Next, estimating \(y\) we have by the definitions of \(\cZ[v]\) and \(\cC[\cdot,\ast]\) as in \cref{eq:surrogate_system1,eq:surrogate_system2} (and by \cref{eq:lower_bound_z_Lipschitz} to once more obtain an upper bound on the Lipschitz constant of \(\cZ[v]^{-1}\))
\begin{align*}
\vert y - x_0\vert  &= 
    \big\vert \cZ[v]^{-1}\big(x_{0}\sk \cC[\tilde{\lambda},\cZ[\tilde{v}](\tilde{x}_{0}\sk \cdot)](t)\big) -  \cZ[v]^{-1}\big(x_{0}\sk 0\big)\big\vert  \\
    &\leq\|v\|_{\sL^{\infty}(\R)}\vert \cC[\tilde{\lambda},\cZ[\tilde{v}](\tilde{x}_{0}\sk \cdot)](t)\vert\leq T \|v\|_{\sL^{\infty}(\R)}\|\tilde \lambda\|_{\sL^{\infty}((0,T)\sk \sL^{\infty}(\R)).}
\end{align*}
In the last estimate we have used the identity for \(\cC\) \cref{eq:surrogate_system2}, which implies the upper bounds used.
Altogether, this implies that \(y\in X(x_{0},v,\tilde{\lambda})\) with \(X\) as in \cref{eq:X_x_0}.
Continuing the estimate, we have
\begin{align}
     \eqref{eq:3112} \leq \eqref{eq:2212}&= \|\tilde{v}\|_{\sL^\infty(\R)}\vert \cZ[\tilde{v}](\tilde{x}_{0}\sk y)-\cZ[v](x_0\sk y)\vert  \notag\\
         &\leq \|\tilde{v}\|_{\sL^\infty(\R)}\|\cZ[\tilde{v}](\tilde{x}_{0}\sk \cdot)-\cZ[v](x_0\sk \cdot)\|_{\sL^\infty (X(x_0,v,\tilde \lambda))} \label{eq:12341234}
         \intertext{and applying the stability estimate in \(\cZ\) in \cref{theo:existence_uniqueness_stability_surrogate}}
    &\overset{\eqref{eq:stability_z}}{\leq} \|\tilde{v}\|_{\sL^\infty(\R)} \Big(\tfrac{1}{\underline{v}} \vert x_0 - \tilde x_0\vert  + \tfrac{1}{\uv^2}\|v-\tilde v\|_{\sL^1(Y(x_0,\tilde x_0,v,\mathcal L))} \Big), \notag
\end{align}
with $Y$ as defined in \cref{eq:def_Z}.

Continuing the original estimate, we take advantage of the stability in \(\cC\) in \cref{eq:stability_c} and have
\begin{align*}
   \eqref{eq:2810} &\leq \|v\|_{\sL^{\infty}(\R)}\e^{t\|v\|_{\sL^{\infty}(\R)}\mathcal L_2}\int_{0}^{t}\!\!\big\|\lambda(s,\cdot)-\tilde \lambda(s,\cdot)\big\|_{\sL^{\infty}(X(x_{0},v,\lambda))}\dd s \\
    &\quad +\|v\|_{\sL^{\infty}(\R)}\e^{t\|v\|_{\sL^{\infty}(\R)}\mathcal L_2} t\|\tilde{v}\|_{\sL^{\infty}(\R)}\mathcal L_2\big\|\cZ[v](x_0\sk \cdot)-\cZ[\tilde{v}](\tilde{x}_{0}\sk \cdot)\big\|_{\sL^{\infty}(X(x_0,v,\lambda))}\\
    &\quad + \|\tilde{v}\|_{\sL^\infty(\R)} \Big(\tfrac{1}{\underline{v}} \vert x_0 - \tilde x_0\vert  + \tfrac{1}{\uv^2}\|v-\tilde v\|_{\sL^1(Y(x_0,\tilde x_0,v,\mathcal L))} \Big). 
    \intertext{Using the stability estimate for \(\cZ[\cdot]\) in \cref{eq:stability_z}}
    &\leq \|v\|_{\sL^{\infty}(\R)}\e^{t\|v\|_{\sL^{\infty}(\R)}\mathcal L_2}\bigg(\int_{0}^{t}\big\|\lambda(s,\cdot)-\tilde \lambda(s,\cdot)\big\|_{\sL^{\infty}(X(x_{0},v,\lambda))}\dd s\\
    &\qquad\qquad\qquad\qquad\qquad+ t\|\tilde{v}\|_{\sL^{\infty}(\R)}\mathcal L_2\Big(\tfrac{1}{\underline{v}} \vert x_0 - \tilde x_0\vert  + \tfrac{1}{\uv^2}\|v-\tilde v\|_{\sL^1(Y(x_{0},\tilde{x}_{0},v,\mL)}\Big)\bigg)\\
    &\quad + \|\tilde{v}\|_{\sL^\infty(\R)} \Big(\tfrac{1}{\underline{v}} \vert x_0 - \tilde x_0\vert  + \tfrac{1}{\uv^2}\|v-\tilde v\|_{\sL^1(Y(x_0,\tilde x_0,v,\mathcal L))} \Big) \\
    &= \|v\|_{\sL^{\infty}(\R)}\e^{t\|v\|_{\sL^{\infty}(\R)}\mathcal L_2}\int_{0}^{t}\|\lambda(s,\cdot)-\tilde{\lambda}(s,\cdot)\|_{\sL^{\infty}(X(x_{0},v,\lambda))}\dd s\\
    &\quad +\Big(\|v\|_{\sL^{\infty}(\R)}\e^{t\|v\|_{\sL^{\infty}(\R)}\mathcal L_2}t\mL_{2}+1\Big)\tfrac{\|\tilde{v}\|_{\sL^{\infty}(\R)}}{\uv}\Big(\vert x_{0}-\tilde{x}_{0}\vert +\tfrac{1}{\uv}\|v-\tilde{v}\|_{\sL^{1}(Y(x_{0},\tilde{x}_{0},v,\mL))}\Big).\end{align*}
This is indeed the claimed estimate and thus the proof is concluded.    
\end{proof}
The previous result in \cref{theo:stability} gives Lipschitz-continuity for the solution with regard to the initial datum. 
As the existence of an explicit solution formula for smooth velocities (\(v,\lambda\)) for \(\partial_{3}\cX\) is important for several later results, we detail it in the following:
\begin{rem}[An ``explicit'' formula for {$\partial_{x_{0}}\cX[v,\lambda](x_{0}\sk \cdot)$} if $v$ is smooth]\label{rem:partial_3_X_explicit}
Let \cref{ass:input_datum} hold and also  \(v\in \sC^{1}(\R)\) and \(\lambda\in \sC^{1}([0,T]\times\R)\). Then we have for \((x_{0},t)\in\R\times(0,T)\) a solution formula for the derivative of \(\cX[v,\lambda](x_{0},\cdot)\) with regard to \(x_{0}\in\R\), namely for \(t\in[0,T]\)
\begin{equation}
 \partial_{3}\cX[v,\lambda](x_{0}\sk t)=\tfrac{v(\cX[v,\lambda](x_0\sk t))}{v(x_0)} \e^{\int_{0}^{t}\partial_2 \lambda\big(s,\cX[v,\lambda](x_{0}\sk s)\big) v\big(\cX[v,\lambda](x_{0}\sk s)\big) \dd s} .\label{eq:partial_1_x_explicit_solution_formula_1}
\end{equation}
To show this, we can differentiate through the integral form of the -- now -- continuously differentiable IVP in \cref{defi:disc_ODE}. We thus take the derivative of \(\cX\) with regard to \(x_{0}\) and have -- following the Carath\'eodory solution in \cref{defi:cara} -- for \(t\in[0,T]\)
\begin{align}
    &\partial_{3}\cX[v,\lambda](x_{0}\sk t) \notag\\
    &\quad=1+\int_{0}^{t} \Big(v'(\cX[v,\lambda](x_{0}\sk s))\lambda(s,\cX[v,\lambda](x_{0}\sk s)) \label{eq:partial_1_x_explicit_solution_formula_2}\\
    &\qquad\qquad +v(\cX[v,\lambda](x_{0}\sk s))\partial_{2}\lambda(s,\cX[v,\lambda](x_{0}\sk s))\Big)
     \cdot\partial_{3}\cX[v,\lambda](x_{0}\sk s)\dd s, 
\notag
\end{align}
which we can explicitly solve to obtain
\begin{align*}
    \partial_{3}\cX[v,\lambda](x_{0}\sk t)=\e^{\int_{0}^{t}a(s)+v(\cX[v,\lambda](x_{0}\sk s))\partial_{2}\lambda(s,\cX[v,\lambda](x_{0}\sk s))\dd s}
\end{align*}
with $a(s) \: v'(\cX[v,\lambda](x_{0}\sk s))\lambda(s,\cX[v,\lambda](x_{0}\sk s))$.
Focusing on the first factor we can write
\begin{align*}
    \e^{\int_0^t a(s) \dd s }
    &=\e^{\int_{0}^{t}v'(\cX[v,\lambda](x_{0}\sk s))\tfrac{v(\cX[v,\lambda](x_{0}\sk s))}{v(\cX[v,\lambda](x_{0}\sk s))}\lambda(s,\cX[v,\lambda](x_{0}\sk s))\dd s }
    \intertext{and by \cref{defi:cara}, i.e.\ \(\partial_{4}\cX[v,\lambda](\cdot\sk \ast)\equiv v\big(\cX[v,\lambda](\cdot\sk \ast)\big)\lambda\big(\ast,\cX[v,\lambda](\cdot\sk \ast)\big)\)}
    &=\e^{\int_{0}^{t}\tfrac{v'(\cX[v,\lambda](x_{0}\sk s))}{v(\cX[v,\lambda](x_{0}\sk s))}\partial_{4}\cX[v,\lambda](x_{0}\sk s)\dd s}\\
    &=\e^{\big[\ln\big(v\big(\cX[v,\lambda](x_{0}\sk s)\big)\big)\big]_{s=0}^{s=t} }=\tfrac{v\big(\cX[v,\lambda](x_{0}\sk t)\big)}{v(x_{0})}.
\end{align*}
In the latter manipulation, we also employed the fact that, according to \cref{defi:cara}, \(\cX[v,\lambda](x_{0}\sk 0)=x_{0}\).
Together with \cref{eq:partial_1_x_explicit_solution_formula_2}, the solution formula for \(\partial_{3}\cX\) in \cref{eq:partial_1_x_explicit_solution_formula_1} follows.
Although the solution formula in \cref{eq:partial_1_x_explicit_solution_formula_1} does not require any regularity on the derivative of \(v\), there is a problem in its interpretation if \(\partial_{2}\lambda\) and \(v\) are not continuous in the time integration in the exponent. For instance, assume that the characteristic curve \(\cX[v,\lambda](x_{0};\cdot)\) is constant over a small time horizon. It is not clear how to interpret the integral in \cref{eq:partial_1_x_explicit_solution_formula_1} over this specific time horizon.
\end{rem}
To obtain an improved estimate for \(\partial_{3}\cX\), we take advantage of \cref{rem:partial_3_X_explicit} for smooth datum \((v,\lambda)\) and use an approximation argument. This is performed in the following \cref{cor:upper_lower_bounds_partial_3_X}:
\begin{cor}[Improved bounds on the Lipschitz constant of $\cX$]\label{cor:upper_lower_bounds_partial_3_X}
Let velocities \(v,\lambda\) satisfy \cref{ass:input_datum}. Then, the solution of the discontinuous IVP as in \cref{defi:weak} satisfies  \(\forall (t,x_{0},\tilde{x}_{0})\in[0,T]\times\R^{2}\)
\begin{align}
\tfrac{\uv}{\|v\|_{\sL^{\infty}(\R)}}\e^{-t \mathcal{L}_{2}\|v\|_{\sL^{\infty}(\R)}} \leq \tfrac{\vert \cX[v,\lambda](x_{0}\sk t)-\cX[v,\lambda](\tilde{x}_{0}\sk t)\vert }{\vert x_{0}-\tilde{x}_{0}\vert }\leq \tfrac{\|v\|_{\sL^{\infty}(\R)}}{\uv}\e^{t \mathcal{L}_{2}\|v\|_{\sL^{\infty}(\R)}}\label{eq:improved_bound_partial_3_X}
\end{align}
with  \(\mathcal{L}_{2}\:\|\partial_{2}\lambda\|_{\sL^{\infty}((0,T)\sk \sL^{\infty}(\R))}.\)
\end{cor}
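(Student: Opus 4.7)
The plan is to establish the two-sided bound first under the additional assumption that $v \in \sC^{1}(\R)$ and $\lambda\in \sC^{1}([0,T]\times\R)$, where \cref{rem:partial_3_X_explicit} furnishes the explicit representation
\[
\partial_{3}\cX[v,\lambda](x_{0}\sk t)=\tfrac{v(\cX[v,\lambda](x_0\sk t))}{v(x_0)}\,\e^{\int_{0}^{t}\partial_2 \lambda(s,\cX[v,\lambda](x_{0}\sk s))\, v(\cX[v,\lambda](x_{0}\sk s))\, \dd s},
\]
and then to remove the smoothness assumption by an approximation argument based on the stability result \cref{theo:stability}.

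For smooth $(v,\lambda)$, I would bound the two factors of the explicit formula separately. Since $\uv \le v \le \|v\|_{\sL^{\infty}(\R)}$ pointwise, the ratio $v(\cX[v,\lambda](x_0\sk t))/v(x_0)$ lies in $[\uv/\|v\|_{\sL^{\infty}(\R)},\|v\|_{\sL^{\infty}(\R)}/\uv]$, and the integrand in the exponent is bounded in absolute value by $\mathcal{L}_{2}\|v\|_{\sL^{\infty}(\R)}$, so the exponential lies in $[\e^{-t \mathcal{L}_{2}\|v\|_{\sL^{\infty}(\R)}},\e^{t \mathcal{L}_{2}\|v\|_{\sL^{\infty}(\R)}}]$. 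In particular $\partial_{3}\cX[v,\lambda](x_{0}\sk t)>0$ for all $x_{0}$, so integrating the two-sided pointwise bound over the segment from $\tilde x_{0}$ to $x_{0}$ gives \cref{eq:improved_bound_partial_3_X} in the smooth case.

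For the general case I would approximate $v$ in $\sL^{1}_{\loc}(\R)$ by smooth $v_{n}\in \sC^{1}(\R)$ with $\uv\le v_{n}\le \|v\|_{\sL^{\infty}(\R)}$ (e.g.\ by truncating a standard mollification), and $\lambda$ by smooth $\lambda_{n}$ with $\|\partial_{2}\lambda_{n}\|_{\sL^{\infty}((0,T)\sk \sL^{\infty}(\R))}\le \mathcal{L}_{2}$ and $\lambda_{n}\to \lambda$ in $\sL^{\infty}((0,T)\sk \sL^{\infty}_{\loc}(\R))$. The smooth bound applies uniformly in $n$ for $\cX[v_{n},\lambda_{n}]$, with constants depending only on $\uv$, $\|v\|_{\sL^{\infty}(\R)}$ and $\mathcal{L}_{2}$. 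Then \cref{theo:stability}, applied with fixed $x_{0},\tilde{x}_{0}$ and $(v_{n},\lambda_{n})\to (v,\lambda)$, provides $\cX[v_{n},\lambda_{n}](x_{0}\sk t)\to \cX[v,\lambda](x_{0}\sk t)$ (and similarly at $\tilde x_{0}$) for each $t\in[0,T]$, so passing to the limit in the uniform inequality yields \cref{eq:improved_bound_partial_3_X}.

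The main obstacle I anticipate is the bookkeeping in the approximation step: one must choose mollifications that simultaneously preserve the lower bound $\uv$ on $v$ (which a plain convolution already respects) and preserve or do not inflate $\|\partial_{2}\lambda\|_{\sL^{\infty}((0,T)\sk \sL^{\infty}(\R))}$, so that the limit bound retains precisely the constant $\mathcal{L}_{2}$ stated. Convolving $\lambda(t,\cdot)$ spatially with a mollifier and $\lambda(\cdot,x)$ temporally with another mollifier handles this, since spatial convolution does not increase the Lipschitz constant in $x$. Once this is arranged, the stability estimate \cref{eq:stability_x} controls the convergence of the characteristics uniformly on $[0,T]$ by the $\sL^{1}$ convergence of $v_{n}\to v$ on the bounded set $Y(x_{0},\tilde x_{0},v,\mL)$ and the $\sL^{\infty}$ convergence of $\lambda_{n}\to \lambda$ on $X(x_{0},v,\lambda)$, completing the argument.
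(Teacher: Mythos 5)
Your proposal is correct and follows essentially the same route as the paper: apply the explicit formula of \cref{rem:partial_3_X_explicit} to mollified velocities, bound the ratio $v_{\eps}(\cX)/v_{\eps}(x_{0})$ and the exponential factor by $\uv$, $\|v\|_{\sL^{\infty}(\R)}$ and $\mathcal{L}_{2}$, and then remove the smoothness by passing to the limit via the stability of $\cX$ with respect to $(v,\lambda)$. The only cosmetic differences are that the paper estimates the difference $\cX[v_{\eps},\lambda_{\eps}](x_{0}\sk t)-\cX[v_{\eps},\lambda_{\eps}](\tilde{x}_{0}\sk t)$ through $\inf_{x}\partial_{3}\cX[v_{\eps},\lambda_{\eps}](x\sk t)$ and invokes \cref{cor:ODE_approximation_smooth} for the limit, whereas you integrate the pointwise derivative bound and use \cref{theo:stability} directly with $\sL^{1}_{\loc}$ convergence of the mollified $v$ — an equally valid bookkeeping of the same argument.
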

\begin{proof}
We recall the surrogate system in \cref{eq:surrogate_system1,eq:surrogate_system2}, use the approximation result in \cref{cor:ODE_approximation_smooth} with \(v_{\eps},\lambda_{\eps}\) smoothed, and assume w.l.o.g. \(x_{0}>\tilde{x}_{0}\). Then, we can estimate for \(t\in[0,T]\)
\begin{align*}
\cX[v,\lambda](x_0\sk t)-\cX\big[v,\lambda\big](\tilde{x}_{0}\sk t)&\geq \cX[v_{\eps},\lambda_{\eps}](x_0\sk t)-\cX\big[v_{\eps},\lambda_{\eps}\big](\tilde{x}_{0}\sk t)\\
&\qquad -\Big\vert \cX[v,\lambda](x_0\sk t)-\cX\big[v_{\eps},\lambda_{\eps}\big](x_{0}\sk t)\Big\vert \\
&\qquad 
-\Big\vert \cX[v_{\eps},\lambda_{\eps}](\tilde{x}_0\sk t)-\cX\big[v,\lambda\big](\tilde{x}_{0}\sk t)\Big\vert.
\end{align*}
According to \cref{cor:ODE_approximation_smooth}, the last two terms in the latter estimate vanish if \(\eps\rightarrow 0\). So we only focus on the first term and continue -- for given \(\eps\in\R_{>0}\) -- the estimate as follows
\begin{align}
  \cX[v_{\eps},\lambda_{\eps}](x_0\sk t)-\cX\big[v_{\eps},\lambda_{\eps}\big](\tilde{x}_{0}\sk t) &\geq \inf_{x\in\R} \partial_{3}\cX[v_{\eps},\lambda_{\eps}](x\sk t)(x_{0}-\tilde{x}_{0}).\label{eq:4713}
\end{align}
As \(\cX[v_{\eps},\lambda_{\eps}]\) is the classical solution of an ODE with a smooth right hand side in space, the ``explicit'' solution formula for \(\partial_{3}\cX\) in \cref{rem:partial_3_X_explicit}, namely \cref{eq:partial_1_x_explicit_solution_formula_1}, applies and we have
\begin{align*}
    \eqref{eq:4713}&\geq (x_{0}-\tilde{x}_{0})\inf_{y\in\R} \tfrac{v_{\eps}(\cX[v_{\eps},\lambda_{\eps}](y\sk t))}{v_{\eps}(y)} \e^{\int_{0}^{t}\partial_2 \lambda_{\eps}\big(u,\cX[v_{\eps},\lambda_{\eps}](y\sk u)\big) v_{\eps}\big(\cX[v_{\eps},\lambda_{\eps}](y\sk u)\big) \dd u}\\
    &\geq (x_{0}-\tilde{x}_{0})\tfrac{\uv}{\|v\|_{\sL^{\infty}(\R)}}\exp\Big(-t \|\partial_{2}\lambda\|_{\sL^{\infty}((0,T)\sk \sL^{\infty}(\R))}\|v\|_{\sL^{\infty}(\R)}\Big).
\end{align*}
Here we have also used the fact that it holds by construction
\[
\|\partial_{2}\lambda_{\eps}\|_{\sL^{\infty}((0,T)\sk \sL^{\infty}(\R))}\leq \|\partial_{2}\lambda\|_{\sL^{\infty}((0,T)\sk \sL^{\infty}(\R))},\qquad \uv\leq v_{\eps}(x)\leq \|v\|_{\sL^{\infty}(\R)}\ \forall x\in\R\text{ a.e.}.
\]
Letting \(\eps\rightarrow 0\) we obtain the lower bound when also taking into account the previous arguments on the approximation. For the upper bound, almost the same arguments can be made.
\end{proof}
In the following example, we illustrate the obtained result by means of numerics.
\begin{example}[Numerical illustration of the results for $\cX$ and $\partial_{3}\cX$]
We consider the following data
\[
v \equiv \sgn(\sin(\pi \ast^{-1}))+2,\quad \lambda(\cdot,\ast) \in\{{\color{blue}1},\ {\color{red!80!black}1-\ast},\  {\color{yellow!60!red!70!black}\cos(2\pi \cdot)}\},\quad x_0 \in \{-1,-0.5,0\}
\]
and solve the discontinuous initial value problem in \cref{defi:disc_ODE} by an explicit Runge-Kutta scheme \cite{runge1895,kutta1901beitrag} with adaptive time stepping. As can be seen, the numerical approximations are highly accurate as the blue circles (which represent the exact value at the considered time) match the blue line.
In the different cases, the impact of the discontinuities that accumulate at \(x=0\) can be observed. The pictures on the right illustrate the finite difference approximation of \(\partial_{3}\cX\). The solid lines represent the derived bounds, and the dashed lines are the named numerical approximations. Clearly, these bounds are satisfied. It is worth mentioning that the green lines represent the bounds for the blue \textbf{and} yellow cases and that these estimates are somewhat sharp.
\begin{figure}[ht!]
    \centering
    {\includegraphics[scale=0.63,clip,trim=10 0 20 10]{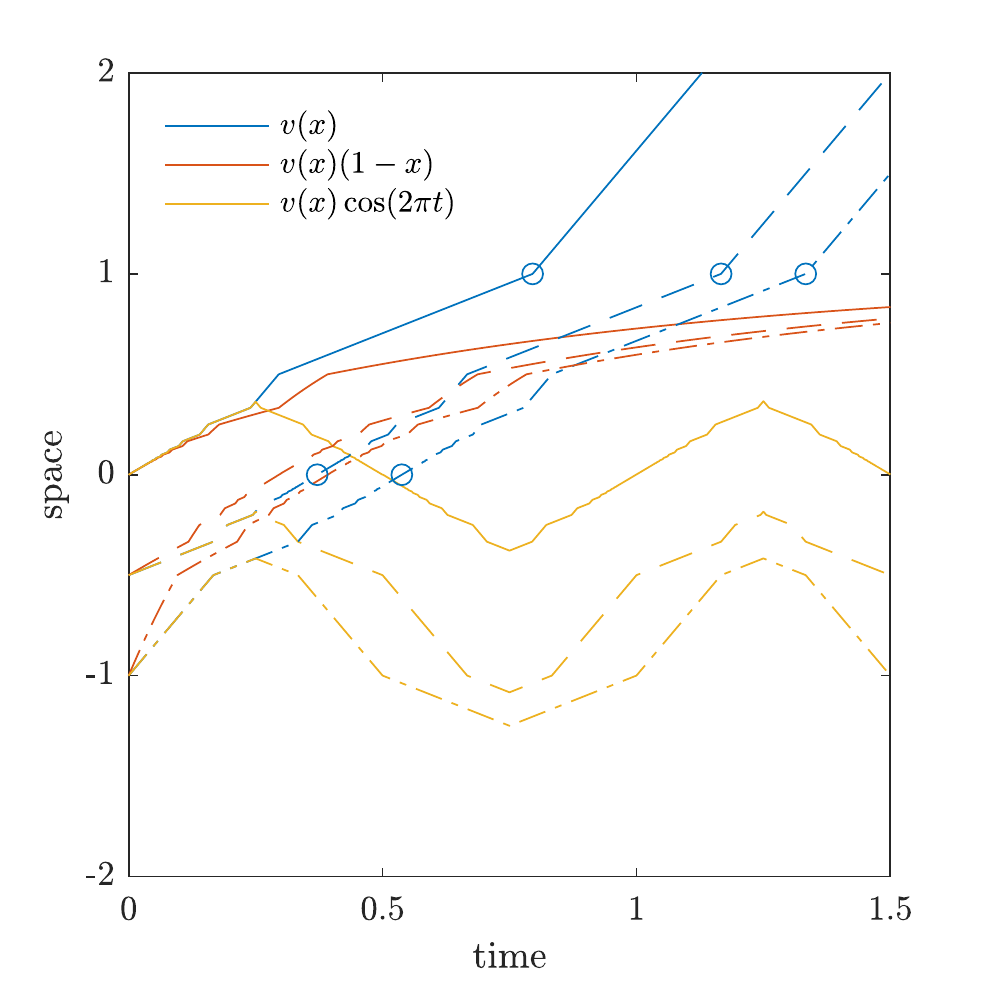}}
    {\includegraphics[scale=0.63,clip, trim=20 0 20 10]{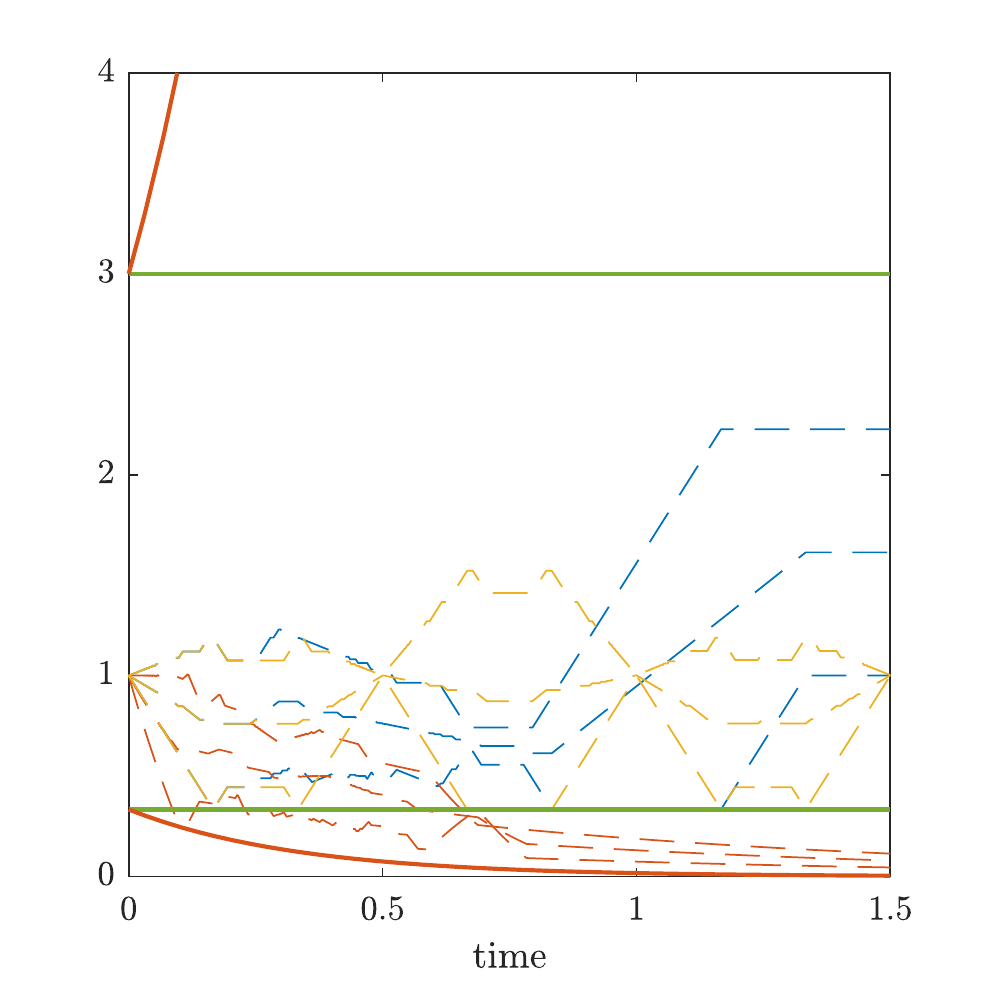}}
    \caption{\textbf{Left:} Numerical approximation of the solutions of the discontinuous IVP for $v \equiv \sgn(\sin(\pi \ast^{-1}))+2$ and $\lambda \equiv 1$ (blue) $\lambda(\cdot,\ast) \equiv 1-\ast$ (red) and $\lambda(\cdot,\ast) \equiv \cos(2\pi \cdot)$ (yellow), as well as initial datum $x_0 \in \{-1,-0.5,0\}$ (dash-dotted, dashed and solid respectively), over time. The blue circles denote analytical solutions for the autonomous and piecewise constant case, i.e. $\lambda \equiv 1,$ showing the high accuracy of the numerical approximation. \textbf{Right:} Finite difference analog to \cref{cor:upper_lower_bounds_partial_3_X} and the bounds derived therein (solid green lines for the blue and yellow case).}
    \label{fig:discontinuous_IVP}
\end{figure}
\end{example}
Though not a classical stability result, the result presented in the following \cref{rem:stability_weak} will enable us  later in \cref{sec:discontinuous_ODE} to obtain a more general approximation result when measuring the differences in the discontinuous velocities in a weak topology.
\begin{rem}[Stability of the solution for $v$ close in a weak topology]\label{rem:stability_weak}
Given the assumptions of \cref{theo:stability}, the difference in the characteristics can actually be estimated for \(t\in[0,T]\) as
\begin{equation}
\begin{aligned}
    &\big\vert \cX[v,\lambda](x_0\sk t)-\cX[\tilde{v},\tilde{\lambda}](\tilde{x}_{0}\sk t)\big\vert \\
    & \leq\|v\|_{\sL^{\infty}(\R)}\e^{t\|v\|_{\sL^{\infty}(\R)}\mathcal L_2}\int_{0}^{t}\|\lambda(s,\cdot)-\tilde{\lambda}(s,\cdot)\|_{\sL^{\infty}(X(x_{0},v,\lambda))}\dd s\\
    &\ +\Big(\|v\|_{\sL^{\infty}(\R)}\e^{t\|v\|_{\sL^{\infty}(\R)}\mathcal L_2}t\mL_{2}+1\Big)\tfrac{\|\tilde{v}\|_{\sL^{\infty}(\R)}}{\uv}\vert x_{0}-\tilde{x}_{0}\vert \\
        &\ +\Big(\|v\|_{\sL^{\infty}(\R)}\e^{t\|v\|_{\sL^{\infty}(\R)}\mathcal L_2}t\mL_{2}+1\Big)\tfrac{\|\tilde{v}\|_{\sL^{\infty}(\R)}}{\uv^2}\bigg\vert\int_{Y(x_{0},\tilde{x}_{0},v,\mL)}  v(s)-\tilde v(s)  \dd s\bigg\vert,
    \end{aligned}
    \label{eq:stability_weak}
\end{equation}
with \(Y(x_{0},\tilde{x}_{0},v,\mL),X(x_{0},v,\lambda)\) as in \cref{eq:def_Z}.
The proof consists of improving the estimate \cref{eq:123123123123}
and using this estimate in \cref{theo:stability}, in particular after \cref{eq:12341234}.

This result is significantly stronger than the result in \cref{theo:stability} as the term \(v-\tilde{v}\) goes into the estimate integrated and not in ``norm difference''. The estimate is not as canonical as a classical \(\sL^{1}\)-norm estimate, but is required -- particularly in \cref{sec:nonlocal_well_posedness} in \cref{theo:weak_stability}, a stability result with regard to the input datum.
\end{rem}

The previous stability result in \cref{theo:stability} and \cref{rem:stability_weak} enables us to approximate the discontinuous IVP by a sequence of continuous IVPs:

\begin{cor}[Approximating the discontinuous IVP by a smooth IVP]\label{cor:ODE_approximation_smooth}
Let \(v\in \sL^{\infty}\big(\R\sk \R_{\geq\uv}\big)\) and \(\lambda\in \sL^{\infty}\big((0,T)\sk \sW^{1,\infty}(\R)\big)\) be given. Take 
\begin{gather*}
\{v_{\eps}\equivd\phi_{\eps}\ast v\}_{\eps\in\R_{>0}}\!\!\subset \sC^{\infty}(\R)\cap \sL^{\infty}\big(\R\sk \R_{\geq\uv}\big),\\ \{\lambda_{\eps}\equivd\phi_{\eps}\ast \lambda(t,\cdot)\}_{\eps\in\R_{>0}}\!\!\subset \sC^{\infty}(\R)\cap \sW^{1,\infty}(\R) \ \forall t\in[0,T] \text{ a.e.}
\end{gather*}
with \(\{\phi_{\eps}\}_{\eps\in\R_{>0}}\subset C^{\infty}(\R)\) the standard mollifier as in \cite[Remark C.18, ii]{leoni}.
Then, for the solutions \(\cX\) to the corresponding discontinuous IVP as in \cref{defi:weak}, it holds
\[
\lim_{\eps \rightarrow 0}\big\|\cX[v,\lambda]-\cX[v_{\eps},\lambda_{\eps}]\big\|_{\sL^{\infty}((0,T)\sk\sL^{\infty}(\R))}=0.
\]
\end{cor}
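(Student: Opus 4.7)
The plan is to invoke the weak-topology refinement of the stability estimate, namely \cref{eq:stability_weak} in \cref{rem:stability_weak}, applied with $\tilde x_0 = x_0$, $\tilde v = v_\eps$ and $\tilde\lambda = \lambda_\eps$. Under these choices the summand proportional to $|x_0 - \tilde x_0|$ vanishes identically, and it suffices to show that the two remaining terms on the right-hand side of \cref{eq:stability_weak} tend to zero as $\eps \to 0$ uniformly in $x_0 \in \R$ and $t \in [0,T]$. Observe that all the prefactors are bounded uniformly in $\eps$, since $\uv \le v_\eps \le \|v\|_{\sL^\infty(\R)}$ and $\|\partial_2 \lambda_\eps\|_{\sL^\infty((0,T);\sL^\infty(\R))} \le \mL_2$ by the standard Young-type properties of mollification.

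For the Lipschitz-part contribution, since $\lambda(s,\cdot) \in \sW^{1,\infty}(\R)$ is Lipschitz in $x$ with constant at most $\mL_2 := \|\partial_2 \lambda\|_{\sL^\infty((0,T);\sL^\infty(\R))}$ for a.e.\ $s$, the classical mollifier estimate yields
$$\big\|\lambda_\eps(s,\cdot) - \lambda(s,\cdot)\big\|_{\sL^\infty(\R)} \le \mL_2 \eps \int_\R |z|\, \phi(z) \dd z \qquad \text{for a.e.\ } s \in (0,T),$$
a bound independent of $x_0$, so the first summand of \cref{eq:stability_weak} is $O(\eps)$ uniformly in $(t,x_0)$. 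For the discontinuous-part contribution, introduce the antiderivative $V(x) := \int_0^x v(y) \dd y$, which is globally Lipschitz with constant $\|v\|_{\sL^\infty(\R)}$. Its mollification $V_\eps := \phi_\eps \ast V$ is smooth and satisfies $V_\eps' = \phi_\eps \ast v = v_\eps$, and the same Lipschitz-mollifier bound gives $\|V - V_\eps\|_{\sL^\infty(\R)} \le \|v\|_{\sL^\infty(\R)} \eps \int_\R |z|\, \phi(z) \dd z$. Hence for any bounded interval $(a,b) \subset \R$,
$$\bigg|\int_a^b \big(v(s) - v_\eps(s)\big) \dd s\bigg| = \big|(V - V_\eps)(b) - (V - V_\eps)(a)\big| \le 2\|V - V_\eps\|_{\sL^\infty(\R)} = O(\eps),$$
uniformly in $a,b \in \R$. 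Specialised to $Y(x_0, x_0, v, \mL)$, whose length $2T\|v\|_{\sL^\infty(\R)} \mL$ is independent of $x_0$, this controls the third summand of \cref{eq:stability_weak} uniformly in $x_0 \in \R$. Combining the two bounds shows that the whole right-hand side of \cref{eq:stability_weak} is $O(\eps)$ uniformly in $(t,x_0) \in [0,T] \times \R$, which is exactly the claimed convergence in $\sL^\infty((0,T); \sL^\infty(\R))$.

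The main obstacle, and precisely the reason \cref{rem:stability_weak} rather than the plain estimate \cref{eq:stability_x} from \cref{theo:stability} must be used here, is that for a generic $v \in \sL^\infty(\R)$ the $\sL^1_{\loc}$-convergence $v_\eps \to v$ need \emph{not} be uniform over translations: one can construct $v$ with oscillations of growing frequency at infinity so that $\|v - v_\eps\|_{\sL^1((x_0 - R, x_0 + R))}$ fails to decay uniformly as $x_0 \to \infty$. The weak formulation circumvents this by transferring the problem onto the Lipschitz antiderivative $V$, for which uniform convergence $V_\eps \to V$ is automatic.
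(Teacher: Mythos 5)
Your proposal is correct and follows essentially the paper's own route: both apply the weak stability estimate \cref{eq:stability_weak} of \cref{rem:stability_weak} with \(\tilde{x}_{0}=x_{0}\), \(\tilde{v}=v_{\eps}\), \(\tilde{\lambda}=\lambda_{\eps}\), note that the prefactors are \(\eps\)-uniform since mollification preserves the bounds \(\uv\leq v_{\eps}\leq\|v\|_{\sL^{\infty}(\R)}\) and \(\|\partial_{2}\lambda_{\eps}\|\leq\mL_{2}\), and reduce the claim to the uniform smallness of \(\sup_{y}\big\vert\int(v-v_{\eps})\big\vert\) and of \(\|\lambda-\lambda_{\eps}\|_{\sL^{\infty}}\). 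The only cosmetic difference is that you establish the first of these via the Lipschitz antiderivative of \(v\) and the standard mollifier estimate, whereas the paper derives the same \(O(\eps\|v\|_{\sL^{\infty}(\R)})\) bound by a direct Fubini-type manipulation of \(\int_{0}^{y}(v\ast\phi_{\eps}-v)\); both yield the uniformity in \(x_{0}\) that, as you rightly point out, the plain \(\sL^{1}\)-norm estimate of \cref{theo:stability} would not provide.
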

\begin{proof}
Thanks to the used mollifier we have that 
\begin{equation}
\lim_{\eps\rightarrow 0}\sup_{y\in\R}\Big\vert\int_{0}^{y}v_{\eps}(x)-v(x)\dd x\Big\vert=0\text{ and }  \lim_{\eps\rightarrow 0}\|\lambda_{\eps}-\lambda\|_{\sL^{\infty}((0,T)\sk\sL^{\infty}(\R))}\label{eq:a_nice_one}
\end{equation} (for the precise result see \cite[Theorem 13.9 \& Remark 13.11]{leoni}). 
However, the first approximation result needs to be detailed. Let \(y\in\R\) be given. We can estimate for \(\eps\in\R_{>0}\)
\begin{align*}
    &\Big\vert\int_{0}^{y}\big(v\ast \phi_{\eps}\big)(x)-v(x)\dd x\Big\vert
    =\Big\vert\int_{-\eps}^{\eps}\phi_{\eps}(z)\Big(\int_{z}^{z+y}\!\!v(x)\dd x -\int_{0}^{y}\!\!v(x)\dd x\Big)\dd z\Big\vert\\
    &\ =\Big\vert\int_{-\eps}^{\eps}\!\!\!\!\phi_{\eps}(z)\Big(\!-\!\!\int_{0}^{z}\!\!\!\!\!\!v(x)\dd x +\int_{y}^{z+y}\!\!\!\!\!\!\!\!v(x)\dd x\Big)\dd z\Big\vert
    \leq 2\!\!\int_{-\eps}^{\eps} \!\!\phi_{\eps}(z)\vert z\vert\|v\|_{\sL^{\infty}(\R)}\dd z\leq 2\eps\|v\|_{\sL^{\infty}(\R)}.
\end{align*}
As this is uniform in \(y\) we indeed obtain the claim in \cref{eq:a_nice_one}.
Thus, we can recall \cref{rem:stability_weak} and obtain for given \(x_{0}\in\R\) and \(\tilde{x}_{0}=x_{0}\) and \(t\in[0,T]\)
\begin{equation}
\begin{aligned}
    &\big\vert \cX[v,\lambda](x_0\sk t)-\cX[v_{\eps},\lambda_{\eps}](x_{0}\sk t)\big\vert \\
    & \leq\|v\|_{\sL^{\infty}(\R)}\e^{t\|v\|_{\sL^{\infty}(\R)}\mathcal L_2}\int_{0}^{t}\|\lambda(s,\cdot)-\lambda_{\eps}(s,\cdot)\|_{\sL^{\infty}(X(x_{0},v,\lambda))}\dd s\\
    &\ +\Big(\|v\|_{\sL^{\infty}(\R)}\e^{t\|v\|_{\sL^{\infty}(\R)}\mathcal L_2}t\mL_{2}+1\Big)\tfrac{\|v_{\eps}\|_{\sL^{\infty}(\R)}}{\uv^2}\bigg\vert\int_{Y(x_{0},x_{0},v,\mL)}  v(s)-\tilde v_{\eps}(s)  \dd s\bigg\vert,
    \end{aligned}
\end{equation}
with
\begin{align*}
Y(x_{0},x_{0},v,\mL)&=x_0 + T\|v\|_{\sL^{\infty}(\R)}\mL(-1,1)\\
X(x_{0},v,\lambda)&= x_{0}+ T\|v\|_{\sL^\infty(\R)}\|\lambda\|_{\sL^{\infty}((0,T)\sk \sL^{\infty}(\R))}\cdot(-1,1) \subset \R
\end{align*}
as in \cref{defi:X}.
Making this uniform in \(x_{0}\) and \(t\), we indeed obtain
\begin{align*}
    &\big\|\cX[v,\lambda]-\cX[v_{\eps},\lambda_{\eps}]\big\|_{\sL^{\infty}((0,T);\sL^{\infty}(\R))}\\
    &\leq \|v\|_{\sL^{\infty}(\R)}\e^{T\|v\|_{\sL^{\infty}(\R)}\mathcal L_2}T\|\lambda-\lambda_{\eps}\|_{\sL^{\infty}((0,T);\sL^{\infty}(X(x_{0},v,\lambda)))} \\
                &\quad+ \Big(\|v\|_{\sL^{\infty}(\R)}\e^{T\|v\|_{\sL^{\infty}(\R)}\mathcal L_2}T\mL_{2}+1\Big)\tfrac{\|v\|_{\sL^{\infty}(\R)}}{\uv^2}\sup_{y\in\R}\bigg\vert\int_{y-T\|v\|_{\sL^{\infty}(\R)}\mL}^{y+T\|v\|_{\sL^{\infty}(\R)}\mL}\hspace{-5pt} v(s)-\tilde v_{\eps}(s)  \dd s\bigg\vert.
\end{align*}
For \(\eps\rightarrow 0\), the last two terms go to zero thanks to \cref{eq:a_nice_equation}. Thus, we have shown the claimed approximation result.
\end{proof}
In the following remark we explain why the result in \cref{theo:stability} needs to be strengthened by a regularity result connecting the dependency of \(\cX\) with the initial datum and the considered time.
\begin{rem}[{$\partial_{3}\cX[v,\lambda](\cdot,t)$} as a function]\label{rem:partial_3_X}
Thanks to \cref{theo:stability}, the mapping \(y\mapsto \partial_{y}\cX[v,\lambda](y,t)\) is well-defined for each \(t\in[0,T]\) in the sense that we have by \cref{theo:stability} that \(\cX[v,\lambda](y,t)\) is Lipschitz with regard to \(y\). Thus \(\forall t\in[0,T]\) it holds that
\[
\cX[v,\lambda](\cdot,t)\in \sW^{1,\infty}_{\loc}(\R): \partial_{3}\cX[v,\lambda](\cdot,t)\in \sL^{\infty}(\R)
\]
(also compare \cref{rem:partial_3_X_explicit}).
\end{rem}
However, the regularity discussed in \cref{rem:partial_3_X} does not tell us anything about how the solution changes over time. As we later require a continuity in time, the most natural choice in space for this continuity to hold is \(\sL^{1}_{\loc}\). Then, the continuity can be proven under no additional assumptions on the discontinuous velocity. This is detailed and shown in the \cref{subsec:time_continuity_partial_2}, and in particular in \cref{lem:stability_C_L_1}.
\subsection{Time-continuity/properties of the derivative of the solutions with respect to the initial datum in \texorpdfstring{\(\sL^{1}_{\loc}(\R)\)}{L1}}\label{subsec:time_continuity_partial_2}
As previously mentioned, in this section we study the regularity of \(\partial_{3}\cX\) in time when measuring space in \(\sL^{1}_{\loc}(\R)\). This is particularly important for our later analysis of the discontinuous nonlocal conservation law in \cref{defi:disc_conservation_law} (see \cref{sec:nonlocal_well_posedness}).
Before detailing the claimed continuity, we require a convergence result for a composition of functions in \(\sL^{1}_{\loc}(\R)\) with locally Lipschitz-continuous functions.
\begin{lem}[Convergence of a composition of sequences of functions]\label{lem:convergence_composition_L_1}
Let \(\big\{f_{\eps}\big\}_{\eps\in\R_{>0}}\subset\sL^{1}_{\loc}(\R)\ni f\) be given, as well as \(\{g_{\eps}\}_{\eps\in\R_{>0}}\subset W^{1,\infty}_{\loc}(\R)\ni g\) so that \(\exists C\in\R_{>0}\ \forall \eps\in\R_{>0}:\ C\leq g'_{\eps}\). Then, it holds that
\begin{equation}
\lim_{\eps\rightarrow 0}\|f_{\eps}-f\|_{\sL^{1}_{\loc}(\R)}=0=\lim_{\eps\rightarrow 0}\|g_{\eps}-g\|_{\sL^{\infty}(\R)} 
 \ \implies \
    \lim_{\eps\rightarrow 0}\|f_{\eps}\circ g_{\eps}-f\circ g\|_{\sL^{1}_{\loc}(\R)}=0. \label{eq:convergence_composition_assumption}
\end{equation}
\end{lem}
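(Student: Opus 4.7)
The plan is to fix an arbitrary compact \(K\subset\R\) and split the difference via the triangle inequality as
\begin{align*}
\|f_{\eps}\circ g_{\eps}-f\circ g\|_{\sL^{1}(K)}\leq \|f_{\eps}\circ g_{\eps}-f\circ g_{\eps}\|_{\sL^{1}(K)}+\|f\circ g_{\eps}-f\circ g\|_{\sL^{1}(K)},
\end{align*}
and to show that each summand vanishes as \(\eps\rightarrow 0\). The key structural ingredient is that the uniform lower bound \(g'_{\eps}\geq C\) makes each \(g_{\eps}\) strictly monotone with a globally \((1/C)\)-Lipschitz inverse on its image, and that by taking pointwise limits the same bound \(g'\geq C\) a.e.\ follows for \(g\). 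Moreover, the uniform convergence \(g_{\eps}\rightarrow g\) in \(\sL^{\infty}(\R)\) guarantees that \(g_{\eps}(K)\) and \(g(K)\) are contained in a single compact set \(K''\subset \R\) for all \(\eps\) sufficiently small.

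For the first term I would apply the change of variables \(y=g_{\eps}(x)\), which by the lower bound on \(g'_{\eps}\) produces
\begin{align*}
\int_{K}\vert f_{\eps}(g_{\eps}(x))-f(g_{\eps}(x))\vert\dd x\leq \tfrac{1}{C}\int_{g_{\eps}(K)}\vert f_{\eps}(y)-f(y)\vert\dd y\leq \tfrac{1}{C}\|f_{\eps}-f\|_{\sL^{1}(K'')},
\end{align*}
which tends to zero by the \(\sL^{1}_{\loc}\) convergence hypothesis. This step is routine but it is crucial that the Jacobian constant \(1/C\) is uniform in \(\eps\), so no bound blows up in the limit.

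For the second term I would rely on the density of \(\sC_{c}(\R)\) in \(\sL^{1}_{\loc}(\R)\): given \(\eta\in\R_{>0}\), choose \(\tilde{f}\in \sC_{c}(\R)\) with \(\|f-\tilde{f}\|_{\sL^{1}(K'')}<\eta\) and insert \(\tilde f\circ g_{\eps}\) and \(\tilde f\circ g\) by the triangle inequality. The two differences \(f(g_{\eps})-\tilde f(g_{\eps})\) and \(\tilde f(g)-f(g)\) are each controlled, via the same change-of-variables argument (using \(g'_{\eps}\geq C\) and \(g'\geq C\)), by \(\eta/C\). The remaining term \(\int_{K}\vert\tilde f(g_{\eps}(x))-\tilde f(g(x))\vert\dd x\) vanishes in the limit by the uniform continuity of \(\tilde{f}\) on \(K''\) combined with \(g_{\eps}\rightarrow g\) uniformly. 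Since \(\eta\) is arbitrary, the second term goes to zero as well.

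The main obstacle I expect is the bookkeeping around the compact sets: one must check carefully that \(g_{\eps}(K)\) stabilizes inside a common compact neighborhood of \(g(K)\), so that the \(\sL^{1}_{\loc}\) hypothesis on \(f_{\eps}\) and the density argument can be applied on a fixed compact. Apart from this, the argument is a textbook combination of the substitution formula and \(\sC_{c}\)-density, with the lower bound \(C\) playing the role of a uniform control on the Jacobian.
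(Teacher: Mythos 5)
Your proof is correct and follows essentially the same route as the paper's: a triangle-inequality splitting, a continuous approximant of \(f\) (you use \(\sC_{c}\)-density where the paper uses a mollification \(f\ast\psi_{\delta}\)), the change of variables with the uniform Jacobian bound \(1/C\) for the terms composed with \(g_{\eps}\) (and with \(g\)), and the uniform convergence \(g_{\eps}\rightarrow g\) for the term involving the continuous approximant. Your extra bookkeeping — that \(g'\geq C\) passes to the limit and that \(g_{\eps}(K)\) stays inside a fixed compact set — is precisely the implicit content behind the paper's brief argument, so nothing is missing.
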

\begin{proof}
Let \(X\subset \R\) be open and bounded. With \(\big\{\psi_{\delta}\big\}_{\delta\in\R_{>0}}\subset\sC^{\infty}(\R)\) we estimate the standard mollifier as in \cite[Remark C.18, ii]{leoni} as follows:
\begin{align*}
    &\|f_{\eps}\circ g_{\eps}-f\circ g\|_{\sL^{1}_{\loc}(X)}\\
    &\leq \|f_{\eps}\circ g_{\eps}-f\circ g_{\eps}\|_{\sL^{1}_{\loc}(X)}+\|f\circ g_{\eps}-(f\ast \psi_\delta)\circ g_{\eps}\|_{\sL^{1}_{\loc}(X)}\\
    &\qquad + \|(f\ast \psi_\delta)\circ g_{\eps}-(f\ast \psi_\delta)\circ g\|_{\sL^{1}_{\loc}(X)} + \|(f\ast \psi_\delta)\circ g-f\circ g\|_{\sL^{1}_{\loc}(X)}.
\end{align*}
Letting \(\eps\rightarrow 0\), the first term (due to the uniform Lipschitz bound of \(g_{\eps}\) from below) and the third term by the dominated convergence \cite[2.24 The Dominated Convergence Theorem]{folland} converge to zero. The second and fourth term vanish for \(\delta\rightarrow 0\) as \(\psi_{\delta}\) was a standard mollifier.
\end{proof}

In the following \cref{lem:stability_C_L_1}, we present the main result of this section, the continuity of \(\partial_{3}\cX\) in time when measuring space in \(\sL^{1}_{\loc}\). To this end, we take advantage of the derived solutions formula in \cref{rem:partial_3_X_explicit} for smooth velocities and the stability of solutions \(\cX\) with regard to different velocities (see \cref{theo:stability}).
\begin{prop}[Stability spatial derivative of $\cX$ in time assuming $\sTV_{\loc}$ regularity in $v$]\label{lem:stability_C_L_1}
Given \(x_{0}\in\R\) and \((v,\lambda)\in\sL^{\infty}(\R\sk \R_{\geq \uv})\times\sL^{\infty}\big((0,T)\sk \sW^{1,\infty}(\R)\big)\) as in \cref{ass:input_datum},

then, for \(\cX[v,\lambda](x_0\sk \cdot)\) as in \cref{defi:weak}, it holds that
\[
[0,T]\ni t \mapsto \Big( \R\ni y \mapsto \partial_{y}\cX[v,\lambda](y,t) \Big)\in \sC\big([0,T]\sk \sL^{1}_{\loc}(\R)\big),
\]
i.e.\ 
\[
\lim_{t\rightarrow\tilde{t}}\big\|\partial_{3}\cX[v,\lambda](\cdot;t)-\partial_{3}\cX[v,\lambda](\cdot;\tilde{t})\big\|_{\sL^{1}_{\loc}(\R)}=0.
\]
In addition, if we have
\[
v\in \sTV_{\loc}(\R) :\Longleftrightarrow \vert v\vert _{\sTV(\Omega)}<\infty\qquad \forall \Omega\subset\R \text{ compact},
\]
we obtain Lipschitz-continuity in time, i.e., the following estimate holds \(\forall (t,\tt)\in[0,T]^{2}\) and \(\forall X\subset\R\) open and bounded:
\begin{align*}
    &\|\partial_{3}\cX[v,\lambda](\cdot\sk t)-\partial_{3}\cX[v,\lambda](\cdot\sk \tt)\|_{\sL^{1}(X)}\\
    &\leq \tfrac{\vert t-\tilde{t}\vert\|v\|_{\sL^{\infty}(\R)}^{2}\e^{T\mathcal{L}_{2}\|v\|_{\sL^{\infty}(\R)}}}{\uv}\bigg(\mathcal{L}_{2}\vert X\vert+\tfrac{\|v\|_{\sL^{\infty}(\R)}^{2}\|\mathcal{L}\|}{\uv}\e^{T\mathcal{L}_{2}\|v\|_{\sL^{\infty}(\R)}}\vert v\vert_{\sTV\left( X+\|v\|_{\sL^{\infty}(\R)}T\mL(-1,1)\right)}\bigg),
    \end{align*}
    with
    \begin{align}
    \mL&\: \|\lambda\|_{\sL^{\infty}((0,T)\sk \sL^{\infty}(\R))},\qquad \mL_{2}\: \|\partial_{2}\lambda\|_{\sL^{\infty}((0,T)\sk \sL^{\infty}(\R))}.
 \end{align}  
\end{prop}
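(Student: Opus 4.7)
The plan is to first establish the estimate in the smooth-data case using the explicit representation of $\partial_3\cX$ from \cref{rem:partial_3_X_explicit}, then transfer the bound to general $(v,\lambda)$ via the mollification result \cref{cor:ODE_approximation_smooth} together with the composition convergence \cref{lem:convergence_composition_L_1}. The continuity claim for arbitrary $v\in\sL^\infty$ will follow from a further density step built on the Lipschitz estimate for $v\in\sTV_{\loc}$.

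Assume first $v\in\sC^1(\R)$ and $\lambda\in\sC^1([0,T]\times\R)$. By \cref{eq:partial_1_x_explicit_solution_formula_1},
\[
\partial_3\cX(y\sk t)=\tfrac{v(\cX(y\sk t))}{v(y)}\,e^{E(y,t)},\qquad E(y,t):=\int_0^t\partial_2\lambda(s,\cX(y\sk s))\,v(\cX(y\sk s))\,ds.
\]
For $\tilde t<t$ in $[0,T]$, I would split
\[
v(y)\bigl(\partial_3\cX(y\sk t)-\partial_3\cX(y\sk\tilde t)\bigr)=v(\cX(y\sk t))\bigl(e^{E(y,t)}-e^{E(y,\tilde t)}\bigr)+\bigl(v(\cX(y\sk t))-v(\cX(y\sk\tilde t))\bigr)e^{E(y,\tilde t)}.
\]
The first summand is controlled pointwise using $|E(y,t)-E(y,\tilde t)|\leq\mL_2\|v\|_{\sL^\infty(\R)}|t-\tilde t|$ and the uniform bound $e^{|E|}\leq e^{T\mL_2\|v\|_{\sL^\infty(\R)}}$; after dividing by $v(y)\geq\uv$ and integrating over $X$, it contributes precisely the $\mL_2|X|$ piece of the target estimate. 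For the second summand, the ODE identity gives $|v(\cX(y\sk t))-v(\cX(y\sk\tilde t))|\leq\|v\|_{\sL^\infty(\R)}\mL\int_{\tilde t}^t|v'(\cX(y\sk s))|\,ds$. Swapping the $y$- and $s$-integrations by Fubini and applying the change of variables $z=\cX(y\sk s)$, whose Jacobian is bounded below by $\tfrac{\uv}{\|v\|_{\sL^\infty(\R)}}e^{-T\mL_2\|v\|_{\sL^\infty(\R)}}$ thanks to \cref{cor:upper_lower_bounds_partial_3_X}, together with the inclusion $\cX(X\sk s)\subset X+T\|v\|_{\sL^\infty(\R)}\mL\cdot(-1,1)$, the inner integral collapses to the $\sTV$-seminorm of $v$ on the enlarged interval, producing the second piece of the target bound.

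For general $(v,\lambda)$ satisfying \cref{ass:input_datum} with $v\in\sTV_{\loc}$, I would apply the smooth estimate to the mollifications $(v_\eps,\lambda_\eps)$ from \cref{cor:ODE_approximation_smooth}; the constants are $\eps$-uniform since $|v_\eps|_{\sTV(\Omega)}\leq|v|_{\sTV(\Omega+(-\eps,\eps))}$, $\uv\leq v_\eps\leq\|v\|_{\sL^\infty(\R)}$ and $\|\partial_2\lambda_\eps\|_{\sL^\infty}\leq\mL_2$. To pass to the limit I need $\partial_3\cX_\eps(\cdot\sk t)\to\partial_3\cX(\cdot\sk t)$ in $\sL^1_{\loc}$ for each fixed $t$, which I would obtain from the explicit formula by applying \cref{lem:convergence_composition_L_1} to $v_\eps\circ\cX_\eps(\cdot\sk t)$: the uniform lower bound on $\partial_y\cX_\eps(\cdot\sk t)$ required by that lemma is supplied by \cref{cor:upper_lower_bounds_partial_3_X}, and the uniform convergence $\cX_\eps\to\cX$ comes from \cref{cor:ODE_approximation_smooth}. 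For the continuity claim without the $\sTV$ assumption, I would mollify $v$ to obtain $v_\eps\in\sTV_{\loc}$ (so that each $t\mapsto\partial_3\cX_\eps(\cdot\sk t)$ is Lipschitz in $\sL^1_{\loc}$ by what was just proved), and then upgrade the pointwise-in-$t$ convergence $\partial_3\cX_\eps(\cdot\sk t)\to\partial_3\cX(\cdot\sk t)$ to uniformity in $t\in[0,T]$ by the same composition argument; continuity is then inherited from a uniform limit of continuous maps.

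The main obstacle is the Fubini/change-of-variables step that converts $\int_X|v'(\cX(y\sk s))|\,dy$ into a $\sTV$-seminorm on the enlarged interval, since it hinges on the sharp two-sided Lipschitz estimate of \cref{cor:upper_lower_bounds_partial_3_X}. The complementary subtlety is passing to the limit in the composition $v_\eps(\cX_\eps(\cdot\sk t))$ as $\eps\to 0$, where neither factor is continuous in the obvious sense; here \cref{lem:convergence_composition_L_1}, combined with the uniform bi-Lipschitz control on $\cX_\eps(\cdot\sk t)$, is what makes the identification of the limit work.
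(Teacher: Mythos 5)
Your smooth-data computation is sound and is essentially the same decomposition the paper uses: the exponential-difference term yields the \(\mL_{2}\vert X\vert\) contribution, and the term \(v(\cX(y\sk t))-v(\cX(y\sk\tilde t))\) is converted into \(\vert v\vert_{\sTV}\) on the enlarged interval via the bi-Lipschitz bounds of \cref{cor:upper_lower_bounds_partial_3_X} (your Fubini/change-of-variables variant even gives a slightly better constant than the stated one, which is harmless). The genuine gap is in the transfer to nonsmooth data. You claim strong convergence \(\partial_{3}\cX[v_{\eps},\lambda_{\eps}](\cdot\sk t)\rightarrow\partial_{3}\cX[v,\lambda](\cdot\sk t)\) in \(\sL^{1}_{\loc}\) ``from the explicit formula by applying \cref{lem:convergence_composition_L_1} to \(v_{\eps}\circ\cX_{\eps}(\cdot\sk t)\)'', but the formula \cref{eq:partial_1_x_explicit_solution_formula_1} has a second factor, \(\exp\big(\int_{0}^{t}\partial_{2}\lambda_{\eps}(s,\cX_{\eps}(y\sk s))v_{\eps}(\cX_{\eps}(y\sk s))\dd s\big)\), which your argument never controls: its limit is not identified, and the limiting expression itself is exactly what \cref{rem:partial_3_X_explicit} warns may be ill-defined for \(v,\partial_{2}\lambda\in\sL^{\infty}\) (e.g.\ on time intervals where a characteristic is constant the composition is representative-dependent). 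Identifying the limit with \(\partial_{3}\cX[v,\lambda]\) would amount to asserting the explicit formula for nonsmooth data, which is not available. The paper avoids this entirely by transferring at the level of finite-difference quotients \(\tfrac{x(z+h\sk t)-x(z\sk t)}{h}\): for fixed \(h\) only \(\|\cX_{\eps}-\cX\|_{\sL^{\infty}}\rightarrow0\) (\cref{cor:ODE_approximation_smooth}) and the composition lemma for \(v_{\eps}\circ x_{\eps}\) are needed, the exponentials being merely bounded, and \(h\rightarrow0\) is taken afterwards.

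This gap propagates to your treatment of the two conclusions. For the Lipschitz bound (\(v\in\sTV_{\loc}\)) your scheme can be repaired without strong convergence, since \(\partial_{3}\cX_{\eps}(\cdot\sk t)\overset{*}{\rightharpoonup}\partial_{3}\cX(\cdot\sk t)\) in \(\sL^{\infty}\) follows from uniform convergence of \(\cX_{\eps}\) plus the uniform Lipschitz bounds, and weak lower semicontinuity of the \(\sL^{1}(X)\)-norm then passes the \(\eps\)-uniform estimate to the limit; but you did not argue this. For the continuity statement with general \(v\in\sL^{\infty}\) the proposal does not go through as written: the Lipschitz constants of the approximations contain \(\vert v_{\eps}\vert_{\sTV}\), which blows up as \(\eps\rightarrow0\) when \(v\notin\sTV_{\loc}\), so there is no equicontinuity in \(t\), and the ``upgrade to uniformity in \(t\) by the same composition argument'' presupposes precisely the (unproven) strong convergence discussed above. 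The paper instead first derives the intermediate inequality \cref{eq:partial_2_xi_continuity}, valid for every \(v\in\sL^{\infty}(\R\sk\R_{\geq\uv})\), and obtains continuity by applying \cref{lem:convergence_composition_L_1} to \(v\circ\cX(\cdot\sk t)\) as \(t\rightarrow\tilde t\) (using the uniform lower bound on \(\partial_{3}\cX\)), reserving the re-smoothing of \(v\) only for the \(\sTV\) case. You need either that intermediate inequality or an equivalent device to cover the non-\(\sTV\) part of the statement.
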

\begin{proof}
We show the claim by a finite difference approximation in the initial value and approximate the discontinuous velocity \(v\) by a smoothed velocity, as well as the Lipschitz-continuous velocity \(\lambda\) by a smoothed velocity. Now, let \((\eps,h)\in\R_{>0}^{2}\) be given, 
in the notation suppress the dependency of \(x\) with regard to the velocities, i.e., for now write (recall \cref{defi:weak})
\[
x \equivd \cX[v,\lambda]\quad \text{ and }\quad \xe \equivd \cX[\ve,\lambda_{\eps}] \quad \text{ on } \R\times(0,T).
\] Consider for \(X\subset\R\) compact and  \((t,\tilde{t})\in[0,T]^2\):
\begin{align}
\|\partial_{1}x(\cdot\sk\tilde{t})-\partial_{1}x(\cdot\sk t)\|_{\sL^{1}(X)}&=\int_{X}\lim_{h\rightarrow 0} \bigg\vert  \tfrac{x(z+h\sk \tt)-x(z\sk \tt)}{h} - \tfrac{x(z+h\sk t)-x(z\sk t)}{h}\bigg\vert \dd z \label{eq:partial_2_xi_t_tilde_t}\\
&=\lim_{h\rightarrow 0}\int_{X} \bigg\vert  \tfrac{x(z+h\sk \tt)-x(z\sk \tt)}{h} - \tfrac{x(z+h\sk t)-x(z\sk t)}{h}\bigg\vert \dd z,\label{eq:123321123321}
\end{align}
where changing the order of the limit with the integration follows by the dominated convergence theorem \cite[2.24 The Dominated Convergence Theorem]{folland}. Adding several zeros -- the smoothed version of the previous terms -- we estimate
\begin{align}
\eqref{eq:123321123321}& \leq \lim_{h\rightarrow 0}\tfrac{1}{h}\Big(\|x(\cdot+h\sk \tt)-x_{\eps}(\cdot+h\sk \tt)\|_{\sL^1(X)} +\|x(\cdot\sk \tt)-x_{\eps}(\cdot\sk \tt)\|_{\sL^1(X)}\\
   &\quad +\|x(\cdot+h\sk t)-x_{\eps}(\cdot+h\sk t)\|_{\sL^1(X)}+\|x(\cdot\sk t)-x_{\eps}(\cdot\sk t)\|_{\sL^1(X)}\Big) \\ 
    &\quad + \lim_{h\rightarrow 0}\int_{X} \bigg\vert  \tfrac{x_{\eps}(z+h\sk \tt)-x_{\eps}(z\sk \tt)}{h} - \tfrac{x_{\eps}(z+h\sk t)-x_{\eps}(z\sk t)}{h}\bigg\vert \dd z.
\end{align}
Concentrating on the last term, we have for \(h\in\R_{>0},\ h\leq 1\) and by defining  $g_\eps(\cdot,\ast)\: \partial_2 \lambda_\eps(\cdot,\ast)v_\eps(\ast)$ fixed:
\begin{align}
    &\int_{X} \bigg\vert  \tfrac{x_{\eps}(z+h\sk \tt)-x_{\eps}(z\sk \tt)}{h} - \tfrac{x_{\eps}(z+h\sk t)-x_{\eps}(z\sk t)}{h}\bigg\vert \dd z \label{eq:a_nice_equation}\\
    &\overset{\eqref{eq:partial_1_x_explicit_solution_formula_1}}{=}\tfrac{1}{h}\int_{X}\bigg\vert \int_0^h \tfrac{v_{\eps}(x_{\eps}(z+s\sk t))}{v_{\eps}(z+s)} \e^{\int_{0}^{t}g_{\eps}(u,x_{\eps}(z+s\sk u))  \dd u}\notag\\
    &\qquad\qquad\qquad\qquad -  \tfrac{v_{\eps}(x_{\eps}(z+s\sk \tilde{t}))}{v_{\eps}(z+s)} \e^{\int_{0}^{\tilde{t}}g_{\eps}(u,x_{\eps}(z+s\sk u))  \dd u}\dd s \bigg\vert \dd z\notag \\
    &\ \leq\tfrac{1}{h\uv}\int_{X}\int_0^h \Big\vert  v_{\eps}(x_{\eps}(z+s\sk t)) \e^{\int_{0}^{t}g_{\eps}(u,x_{\eps}(z+s\sk u))  \dd u} \notag\\
    &\qquad\qquad\qquad\qquad-  v_{\eps}(x_{\eps}(z+s\sk \tilde{t}))\e^{\int_{0}^{\tilde{t}}g_{\eps}(u,x_{\eps}(z+s\sk u))  \dd u}\Big\vert \dd s \dd z,\label{eq:12331231231231233}
    \end{align}
where the last two estimates used the identity in \cref{rem:partial_3_X_explicit} for smooth data and the fact that \(\uv\leq v_{\eps}(x)\ \forall \eps\in\R_{>0}, \forall x\in\R\) a.e.. Again recalling that it holds 
\[\|v_{\eps}\|_{\sL^{\infty}(\R)}\leq \|v\|_{\sL^{\infty}(\R)}\ \text{as well as}\ \partial_{2}\lambda_{\eps}\|_{\sL^{\infty}((0,T)\sk \sL^{\infty}(\R))}\leq\partial_{2}\lambda\|_{\sL^{\infty}((0,T)\sk \sL^{\infty}(\R))}\]
 and once more adding zeros yields
\begin{align*}
    \eqref{eq:12331231231231233}&\leq\tfrac{\|v\|_{\sL^{\infty}(\R)}}{h\uv}\e^{T\|\partial_2 \lambda\|_{\sL^\infty}\|v\|_{\sL^{\infty}(\R)}}\int_{X}\int_0^h \Big\vert  \int_{\tilde{t}}^{t}g_{\eps}(u,x_{\eps}(z+s\sk u))  \dd u\Big\vert \dd s \dd z\\
    &\qquad +  \tfrac{\e^{T\|\partial_2 \lambda\|_{\sL^\infty}\|v\|_{\sL^{\infty}(\R)}}}{h\uv}\int_{X}\int_0^h \Big\vert  v_{\eps}(x_{\eps}(z+s\sk t))  -  v_{\eps}(x_{\eps}(z+s\sk \tilde{t}))\Big\vert \dd s \dd z\\
    &\leq\tfrac{\|v\|_{\sL^{\infty}(\R)}}{\uv}\e^{T\|\partial_2 \lambda\|_{\sL^\infty}\|v\|_{\sL^{\infty}(\R)}}\|\partial_2 \lambda\|_{\sL^\infty}\|v\|_{\sL^{\infty}(\R)} \vert X\vert \vert t-\tilde t\vert \\
    &\qquad +  \tfrac{\e^{T\|\partial_2 \lambda\|_{\sL^\infty}\|v\|_{\sL^{\infty}(\R)}}}{h\uv}\int_{X}\int_0^h \Big\vert  v_{\eps}(x_{\eps}(z+s\sk t))  -  v_{\eps}(x_{\eps}(z+s\sk \tilde{t}))\Big\vert \dd s \dd z\\
    &\leq\tfrac{\|v\|_{\sL^{\infty}(\R)}}{\uv}\e^{T\|\partial_2 \lambda\|_{\sL^\infty}\|v\|_{\sL^{\infty}(\R)}}\|\partial_2 \lambda\|_{\sL^\infty}\|v\|_{\sL^{\infty}(\R)} \vert X\vert \vert t-\tilde t\vert \\
    &\qquad +  \tfrac{\e^{T\|\partial_2 \lambda\|_{\sL^\infty}\|v\|_{\sL^{\infty}(\R)}}}{\uv}\sup_{s\in(0,h)}\int_{X}\Big\vert  v_{\eps}(x_{\eps}(z+s\sk t))  -  v_{\eps}(x_{\eps}(z+s\sk \tilde{t}))\Big\vert \dd z\\
    &\leq \tfrac{\|v\|_{\sL^{\infty}(\R)}}{\uv}\e^{T\|\partial_2 \lambda\|_{\sL^\infty}\|v\|_{\sL^{\infty}(\R)}}\|\partial_2 \lambda\|_{\sL^\infty}\|v\|_{\sL^{\infty}(\R)} \vert X\vert \vert t-\tilde t\vert \\
    &\qquad +  \tfrac{\e^{T\|\partial_2 \lambda\|_{\sL^\infty}\|v\|_{\sL^{\infty}(\R)}}}{\uv}\int_{\tilde{X}(h)}\Big\vert  v_{\eps}(x_{\eps}(z\sk t))  -  v_{\eps}(x_{\eps}(z\sk \tilde{t}))\Big\vert \dd z,
\end{align*}
with \[\tilde{X}(h)\: (0,h)+X \subset \R.\] Letting \(\eps\rightarrow 0\) the first term remains the same, while by \cref{lem:convergence_composition_L_1} the second yields
\begin{align*}
    \lim_{\eps\rightarrow 0}\int_{\tilde{X}(h)}\Big\vert  v_{\eps}(x_{\eps}(z\sk t))  -  v_{\eps}(x_{\eps}(z\sk \tilde{t}))\Big\vert \dd z&=\int_{\tilde{X}(h)}\Big\vert  v(x(z\sk t))  -  v(x(z\sk \tilde{t}))\Big\vert \dd z.
\end{align*}
It is worth noting that this last term is well-defined as \(v(x(z\sk t))\) is integrated over \(z\), for which the ODE solution \(x\:\cX[v,\lambda]\) is strictly monotone according to \cref{cor:upper_lower_bounds_partial_3_X}.

Recalling all the previous estimates starting from \cref{eq:partial_2_xi_t_tilde_t}, for \(h\rightarrow 0\) we have:

\begin{equation}
\begin{split}
   \|\partial_{1}x(\cdot\sk\tilde{t})-\partial_{1}x(\cdot\sk t)\|_{\sL^{1}(X)} &\leq \tfrac{\e^{T\|\partial_2 \lambda\|_{\sL^\infty}\|v\|_{\sL^{\infty}(\R)}}}{\uv}\|v\|_{\sL^{\infty}(\R)}^{2}\|\partial_2 \lambda\|_{\sL^\infty}\vert X\vert \vert t-\tilde t\vert \\
   &\quad + \tfrac{\e^{T\|\partial_2 \lambda\|_{\sL^\infty}\|v\|_{\sL^{\infty}(\R)}}}{\uv}\int_{X}\Big\vert  v(x(z\sk t))  -  v(x(z\sk \tilde{t}))\Big\vert \dd z.
\end{split}
   \label{eq:partial_2_xi_continuity}
\end{equation}

Now, distinguish the two considered cases:
\begin{description}
    \item[$v\not\in \sTV_{\loc}(\R)$:] Then, when letting \(t\rightarrow \tilde{t}\), the first term in \cref{eq:partial_2_xi_continuity} converges to zero. This is also the case for the second term by \cref{lem:convergence_composition_L_1} as we have \(\|x(\cdot\sk t)-x(\cdot\sk \tilde{t})\|_{\sL^{\infty}(\R)}\rightarrow 0\) for \(t\rightarrow \tilde{t}\) and \(\partial_{1}x\) bounded away from zero by \cref{cor:upper_lower_bounds_partial_3_X}. 
    \item[$v\in \sTV_{\loc}(\R)$:] Since we then want to obtain the Lipschitz-continuity in time, we need to study the second term in \cref{eq:partial_2_xi_continuity} in more detail. To this end, we reformulate as follows and use the standard mollifier for \(v_{\eps}\) to have
    \begin{align*}
        &\int_{X}\Big\vert v(x(z\sk t))-v(x(z\sk \tilde{t}))\Big\vert \dd z\\
        &\leq 
        \int_{X}\Big\vert v(x(z\sk t))-v_{\eps}(x(z\sk t))\Big\vert \dd z + \int_{X}\Big\vert v_{\eps}(x(z\sk t))-v_{\eps}(x(z\sk \tilde{t}))\Big\vert \dd z\\
        &\qquad +\int_{X}\!\Big\vert v_{\eps}(x(z\sk \tilde{t}))-v(x(z\sk \tilde{t}))\Big\vert \dd z.
    \end{align*}
    Focusing only on the second term
    \begin{align*}
        &\int_{X}\!\Big\vert v_{\eps}(x(z\sk t))-v_{\eps}(x(z\sk \tilde{t}))\Big\vert \dd z\\
        &\leq\int_{X}\Big\vert \int_{x(z\sk t)}^{x(z\sk\tilde{t})}\vert v_{\eps}'(y)\vert \dd y\Big\vert \dd z\\
          &\leq \big\|x^{-1}(\cdot,t)-x^{-1}(\cdot,\tilde t)\big\|_{\sL^\infty(x(X;t)\cup x(X;\tilde t))}\int_{x(X;t)\cup x(X;\tilde t)}\vert v_{\eps}'(y)\vert\dd y,\\
          \intertext{with $x^{-1}$ the inverse of $x$ w.r.t. its first argument. This argument exists and is unique according to \cref{cor:upper_lower_bounds_partial_3_X}.}
        &\overset{\eqref{eq:improved_bound_partial_3_X}}{\leq} \tfrac{\|v\|_{\sL^{\infty}(\R)}}{\uv}\e^{T\mathcal{L}_{2}\|v\|_{\sL^{\infty}(\R)}}\|x(\cdot;\tilde{t})-x(\cdot;t)\|_{\sL^{\infty}(\R)}\int_{x(X;t)\cup x(X;\tilde{t})}\vert v'_{\eps}(y)\vert \dd y\\
        &\ \leq \tfrac{\|v\|_{\sL^{\infty}(\R)}^{2}\|\lambda\|_{\sL^{\infty}((0,T);\sL^{\infty}(\R))}}{\uv}\e^{T\mathcal{L}_{2}\|v\|_{\sL^{\infty}(\R)}}\vert t-\tilde{t}\vert\int_{\cup_{s\in[0,T]}x(X;s)}\vert v_{\eps}'(y)\vert\dd y\\
        &\overset{\eps\rightarrow 0}{\leq}\tfrac{\|v\|_{\sL^{\infty}(\R)}^{2}\|\lambda\|_{\sL^{\infty}((0,T);\sL^{\infty}(\R))}}{\uv}\e^{T\mathcal{L}_{2}\|v\|_{\sL^{\infty}(\R)}}\vert t-\tilde{t}\vert \vert v\vert_{\sTV(\cup_{s\in[0,T]}x(X,s))}\\
        &\ \leq \tfrac{\|v\|_{\sL^{\infty}(\R)}^{2}\|\lambda\|_{\sL^{\infty}((0,T);\sL^{\infty}(\R))}}{\uv}\e^{T\mathcal{L}_{2}\|v\|_{\sL^{\infty}(\R)}}\vert t-\tilde{t}\vert \vert v\vert_{\sTV\big(X+T\|v\|_{\sL^{\infty}(\R)}\mathcal{L}\cdot(-1,1)\big)}.
    \end{align*}
    Using this in the estimate in \cref{eq:partial_2_xi_continuity} yields the claim.
\end{description}
\end{proof}

The previous statement used an approximation result to derive the required regularity of \(\partial_{3}\cX\). However, at least for \(v\in\sTV(\R)\) it is possible to obtain this directly with the surrogate system introduced in \cref{theo:surrogate}.
\begin{cor}[An alternative proof of \cref{lem:stability_C_L_1} using \cref{theo:surrogate}]\label{cor:continuity_alternative}
 Given \(x_{0}\in\R\) and \((v,\lambda)\in\sL^{\infty}(\R\sk \R_{\geq \uv})\times\sL^{\infty}\big((0,T)\sk \sW^{1,\infty}(\R)\big)\) as in \cref{ass:input_datum},
assume in addition
\[
v\in \sTV_{\loc}(\R).
\]
Then, we have for \(\cX[v,\lambda](x_0\sk \cdot)\) as in \cref{defi:weak} that \(\forall (t,\tt)\in[0,T]^{2}\)
\begin{align*}
    &\|\partial_{3}\cX[v,\lambda](\cdot\sk t)-\partial_{3}\cX[v,\lambda](\cdot\sk \tt)\|_{\sL^{1}(X)}\\
    &\leq \tfrac{\|v\|_{\sL^{\infty}(\R)}^{2}}{\uv}\mL_2\vert X\vert \vert t-\tilde{t}\vert \e^{T \mathcal{L}_{2}\|v\|_{\sL^{\infty}(\R)}}\\
    &\quad +\Big(1+t\|v\|_{\sL^{\infty}(\R)} \mL_{2}\exp\Big(t\|v\|_{\sL^{\infty}(\R)}\mL_{2}\Big)\Big)\tfrac{\|v\|_{\sL^{\infty}(\R)}^2}{\uv^2}\mL\vert \tt-t\vert \, \vert v\vert _{\sTV\left( X+\|v\|_{\sL^{\infty}(\R)}\mL(-1,1)\right),}
\end{align*}
    with
    \begin{align}
    \mL&\: \|\lambda\|_{\sL^{\infty}((0,T)\sk \sL^{\infty}(\R))},\qquad \mL_{2}\: \|\partial_{2}\lambda\|_{\sL^{\infty}((0,T)\sk \sL^{\infty}(\R))}.
\end{align}  
In particular, it holds that
\[
[0,T]\ni t \mapsto \Big( \R\ni y \mapsto \partial_{y}\cX[v,\lambda](y,t) \Big)\in \sC\big([0,T]\sk \sL^{1}_{\loc}(\R)\big).
\]
\end{cor}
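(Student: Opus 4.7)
The plan is to bypass the mollification argument of \cref{lem:stability_C_L_1} by differentiating directly through the decomposition $\cX[v,\lambda](x_{0}\sk \cdot) \equiv \cZ[v]^{-1}\bigl(x_{0}\sk \cC[\lambda,\cZ[v](x_{0}\sk \cdot)](\cdot)\bigr)$ established in \cref{theo:surrogate}. Since $\cZ[v]^{-1}(x_{0}\sk \cdot)$ is globally Lipschitz with constant bounded by $\|v\|_{\sL^{\infty}(\R)}$ (recall \cref{eq:lower_bound_z_Lipschitz}), and $\cC[\lambda,\cZ[v](x_{0}\sk \cdot)](t)$ depends Lipschitz-continuously on $x_{0}$ by the stability estimate \cref{eq:stability_c}, I would form the finite difference $\tfrac{\cX(x_{0}+h\sk t)-\cX(x_{0}\sk t)}{h}$, pass to the limit $h\to 0$ through the decomposition, and express $\partial_{3}\cX(x_{0}\sk t)$ in a closed form matching \cref{rem:partial_3_X_explicit}, namely as $\tfrac{v(\cX(x_{0}\sk t))}{v(x_{0})}\,E(x_{0}\sk t)$ with $E(x_{0}\sk t)$ the exponential in $\partial_{2}\lambda$ integrated along the characteristic. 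The differentiation of the implicitly defined fixed-point $\cC$ with respect to $x_{0}$ is closed by Gr\"onwall applied to the linearization of \cref{eq:surrogate_system2}, which needs only the $\sL^{\infty}$ regularity of $v$ together with the Lipschitz estimate for $\cZ[v]^{-1}$.

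Next, for $(t,\tt)\in[0,T]^{2}$, I would split $\|\partial_{3}\cX(\cdot\sk t)-\partial_{3}\cX(\cdot\sk \tt)\|_{\sL^{1}(X)}$ according to the two factors above. The variation of the exponential $E(x_{0}\sk t)-E(x_{0}\sk \tt)$ is bounded uniformly in $x_{0}$ by $\mL_{2}\|v\|_{\sL^{\infty}(\R)}|t-\tt|\,e^{T\mL_{2}\|v\|_{\sL^{\infty}(\R)}}$, yielding the first summand $\tfrac{\|v\|_{\sL^{\infty}(\R)}^{2}}{\uv}\mL_{2}|X||t-\tt|\,e^{T\mL_{2}\|v\|_{\sL^{\infty}(\R)}}$ after integrating over $X$. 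The variation of the prefactor $v(\cX(\cdot\sk t))/v(x_{0})$ in the $\sL^{1}(X)$ norm is handled by the $\sTV_{\loc}$ assumption: invoking the bi-Lipschitz dependence $x_{0} \mapsto \cX(x_{0}\sk t)$ from \cref{cor:upper_lower_bounds_partial_3_X} to change variables converts the integral into one over the image of $X$, and combining $|\cX(x_{0}\sk t)-\cX(x_{0}\sk \tt)|\leq \|v\|_{\sL^{\infty}(\R)}\mL|t-\tt|$ (immediate from \cref{defi:cara}) with the pointwise BV estimate $|v(a)-v(b)|\leq |v|_{\sTV([\min\{a,b\},\max\{a,b\}])}$ produces the second summand, with $\sTV$ seminorm on the enlarged window $X+\|v\|_{\sL^{\infty}(\R)}\mL\cdot(-1,1)$ accounting for all points reached by characteristics starting in $X$.

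The main obstacle is making the chain rule rigorous when $v$ is only BV: the derivative $\partial_{3}\cZ[v]^{-1}(x_{0}\sk z)$ exists only almost everywhere and coincides with $v(\cZ[v]^{-1}(x_{0}\sk z))/v(x_{0})$ only for a suitable representative of $v$. This is handled by fixing the precise (say right-continuous) representative provided by the $\sTV_{\loc}$ assumption and noting that the identity $\cZ[v]^{-1}\circ \cZ[v] = \mathrm{Id}$ already built into \cref{theo:surrogate} together with \cref{eq:lower_bound_z_Lipschitz} pins down the correct trace of $v$ along the characteristic. Once this identification is complete, the claim follows by bookkeeping with the stability estimates from \cref{theo:existence_uniqueness_stability_surrogate,cor:upper_lower_bounds_partial_3_X}, directly yielding the structured bound stated in the corollary and, in particular, the desired continuity $t \mapsto \partial_{3}\cX[v,\lambda](\cdot\sk t)\in \sC\bigl([0,T]\sk \sL^{1}_{\loc}(\R)\bigr)$.
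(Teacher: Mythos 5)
Your strategy stands or falls with the claim that the explicit derivative formula of \cref{rem:partial_3_X_explicit}, namely $\partial_{3}\cX=\tfrac{v(\cX)}{v(x_{0})}\,\e^{\int_{0}^{t}\partial_{2}\lambda(s,\cX)\,v(\cX)\dd s}$, remains valid for $v$ that is merely $\sTV_{\loc}$ once one ``fixes the right-continuous representative'' -- and this is exactly the step that is not justified; the paper itself warns in \cref{rem:partial_3_X_explicit} that the formula has no clear interpretation for discontinuous $v$. The obstruction is not the choice of representative of $v$ in the spatial variable: since $\lambda$ may vanish, a characteristic can sit at a discontinuity point $c$ of $v$ on a set of times of positive measure, and then the time integral in the exponent (whose integrand contains $v(\cX(x_{0}\sk s))\,\partial_{2}\lambda(s,\cX(x_{0}\sk s))$ with $\partial_{2}\lambda(s,c)$ not necessarily zero) genuinely depends on whether one inserts $v(c^{-})$ or $v(c^{+})$; worse, at such an $x_{0}$ the two-sided derivative $\partial_{3}\cX(x_{0}\sk t)$ need not exist at all (take $\lambda(x)=-x$ and $v$ piecewise constant with a jump at $0$: the flow map has one-sided derivatives $\e^{-v(0^{\pm})t}$ at $x_{0}=0$). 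So appealing to $\cZ[v]^{-1}\circ\cZ[v]=\mathrm{Id}$ to ``pin down the trace'' does not rescue the formula pointwise. To make your route rigorous you would need at least an a.e.\ chain rule for the Lipschitz composition $\cX=\cZ[v]^{-1}(x_{0}\sk\cC(\cdot))$, a measure-theoretic argument that the set of initial data whose characteristics spend positive time on the (countable) jump set of $v$ is Lebesgue-null, and then an a.e.\ version of the formula together with the estimate of its time increments -- none of which is sketched; this is the heart of the matter rather than ``bookkeeping'', and even granting it, your computation would more naturally reproduce the constants of \cref{lem:stability_C_L_1} than the ones stated in the corollary.

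By contrast, the paper's alternative proof deliberately avoids any pointwise formula for $\partial_{3}\cX$ with discontinuous $v$: it mollifies $v$, writes the finite difference in the initial datum through the Carath\'eodory form and the fundamental theorem of calculus, splits into the $v_{\eps}\partial_{2}\lambda$ contribution (your first summand, which is fine) and the $v_{\eps}'\lambda$ contribution, and treats the latter using the surrogate system of \cref{theo:surrogate} only through substitutions and the stability estimates of \cref{theo:existence_uniqueness_stability_surrogate}, so that the limits $\eps\rightarrow0$ and $h\rightarrow0$ are taken on quantities integrated in $x_{0}$, where $\vert v_{\eps}\vert_{\sTV}\leq\vert v\vert_{\sTV}$ suffices. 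As written, your proposal has a genuine gap at its central step; either supply the missing a.e.\ differentiation lemma or fall back on a mollification of $v$ as the paper does.
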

\begin{proof}
This time, we prove the claim (with slightly different bounds) for discontinuous \(v\in \sTV_{\loc}(\R)\) by taking advantage of the surrogate system in \cref{theo:surrogate}.
Following the first steps in the previous proof of \cref{lem:stability_C_L_1}, we now only smooth the discontinuous velocity and use the notation
\[
x \equivd \cX[v,\lambda]\quad \text{ and }\quad \xe \equivd \cX[\ve,\lambda_{\eps}] \quad \text{ on } \R\times(0,T).
\]
Then, it holds that
\begin{align}
\eqref{eq:a_nice_equation} &= \int_{X} \bigg\vert  \tfrac{\xe(z+h\sk \tt)-\xe(z+h\sk t)}{h} - \tfrac{\xe(z\sk \tt)-\xe(z\sk t)}{h}\bigg\vert \dd z.
\intertext{Using that the solution is a Carath\'eodory solution as in \cref{theo:equivalence_cara_weak}}
&= \tfrac{1}{h}\int_{X} \bigg\vert  \int_t^{\tt}v_{\eps}(\xe(z+h\sk s))\lambda(s,\xe(z+h\sk s)) \notag\\
&\qquad\qquad\qquad\qquad - v_{\eps}(\xe(z\sk s))\lambda(s,\xe(z\sk s)) \dd s\bigg\vert \dd z\notag\\
\intertext{and applying the Fundamental theorem of integration}
& = \tfrac{1}{h}\int_{X} \bigg\vert  \int_{t}^{\tt} \int_{\xe(z\sk s)}^{\xe(z+h\sk s)} v'_{\eps}(y)\lambda(s,y) + v_{\eps}(y) \partial_2 \lambda(s,y) \dd y \dd s \bigg\vert \dd z\notag \\
& \leq \tfrac{1}{h}\int_{X} \bigg\vert  \int_{t}^{\tt} \int_{\xe(z\sk s)}^{\xe(z+h\sk s)} \big\vert v'_{\eps}(y)\lambda(s,y)\big\vert \dd s \bigg\vert \dd z \notag
\\
&\qquad + \tfrac{1}{h}\int_{X} \bigg\vert  \int_{t}^{\tt} \int_{\xe(z\sk s)}^{\xe(z+h\sk s)} \big\vert v_{\eps}(y) \partial_2 \lambda(s,y) \dd y \dd s\big\vert  \bigg\vert \dd z.\label{eq:all_or_nothing_revisited}
\end{align}
The second term in the previous estimate,  \cref{eq:all_or_nothing_revisited}, is estimated as follows 
\begin{align*}
    &\tfrac{1}{h}\int_{X} \bigg\vert  \int_{t}^{\tt} \int_{\xe(z\sk s)}^{\xe(z+h\sk s)} \big\vert v_{\eps}(y) \partial_2 \lambda(s,y) \dd y \dd s\big\vert  \bigg\vert \dd z\\
    &\leq \tfrac{\|v_{\eps}\|_{\sL^{\infty}(\R)}\|\partial_{2}\lambda\|_{\sL^{\infty}((0,T)\sk \sL^{\infty}(\R))}\vert X\vert \vert t-\tilde{t}\vert}{h}  \esssup\nolimits_{(z,s)\in \R\times (0,T)}\vert \xe(z+h\sk s)-\xe(z\sk s)\vert. 
    \intertext{By applying the stability estimate in \cref{cor:upper_lower_bounds_partial_3_X} and recalling that  \(\|v_{\eps}\|_{\sL^{\infty}(\R)}\leq \|v\|_{\sL^{\infty}(\R)}\), we obtain}
    &\leq \tfrac{\|v\|_{\sL^{\infty}(\R)}^{2}}{\uv}\mL_2\vert X\vert \vert t-\tilde{t}\vert \e^{T \mathcal{L}_{2}\|v\|_{\sL^{\infty}(\R)}}.
\end{align*}
Clearly, for \(t\rightarrow \tilde{t}\) the previous term converges to zero uniformly in \(h\) and \(\eps\).

The first term in \cref{eq:all_or_nothing_revisited} is more involved. However, thanks to the smoothing of \(v\) by \(v_{\eps}\), for now we can take advantage of this higher regularity and have
\begin{align}
    & \tfrac{1}{h}\int_{X} \bigg\vert  \int_{t}^{\tt}
    \int_{\xe(x_0\sk s)}^{\xe(x_0+h\sk s)} v'_{\eps}(y)\lambda(s,y) \dd y \dd s \bigg\vert \dd x_0. \\
\intertext{Using \cref{eq:surrogate_system1,eq:surrogate_system2} and \cref{eq:identity_surrogate_system}: \(\xe(x_{0}\sk \cdot)\equiv \cZ[v_{\eps}]^{-1}\big(x_0\sk \cC[\lambda,\cZ[v_{\eps}](x_0\sk \ast)](\cdot)\big)\) }
    &= \tfrac{1}{h}\int_{X} \bigg\vert  \int_{t}^{\tt} \int_{\cZ[v_{\eps}]^{-1}\big(x_0\sk \cC[\lambda,\cZ[v_{\eps}](x_0\sk \ast)](s)\big)}^{\cZ[v_{\eps}]^{-1}\big(x_0+h\sk \cC[\lambda,\cZ[v_{\eps}](x_0+h\sk \ast)](s)\big)} v'_{\eps}(y)\lambda(s,y) \dd y \dd s \bigg\vert \dd x_0. 
    \intertext{Substituting \(y=\cZ[v_{\eps}]^{-1}(x_{0}\sk u)\Rightarrow u=\cZ[v_{\eps}](x_{0}\sk y)\), we have for the derivative \(\tfrac{\dd}{\dd u}\cZ[v_{\eps}]^{-1}(x_{0}\sk u)=v_{\eps}\big(\cZ[v_{\eps}]^{-1}(x_{0}\sk u)\big)\). Thus}
    &= \tfrac{1}{h}\!\!\int_{X} \!\bigg\vert\!\!  \int_{t}^{\tt}\!\!\! \int_{\cC[\lambda,\cZ[v_{\eps}](x_0\sk \cdot)](s)}^{\cZ[v_{\eps}]\big(x_{0}\sk \cZ[v_{\eps}]^{-1}\big(x_0+h\sk \cC[\lambda,\cZ[v_{\eps}](x_0+h\sk \ast)](s)\big)\big)}\!\!\!\!\!\! a(u,s,x_{0}) \dd u \dd s \bigg\vert \dd x_0, \label{eq:2021}
\end{align}
with
\begin{equation}
    a(u,s,x_{0})\:v_{\eps}'\big(\cZ[v_{\eps}]^{-1}(x_{0}\sk u)\big)\lambda\big(s,\cZ[v_{\eps}]^{-1}(x_{0}\sk u)\big) v_{\eps}\big(\cZ[v_{\eps}]^{-1}(x_{0}\sk u)\big).
\end{equation}
Recalling that for \(x\in\R\) we can estimate
\begin{align}
\cZ[v_{\eps}]\big(x_{0}\sk \cZ[v_{\eps}]^{-1}(x_{0}+h\sk x)\big)&=
\cZ[v_{\eps}]\big(x_{0}\sk \cZ[v_{\eps}]^{-1}(x_{0}+h\sk x)\big)\\
&\quad -\cZ[v_{\eps}]\big(x_{0}+h\sk \cZ[v_{\eps}]^{-1}(x_{0}+h\sk x)\big)\label{eq:cZ_estimate_1}\\
&\quad +\cZ[v_{\eps}]\big(x_{0}+h\sk \cZ[v_{\eps}]^{-1}(x_{0}+h\sk x)\big)\notag\\
&=\cZ[v_{\eps}]\big(x_{0}\sk \cZ[v_{\eps}]^{-1}(x_{0}+h\sk x)\big)\\
&\quad -\cZ[v_{\eps}]\big(x_{0}+h\sk \cZ[v_{\eps}]^{-1}(x_{0}+h\sk x)\big)+x\notag
\intertext{and by \cref{eq:stability_z} in \cref{theo:existence_uniqueness_stability_surrogate}}
&\leq x+\tfrac{h}{\underline{v}}.\label{eq:cZ_estimate_2}
\end{align}
In addition, by \cref{eq:stability_c} together with \cref{eq:stability_z} in \cref{theo:existence_uniqueness_stability_surrogate}, we have for \(s\in[0,T]\)
\begin{align}
    &\cC[\lambda,\cZ[v_{\eps}](x_0+h\sk \cdot)](t) - \cC[\lambda,\cZ[v_{\eps}](x_0\sk \cdot)](t)\notag\\
    &\leq t\tfrac{\|v_{\eps}\|_{\sL^{\infty}(\R)}}{\uv} \mL_{2}\exp\Big(t\|v_{\eps}\|_{\sL^{\infty}(\R)}\mL_{2}\Big)h\leq \underbrace{t\tfrac{\|v\|_{\sL^{\infty}(\R)}}{\uv} \mL_{2}\exp\Big(t\|v\|_{\sL^{\infty}(\R)}\mL_{2}\Big)}_{\eqqcolon C(v,\mL_2,t)}h\\
    &= C(v,\mL_2,t)h.\label{eq:Lipschitz_cC}
\end{align}
Continuing the previous estimate, assuming without loss of regularity that \(\tt\geq t\) yields
\begin{align}
    \eqref{eq:2021}&\overset{\eqref{eq:cZ_estimate_1},\eqref{eq:cZ_estimate_2}}{\leq} \tfrac{1}{h}\int_{X}  \int_{t}^{\tt} \int_{\cC[\lambda,\cZ[v_{\eps}](x_0\sk \cdot)](s)}^{{\cC[\lambda,\cZ[v_{\eps}](x_0+h\sk \cdot)](s)}+\frac{h}{\underline{v}}}\big\vert a(u,s,x_{0})\big\vert \dd u \dd s \dd x_0\\
    &\overset{\eqref{eq:Lipschitz_cC}}{\leq} \tfrac{1}{h}\int_{X}  \int_{t}^{\tt} \int_{\cC[\lambda,\cZ[v_{\eps}](x_0\sk \cdot)](s)}^{{\cC[\lambda,\cZ[v_{\eps}](x_0\sk \cdot)](s)}+\tfrac{h}{\uv}\left(1+C(v,\mL_2,s)\right)}\big\vert  a(u,s,x_{0})\big\vert \dd u \dd s \dd x_0.\\
    \intertext{By \cref{eq:surrogate_system2} \( \big\vert \cC[\lambda,\cZ[v_{\eps}](x_0\sk \cdot)](s)\big\vert \leq T\|\lambda\|_{\sL^{\infty}((0,T)\sk \sL^{\infty}(\R))}\ \forall (x_{0},s)\in\R\times(0,T)\) }
    &\leq \tfrac{1}{h}\int_{X}  \int_{t}^{\tt}\!\! \sup_{y\in(-\mL T,\mL T)}\int_{y}^{y+\tfrac{h}{\uv}\left(1+C(v,\mL_2,s)\right)}\big\vert  a(u,s,x_{0})\big\vert \dd u \dd s \dd x_0\\
    &\leq \tfrac{\vert \tt-t\vert \left(1+C(v,\mL_2,T)\right)\|v_{\eps}\|_{\sL^{\infty}(\R)}\mL}{\uv}\hspace{-10pt}\sup_{u\in Z(v,\mL,\mL_{2},T,h)}\!\int_{X}\!\big\vert v_{\eps}'\big(\cZ[v_{\eps}]^{-1}(x_{0}\sk u)\big)\big\vert \dd x_{0},
\intertext{with \(Z(v,\mL,\mL_2,T,h) \: h\left(\frac{1}{\underline{v}}+C(v,\mL_2,T)\right)+\mL T(-1,1)\subset\R\)}
&\leq \vert \tt-t\vert\tfrac{\left(1+C(v,\mL_2,T)\right)\|v_{\eps}\|_{\sL^{\infty}(\R)}^{2}\mL}{\uv^{2}} \sup_{u\in Z(v,\mL,\mL_2,T,h)}\int_{\cZ[v_{\eps}]^{-1}(X\sk u)}\big\vert v_{\eps}'(y)\big\vert \dd y. \label{eq:the_final_problem}
\end{align}
In the previous estimate we have used the substitution
\begin{align*}
y&=\cZ[v_{\eps}]^{-1}(x_{0}\sk u)\Rightarrow
\cZ[v_{\eps}](x_{0}\sk y)=u\\
\Rightarrow & \  \tfrac{\dd}{\dd y}\cZ[v_{\eps}](x_{0}\sk y)=\tfrac{1}{v_{\eps}(y)}-\tfrac{\dd}{\dd y}x_{0}(y)\tfrac{1}{v_{\eps}(x_{0}(y))}=0 \ \Rightarrow \ \big\vert \tfrac{\dd}{\dd y}x_{0}(y)\big\vert \leq \tfrac{\|v_{\eps}\|_{\sL^{\infty}(\R)}}{\uv}.
\end{align*}
Thus, for $x\in X$ and $u \in Z(v,\mL,\mL_2,T,h)$ we can estimate
\begin{align*}
    \big\vert \cZ[v]^{-1}(x\sk u)-x\big\vert &= \big\vert \cZ[v]^{-1}(x\sk u)-\cZ[v]^{-1}(x\sk 0)\big\vert  \leq \|v\|_{\sL^{\infty}(\R)}\vert u\vert\\
    &\leq  \|v\|_{\sL^{\infty}(\R)}\Big(\mL T+\tfrac{h}{\uv}\big(1+C(v,\mL_2,T)\big)\Big).
\end{align*}
Consequently for \(h\rightarrow 0\)
\begin{align*}
  \cZ[v_{\eps}]^{-1}(X\sk u) \subset X +\|v\|_{\sL^{\infty}(\R)}\mL T(-1,1).
\end{align*}
Using this to further estimate in \cref{eq:the_final_problem}, we have
\begin{align*}
    \eqref{eq:the_final_problem}  &\leq \Big(1+C(v,\mL_2,T)\Big)\tfrac{\|v_{\eps}\|_{\sL^{\infty}(\R)}^{2}}{\uv^2}\mL\vert \tt-t\vert \, \vert v_{\eps}\vert _{\sTV\left( X+\|v\|_{\sL^{\infty}(\R)}\mL(-1,1)\right)}\\
    &\leq \Big(1+C(v,\mL_2,T)\Big)\tfrac{\|v\|_{\sL^{\infty}(\R)}^2}{\uv^2}\mL\vert \tt-t\vert \, \vert v\vert _{\sTV\left( X+\|v\|_{\sL^{\infty}(\R)}\mL(-1,1)\right)}.
\end{align*}
As the terms are all bounded, we can let \(\tt\rightarrow t\) and obtain -- together with the previous estimates -- the claimed continuity in time.
\end{proof}

\section{Analysis of discontinuous nonlocal conservation laws} \label{sec:nonlocal_well_posedness}
In this section, we leverage the theory established in \cref{sec:discontinuous_ODE} to obtain existence and uniqueness of weak solutions for the following class of nonlocal conservation laws with discontinuous (in space) velocity (as stated in \cref{defi:disc_conservation_law}). 
First, we state the assumptions on the involved datum:
\begin{ass}[Input datum -- discontinuous nonlocal conservation law]\label{ass:input_datum_conservation_laws}
For \(T\in\R_{>0}\) it holds that
\begin{itemize}
    \item \(q_{0}\in \sL^{\infty}(\R)\)
    \item \(\gamma\in \sBV(\R\sk \R_{\geq0})\) with \(\|\cdot\|_{\sBV(\R)}\: \|\cdot\|_{\sL^{1}(\R)}+\vert\cdot\vert_{\sTV(\R)}\).
    \item \(V\in \sW^{1,\infty}_{\loc}(\R)\)
    \item \(v\in \sL^{\infty}(\R\sk \R_{\geq \uv})\) for a \(\uv\in\R_{>0}\).
\end{itemize}
\end{ass}
As can be seen, the assumptions on \(V\) and \(v\) are identical to those for the discontinuous IVP in \cref{defi:disc_ODE} (compare with \cref{ass:input_datum}) and are not restrictive. The assumptions on the initial datum \(q_{0}\in\sL^{\infty}(\R)\) are relatively standard in the theory of conservation laws and the assumptions on the nonlocal kernel \(\gamma\) are also minimal (compare in particular with \cite{coclite2021existence}).

We use the classical definition of weak solutions as follows
\begin{defi}[Weak solution]\label{defi:weak_solution}
For the initial datum \(q_{0}\in\sL^{\infty}(\R)\), \(q\in \sC\big([0,T]\sk \sL^{1}_{\loc}(\R)\big)\cap\sL^{\infty}\big((0,T)\sk \sL^{\infty}(\R)\big)\) with datum as in \cref{ass:input_datum_conservation_laws} is called a \textbf{weak solution} of \cref{defi:disc_conservation_law} iff \(\forall \phi\in \sC^{1}_{\text{c}}((-42,T)\times\R)\) and it holds that
\begin{gather*}
\iint_{\OT}q(t,x)\Big(\phi_{t}(t,x)+v(x)V\big(\big(\gamma\ast q(t,\cdot)\big)(x)\big)\phi_{x}(t,x)\Big)\dd x\dd t\\
+\int_{\R}\phi(0,x)q_{0}(x)\dd x=0.
\end{gather*}
\end{defi}
\subsection{Well-posedness of solutions}\label{subsec:well_posed_nonlocal}
In this section, we will establish the existence and uniqueness of the weak solution to discontinuous nonlocal conservation laws. We start with existence for sufficiently small time horizons and use a reformulation in terms of a fixed-point problem. Such a reformulation has been used in various contributions dealing with nonlocal conservation laws, including \cite{coron,wang,sarkar,keimer2,wangshang,pflug,pflug2,spinola,pflug3,pflug4,crippa2013existence}.
\begin{theo}[Existence/uniqueness, weak solutions, small time horizon]\label{theo:existence_uniqueness_nonlocal_conservation_law}
There exists a time horizon \(T\in\R_{>0}\) such that the nonlocal, discontinuous conservation law in \cref{defi:disc_conservation_law} admits a unique weak solution (as in \cref{defi:weak_solution}) \[q\in \sC\big([0,T]\sk \sL^{1}_{\;\loc}(\R)\big)\cap \sL^{\infty}\big((0,T)\sk \sL^{\infty}(\R)\big).\]
The solution can be stated as
\begin{align}
q(t,x)=q_{0}(\xi_{w}(t,x\sk 0))\partial_{2}\xi_{w}(t,x\sk 0),&& (t,x)\in\OT\label{eq:solution_formula}
\end{align}
where \(\xi_{w}:\OT\times [0,T] \rightarrow \R\) is the unique solution of the IVP as in \cref{defi:weak} for \((t,x)\in\OT\)
\begin{equation}
\begin{aligned}
\partial_3 \xi_w(t,x\sk \tau) &= v\big(\xi_w(t,x\sk \tau)\big)V\big(w(\tau,\xi(t,x\sk \tau))\big),\qquad \tau\in(0,T)\\
\xi_{w}(t,x\sk t)&=x
\end{aligned}
\label{eq:char}
\end{equation}
and \(w\) is the solution of the fixed-point equation in \(\sL^{\infty}\big((0,T)\sk \sL^{\infty}(\R)\big)\)
\begin{align*}
w(t,x)&=\int_{\R} \gamma\big(x-\xi_{w}(0,y\sk t)\big)q_{0}(y)\dd y, && (t,x)\in \OT.
\end{align*}
\end{theo}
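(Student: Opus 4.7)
The plan is to reformulate the discontinuous nonlocal conservation law as a fixed-point problem for the nonlocal averaging $w \: \gamma\ast q(t,\cdot)$ in the Banach space $\sL^{\infty}\big((0,T)\sk \sL^{\infty}(\R)\big)$, in the spirit of the Lipschitz case handled in \cite{pflug}. For any candidate $w$, the characteristic ODE \eqref{eq:char} falls exactly into \cref{defi:disc_ODE} with discontinuous part $v$ and Lipschitz-continuous part $\lambda(\tau,y) \: V(w(\tau,y))$, so \cref{theo:surrogate} yields a unique characteristic $\xi_{w}$. The flow identity $\xi_{w}(t,\xi_{w}(0,y\sk t)\sk 0)=y$ differentiated in $y$ gives $\partial_{2}\xi_{w}(t,\xi_{w}(0,y\sk t)\sk 0)\,\partial_{2}\xi_{w}(0,y\sk t)=1$; together with the change of variables $z=\xi_{w}(0,y\sk t)$ and the representation \eqref{eq:solution_formula}, this motivates the fixed-point map
\begin{align*}
    \sF[w](t,x) \: \int_{\R}\gamma\big(x-\xi_{w}(0,y\sk t)\big)q_{0}(y)\dd y.
\end{align*}

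Next I would verify the Banach fixed-point hypotheses on the closed ball $B_{R}\subset \sL^{\infty}\big((0,T)\sk \sL^{\infty}(\R)\big)$ with $R\: 2\|\gamma\|_{\sL^{1}(\R)}\|q_{0}\|_{\sL^{\infty}(\R)}$. Invariance $\sF(B_{R})\subset B_{R}$ is immediate from $|\sF[w](t,x)|\leq \|\gamma\|_{\sL^{1}(\R)}\|q_{0}\|_{\sL^{\infty}(\R)}$, uniformly in $T$. For the contraction, take $w_{1},w_{2}\in B_{R}$. Using $\gamma\in\sBV(\R)$ one controls $|\gamma(x-\xi_{w_{1}}(0,y\sk t))-\gamma(x-\xi_{w_{2}}(0,y\sk t))|$ by the total variation of $\gamma$ times $|\xi_{w_{1}}(0,y\sk t)-\xi_{w_{2}}(0,y\sk t)|$; the stability estimate \eqref{eq:stability_x} of \cref{theo:stability}, applied with identical $v$, identical $x_{0}=y$ and $\lambda_{i}=V\circ w_{i}$, then gives
\begin{align*}
|\xi_{w_{1}}(0,y\sk t)-\xi_{w_{2}}(0,y\sk t)| \leq C(T)\int_{0}^{t}\|w_{1}(s,\cdot)-w_{2}(s,\cdot)\|_{\sL^{\infty}(\R)}\dd s,
\end{align*}
where $C(T)$ is uniformly bounded on any fixed compact time interval and absorbs the local Lipschitz constant of $V$ on $[-R,R]$. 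Integrating against $q_{0}$ yields $\|\sF[w_{1}]-\sF[w_{2}]\|_{\sL^{\infty}}\leq T\cdot K\|w_{1}-w_{2}\|_{\sL^{\infty}}$, which is contractive for $T$ small enough. Banach's theorem then delivers a unique fixed point $w^{\ast}\in B_{R}$, and I define $q$ by \eqref{eq:solution_formula} with $w=w^{\ast}$.

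It remains to verify that this $q$ is a weak solution in the sense of \cref{defi:weak_solution} and that the weak solution is unique. The $\sL^{\infty}$-bound on $q$ is provided by the two-sided bounds on $\partial_{2}\xi_{w^{\ast}}(t,\cdot\sk 0)$ from \cref{cor:upper_lower_bounds_partial_3_X}, while the regularity $q\in \sC\big([0,T]\sk \sL^{1}_{\loc}(\R)\big)$ is exactly \cref{lem:stability_C_L_1} (or \cref{cor:continuity_alternative}) composed with the bounded initial datum $q_{0}$ via \cref{lem:convergence_composition_L_1}. The consistency $\gamma\ast q(t,\cdot)\equiv w^{\ast}(t,\cdot)$ follows from the same change-of-variables argument that motivated $\sF$, legitimate because $\xi_{w^{\ast}}(0,\cdot\sk t)$ is a bi-Lipschitz homeomorphism of $\R$. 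Plugging $q$ into the weak formulation and integrating along characteristics — justified by smoothing $(v,V\circ w^{\ast})$ through \cref{cor:ODE_approximation_smooth}, performing the identity in the classical case where \cref{eq:partial_1_x_explicit_solution_formula_1} applies, and then passing to the limit thanks to the stability results of \cref{subsec:stability_initial_datum_velocities,subsec:time_continuity_partial_2} — reduces the weak formulation to the ODE \eqref{eq:char}, which holds by construction. Uniqueness follows by noting that any weak solution $\tilde q$ forces $\tilde w \: \gamma\ast\tilde q \in B_{R}$ to satisfy $\sF[\tilde w]=\tilde w$ (using that weak solutions are transported along their own characteristics, again via an approximation by smoothed data), and contraction of $\sF$ forces $\tilde w = w^{\ast}$, hence $\tilde q = q$.

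The main obstacle is this last verification step: rigorously deriving the weak formulation for a solution defined through a discontinuous characteristic flow with only $\sL^{\infty}$ spatial regularity, and closing the same argument for an abstract weak solution during uniqueness. This is precisely why \cref{subsec:time_continuity_partial_2} was needed prior to this theorem, and why the smoothing estimate \cref{cor:ODE_approximation_smooth} plays the decisive role of reducing everything to a classical characteristic computation that can be passed to the limit.
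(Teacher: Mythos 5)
Your overall architecture (fixed point for the nonlocal term, characteristics supplied by \cref{theo:surrogate}, contraction via \cref{theo:stability}, regularity via \cref{cor:upper_lower_bounds_partial_3_X} and \cref{lem:stability_C_L_1}, uniqueness by showing any weak solution's nonlocal term solves the same fixed-point equation) is the same as the paper's, but there are three concrete gaps. First, your fixed-point set is a plain ball $B_R\subset \sL^{\infty}\big((0,T)\sk\sL^{\infty}(\R)\big)$: for a generic $w\in B_R$ the map $\lambda(\tau,y)=V(w(\tau,y))$ is \emph{not} Lipschitz in $y$, so \cref{ass:input_datum} fails and \cref{theo:surrogate} does not produce the characteristics $\xi_w$ at all — $\sF$ is not even defined on $B_R$. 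The paper works instead on the set $\Omega_{M}^{M'}(T)\subset\sL^{\infty}\big((0,T)\sk\sW^{1,\infty}(\R)\big)$ with an additional bound $M'$ on $\|\partial_{2}w\|$, which must itself be propagated by the map (this is where $\vert\gamma\vert_{\sTV(\R)}$ enters the self-mapping step); contraction is then carried out only in the weaker $\sL^{\infty}$-metric, under which $\Omega_M^{M'}$ is closed. Second, your invariance claim $\vert\sF[w](t,x)\vert\leq\|\gamma\|_{\sL^{1}(\R)}\|q_{0}\|_{\sL^{\infty}(\R)}$ ``uniformly in $T$'' is false: since $q_{0}$ is only $\sL^{\infty}$, bounding $\int_{\R}\gamma\big(x-\xi_{w}(0,y\sk t)\big)q_{0}(y)\dd y$ requires the change of variables $z=\xi_{w}(0,y\sk t)$, whose Jacobian contributes the factor $\|\partial_{2}\xi_{w}(t,\cdot\sk 0)\|_{\sL^{\infty}(\R)}\leq \tfrac{\|v\|_{\sL^{\infty}(\R)}}{\uv}\e^{t\|v\|_{\sL^{\infty}(\R)}\|V'\|_{\sL^{\infty}((-M,M))}M'}$; this is why the paper's radius carries the factor $\|v\|_{\sL^{\infty}(\R)}/\uv$ and why the smallness of $T$ is already needed for the self-mapping, not only for the contraction.

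Third, in the contraction step you estimate $\big\vert\gamma\big(x-\xi_{w_{1}}(0,y\sk t)\big)-\gamma\big(x-\xi_{w_{2}}(0,y\sk t)\big)\big\vert$ pointwise by $\vert\gamma\vert_{\sTV(\R)}$ times $\vert\xi_{w_{1}}-\xi_{w_{2}}\vert$; this is a Lipschitz bound and is false for a general $\gamma\in\sBV(\R)$ (take $\gamma$ an indicator function). What works, and what the paper uses, is the integrated estimate $\|f\circ g-f\circ h\|_{\sL^{1}(\R)}\leq\vert f\vert_{\sTV(\R)}\|g^{-1}-h^{-1}\|_{\sL^{\infty}(\R)}$ for bi-Lipschitz $g,h$, combined with the inverse-flow identity $\xi(t,\xi(\tau,x\sk t)\sk\tau)=x$ (itself proved by smoothing via \cref{cor:ODE_approximation_smooth} and \cref{theo:stability}); only after integrating in $y$ against $q_{0}$ does the total variation of $\gamma$ appear legitimately. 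Your concluding steps (verification of the weak formulation by smoothing and the uniqueness sketch) match the paper's strategy, which itself defers the detailed uniqueness argument to the adaptation of \cite{wang} and \cite[Theorem 3.2]{pflug}, so those parts are acceptable at the level of detail given — but the three points above must be repaired for the fixed-point argument to stand.
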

\begin{proof}
Define the fixed-point mapping
\begin{equation}
    F:\begin{cases}
    \sL^{\infty}\big((0,T)\sk \sW^{1,\infty}(\R)\big)&\rightarrow \sL^{\infty}\big((0,T)\sk \sW^{1,\infty}(\R)\big)\\
    w&\mapsto \Big((t,x)\mapsto \int_{\R}\gamma\big(x-\xi_{w}(0,y\sk t)\big)q_{0}(y)\dd y\Big)
    \end{cases}
    \label{defi:fixed_point}
\end{equation}
with \(\xi_{w}\) the characteristics as defined in \cref{eq:char}.
Let us first look into the well-posedness of these characteristics. Given that \(w\in \sL^{\infty}\big((0,T)\sk \sW^{1,\infty}(\R)\big)\) and recalling \cref{ass:input_datum_conservation_laws}, we can invoke \cref{theo:surrogate} to demonstrate that \(\xi_{w}\) is uniquely determined by \(w\).
Next, we show that \(F\) is a fixed-point mapping on the proper subset of \(\sL^{\infty}\big((0,T)\sk \sW^{1,\infty}(\R)\big)\).
To this end, define
\begin{align}
    M&\:42\|\gamma\|_{\sL^{1}(\R)}\|q_{0}\|_{\sL^{\infty}(\R)}\tfrac{\|v\|_{\sL^{\infty}(\R)}}{\uv}, \label{eq:defi_M}\\
    M'&\:42\vert\gamma\vert_{\sTV(\R)}\|q_{0}\|_{\sL^{\infty}(\R)}\tfrac{\|v\|_{\sL^{\infty}(\R)}}{\uv},\notag\\
    \Omega_{M}^{M'}(T)&\: \Big\{w\in \sL^{\infty}\big((0,T)\sk \sW^{1,\infty}(\R)\big): \|w\|_{\sL^{\infty}((0,T)\sk \sL^{\infty}(\R))}\leq M \notag \\
    &\qquad\qquad\qquad\qquad\qquad\qquad\qquad
    \wedge \|\partial_{2}w\|_{\sL^{\infty}((0,T)\sk \sL^{\infty}(\R))}\leq M'\Big\}.\label{eq:Omega}
\end{align}
\begin{description}
\item[Self-mapping:] 
Taking \(w\in\Omega_{M}^{M'}(T_{1})\) for a \(T_{1}\in(0,T]\), we estimate for \(t\in(0,T_{1}]\) 
\begin{align*}
    &\|F[w]\|_{\sL^{\infty}((0,t)\sk \sL^{\infty}(\R))}\leq \|q_{0}\|_{\sL^{\infty}(\R)}\|\gamma\|_{\sL^{1}(\R)}\|\partial_{2}\xi_{w}(t,\cdot\sk 0)\|_{\sL^{\infty}(\R)}\\
    &\overset{\cref{eq:improved_bound_partial_3_X}}{\leq}\|q_{0}\|_{\sL^{\infty}(\R)}\|\gamma\|_{\sL^{1}(\R)}\tfrac{\|v\|_{\sL^{\infty}(\R)}}{\uv}\e^{t\|v\|_{\sL^{\infty}(\R)}\|V'\|_{\sL^{\infty}((-M,M))}M'.}
\end{align*}
Here we have used the stability estimate of the IVP in \cref{cor:upper_lower_bounds_partial_3_X} to uniformly estimate the spatial derivative of the characteristics.
Thus, \(\|F[w]\|_{\sL^{\infty}((0,T_{1})\sk \sL^{\infty}(\R))}\leq M\) holds if
\begin{equation}
\e^{T_{1}\|v\|_{\sL^{\infty}(\R)}\|V'\|_{\sL^{\infty}((-M,M))}M'}\leq 42.\label{eq:only_one_time_required}
\end{equation}
We then pick the maximal \(T_{1}\in (0,T]\) satisfying this inequality.

Next, consider the spatial derivative of \(F\) on the time interval \(T_{2}\in(0,T_{1}]\) and this time choose \(w\in \Omega_{M}^{M'}(T_{2})\). Analogous to the previous estimate, we estimate for \(t\in(0,T_{2}]\) 
\begin{align*}
    &\|\partial_{2}F[w]\|_{\sL^{\infty}((0,t)\sk \sL^{\infty}(\R))}\leq \|q_{0}\|_{\sL^{\infty}(\R)}\vert\gamma\vert_{\sTV(\R)} \|\partial_{2}\xi_{w}(t,\cdot\sk 0)\|_{\sL^{\infty}(\R)}\\
    &\overset{\cref{eq:improved_bound_partial_3_X}}{\leq}\|q_{0}\|_{\sL^{\infty}(\R)}\vert\gamma\vert_{\sTV(\R)} \tfrac{\|v\|_{\sL^{\infty}(\R)}}{\uv}\e^{t\|v\|_{\sL^{\infty}(\R)}\|V'\|_{\sL^{\infty}((-M,M))}M'.}
\end{align*}
Here we have used the stability estimate of the IVP in \cref{cor:upper_lower_bounds_partial_3_X} to uniformly estimate the spatial derivative of the characteristics.
Thus, \(\|\partial_{2}F[w]\|_{\sL^{\infty}((0,T_{2})\sk \sL^{\infty}(\R))}\leq M'\) holds if 
\begin{equation}
\e^{T_{2}\|v\|_{\sL^{\infty}(\R)}\|V'\|_{\sL^{\infty}((-M,M))}M'}\leq 42.\label{eq:one_time_use_only} \end{equation}
As this is identical to the condition in \cref{eq:only_one_time_required}, we can indeed pick \(T_{2}=T_{1}\) as our considered time horizon. Based on the previous estimates, we thus have a self-mapping on the considered time horizon, i.e.\ 
\[
F\Big(\Omega_{M}^{M'}\big(T_{1}\big)\Big)\subseteq\Omega_{M}^{M'}\big(T_{1}\big).
\]
\item[Contraction:] Next, we show that the mapping \(F\) is a contraction for a yet to be determined \(T_{3}\in(0,T_{1}]\) in \(\sL^{\infty}((0,T_{3})\sk \sL^{\infty}(\R))\). To this end, take \(w,\tilde{w}\in \Omega_{M}^{M'}(T_{3})\) and estimate for \((t,x)\in [0,T_{1}]\times\R\)
\begin{align}
    &\vert F[w](t,x)-F[\tilde{w}](t,x)\vert \notag\\
    &\leq \|q_{0}\|_{\sL^{\infty}(\R)}\int_{\R} \vert \gamma(x-\xi_{w}(0,y\sk t))-\gamma(x-\xi_{\tilde{w}}(0,y\sk t))\vert \dd y\label{eq:4242}\\
    &\overset{\eqref{eq:improved_bound_partial_3_X}}{\leq} \tfrac{\|q_{0}\|_{\sL^{\infty}(\R)}\vert\gamma\vert_{\sTV(\R)}\|v\|_{\sL^{\infty}(\R)}}{\uv}\|\xi_{w}-\xi_{\tilde{w}}\|_{\sL^{\infty}((0,t)\times\R\times(0,t))}\e^{t\|v\|_{\sL^{\infty}(\R)}\|V'\|_{\sL^{\infty}((-M,M))}M'}\notag
\end{align}
In the last estimate we have used the following: 
\begin{enumerate}
    \item For \(f\in \sTV(\R)\cap\sL^{\infty}(\R)\) and diffeomorphisms \(g,h\) it holds (for the proof see for instance \cite[Lemma 2.4]{coron_pflug}) \label{item:1}
\[
\|f\circ g-f\circ h\|_{\sL^{1}(\R)}\leq \vert f\vert_{\sTV(\R)}\big\|g^{-1}-h^{-1}\big\|_{\sL^{\infty}(\R)}.
\]
It also holds for the characteristics that
\begin{align}
    \xi(t,\xi(\tau,x\sk t)\sk \tau)=x\quad \forall (t,x,\tau)\in\OT\times(0,T),\label{eq:characteristics_inverse_characteristics}
\end{align}
meaning that the inverse of the mapping \(x\mapsto \xi(t,x;\tau)\) is the mapping \(x\mapsto \xi(\tau,x;t)\).
This can be shown by approximating \(\xi\) by \(\xi_{\eps}\) with smooth \(v_{\eps},\lambda_{\eps}\) and the claim that for \(\eps\in\R_{>0}\) it holds that 
\[
  \xi_{\eps}(t,\xi_{\eps}(\tau,x\sk t)\sk \tau)=x\quad \forall x\in\R,\ (t,\tau)\in[0,T].
\]
However, this result was carried out in \cite[Lemma 2.6 Item 1]{pflug}. As we have the strong convergence of \(\xi_{\eps}\) to \(\xi\) by \cref{theo:stability}, this carries over to \cref{eq:characteristics_inverse_characteristics}.
\item We have identified in \cref{theo:stability} that
\begin{align*}
\lambda&\equiv V(w),\ \tilde{\lambda}\equiv V(\tilde{w}) \text{ and }\\ \mL_{2}&\:\max\Big\{\|\partial_{2}\lambda\|_{\sL^{\infty}((0,T_{3})\sk \sL^{\infty}(\R))},\|\partial_{2}\tilde{\lambda}\|_{\sL^{\infty}((0,T_{3})\sk \sL^{\infty}(\R))}\Big\}\\
&\leq\|V'\|_{\sL^{\infty}((-M,M))}\cdot\max\Big\{\|\partial_{2}w\|_{\sL^{\infty}((0,T_{3})\sk\sL^{\infty}(\R))},\|\partial_{2}\tilde{w}\|_{\sL^{\infty}((0,T_{3})\sk\sL^{\infty}(\R))}\Big\}\\
&\leq \|V'\|_{\sL^{\infty}((-M,M))}M'
\end{align*} 
where we have used the fact that \(w,\tilde{w}\in \Omega^{M'}_{M}(T_{3})\) and in particular \cref{eq:Omega}. 
\end{enumerate}
To obtain an estimate of \(\|\xi_{w}-\xi_{\tilde{w}}\|_{\sL^{\infty}((0,t)\times\R\times(0,t))}\) in terms of \(\|w-\tilde{w}\|_{\sL^{\infty}((0,T_{3})\sk \sL^{\infty}(\R))}\), we can again take advantage of \cref{theo:stability}, which yields
\begin{align*}
    \eqref{eq:4242}&\leq M'\|\xi_{w}-\xi_{\tilde{w}}\|_{\sL^{\infty}((0,t)\times\R\times(0,t))}\\
    &\overset{\eqref{eq:stability_x}}{\leq} M'\|v\|_{\sL^{\infty}(\R)}\e^{t\|v\|_{\sL^{\infty}(\R)}\|V'\|_{\sL^{\infty}((-M,M))}M'}\int_{0}^{t}\|V(w(s,\cdot))-V(\tilde{w}(s,\cdot))\|_{\sL^{\infty}(\R)}\dd s\\
    &\leq M'\|v\|_{\sL^{\infty}(\R)}t\|V'\|_{\sL^{\infty}((-M,M))}\e^{t\|v\|_{\sL^{\infty}(\R)}\|V'\|_{\sL^{\infty}((-M,M))}M'}\|w-\tilde{w}\|_{\sL^{\infty}((0,t)\sk \sL^{\infty}(\R))}.
\end{align*}
Reconnecting to \cref{eq:4242}, we thus have for small enough time \(T_{3}\in (0,T_{1}]\) (recall that in the previous estimate the right hand side consists of constants except for the term \( \|w-\tilde{w}\|_{\sL^{\infty}((0,t)\sk \sL^{\infty}(\R))}\))
\[
\|F[w]-F[\tilde{w}]\|_{\sL^{\infty}((0,T_{3})\sk \sL^{\infty}(\R))}\leq \tfrac{1}{2}\|w-\tilde{w}\|_{\sL^{\infty}((0,T_{3})\sk \sL^{\infty}(\R))},
\]
i.e., \(F\) is a contraction in \(\sL^{\infty}((0,T_{3})\sk \sL^{\infty}(\R))\).
\item[Concluding the fixed-point argument:] As \(M,M'\in\R_{>0}\) are fixed, we have proven \(F\) to be a self-mapping on \(\Omega_{M}^{M'}\big(T_{3}\big)\), and \(\Omega_{M}^{M'}\big(T_{3}\big)\)  -- thanks to the uniform bound \(M\) on the functions and \(M'\) on their spatial derivatives -- is closed in the topology induced by \(\sL^{\infty}((0,T_{3})\sk \sL^{\infty}(\R))\), we can apply Banach's fixed-point theorem \cite[Theorem 1.a]{zeidler} and obtain
\begin{equation}
\exists!\ w^{*}\in \Omega_{M}^{M'}\big(T_{3}\big):\ F[w^{*}]\equiv w^{*} \text{ on } (0,T_{3})\times\R.
\label{eq:fixed_point_w_star}
\end{equation}
\item[Constructing a solution to the conservation law:] Having obtained the existence and uniqueness of \(w^{*}\) as a fixed-point on a small time horizon, we use the method of characteristics as carried out in \cite[Theorem 2.20]{pflug} to state the solution as
\begin{equation}
    q(t,x)=q_{0}(\xi_{w^{*}}(t,x\sk 0))\partial_{2}\xi_{w^{*}}(t,x\sk 0), \qquad  (t,x)\in (0,T_{3})\times\R.\label{eq:solution_nonlocal}
\end{equation}
Note that due to \cref{theo:stability}, \(x\mapsto \xi_{w^{*}}(t,x\sk 0)\) is Lipschitz-continuous and strictly monotone increasing by \cref{cor:upper_lower_bounds_partial_3_X}. By \cref{lem:stability_C_L_1} \(\partial_{2}\xi_{w^{*}}(\ast,\cdot\sk 0)\in \sC\big([0,T_{3}];\sL^{1}_{\loc}(\R)\big)\) so that \(q\in \sC\big([0,T_{3}];\sL^{1}_{\loc}(\R)\big)\). The fact that \(q\in \sL^{\infty}\big((0,T_{3});\sL^{\infty}(\R)\big)\) is a direct consequence of \cref{eq:solution_formula}. It can easily be checked that \(q\) as in \cref{eq:solution_nonlocal} is a solution by plugging it into the definition of weak solutions in \cref{defi:weak_solution} and applying the substitution rule.

The uniqueness of solutions is more involved, but ultimately only an adaption of the proof in \cite{wang} and adjusted in \cite[Theorem 3.2]{pflug}.
Therefore, we only sketch the idea: In a first step we show by a proper choice of test functions in \cref{defi:weak_solution} that each weak solution can be stated in the form of \cref{eq:solution_nonlocal} with a proper nonlocal term \(w\). The next step is then to show that for the thus constructed solution, the nonlocal term satisfies the same fixed-point mapping as introduced in \cref{defi:fixed_point}. However, as this mapping has a unique fixed-point as we have proven previously, we have shown the uniqueness and are done.
\end{description}
\end{proof}
The previous result only demonstrates the existence of solutions on a small time horizon. Given the later \cref{ass:input_datum_additional}, we can show that even for general discontinuities in \(v\), i.e.,\ \(v\in \sL^{\infty}(\R\sk \R_{\geq \uv})\), the solution remains bounded on every finite time horizon. This is a key ingredient for extending the solution from small time in \cref{theo:existence_uniqueness_nonlocal_conservation_law} to arbitrary times.

To prove a weakened form of a maximum principle, we require the solutions to be smooth. Consequently, we first introduce the following (weak) stability result:
\begin{theo}[Weak stability of $q$ w.r.t.\ discontinuous velocity $v$, velocity $V$ and initial datum $q_0$]\label{theo:weak_stability}
Let \cref{ass:input_datum_conservation_laws} hold. Denote by 
\[
\{v_{\eps}\}_{\eps\in\R_{>0}}\subset \sC^{\infty}(\R),\ \big\{V_{\eps}\big\}_{\eps\in\R_{>0}}\subset\sC^{\infty}(\R) \text{ and } \{q_{0,\eps}\}_{\eps\in\R_{>0}}
\] the mollified versions of \(v,V,q_{0}\) convoluted with the standard mollifier outlined as in \cref{cor:ODE_approximation_smooth}.
Denote by \(T^{*}\in(0,T]\) the minimal time horizon of existence for the solution \(q,\{q_{\eps}\}_{\eps\in\R_{>0}}\) (where \(q\) is the solution to initial datum \(q_{0}\), discontinuous velocity \(v\) and Lipschitz velocity \(V\) and \(q_{\eps}\) the solution to initial datum \(q_{0,\eps}\), discontinuous velocity \(v_{\eps}\) and Lipschitz velocity \(V_{\eps}\)) as guaranteed in \cref{theo:existence_uniqueness_nonlocal_conservation_law}. Then, it holds that
  \begin{equation}
     \forall g\in \sC_{\text{c}}(\R) : \lim_{\eps\rightarrow 0}\max_{t\in[0,T^{*}]}\bigg\vert \int_{\R}\big( q(t,x)-q_{\eps}(t,x)\big) g(x)\dd x\bigg\vert=0.
     \label{eq:weak_continuity_solution}
  \end{equation}
 \end{theo}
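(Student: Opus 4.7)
The plan is to exploit the explicit representation $q(t,x)=q_{0}(\xi_{w}(t,x\sk 0))\,\partial_{2}\xi_{w}(t,x\sk 0)$ from \cref{theo:existence_uniqueness_nonlocal_conservation_law} together with the inverse identity \eqref{eq:characteristics_inverse_characteristics}, $\xi(t,\xi(\tau,x\sk t)\sk\tau)=x$, to transform the dual pairing, via the substitution $y=\xi_{w}(t,x\sk 0)$, into an integral against the initial datum,
\begin{equation*}
\int_{\R}q(t,x)\,g(x)\,\dd x=\int_{\R}q_{0}(y)\,g\bigl(\xi_{w}(0,y\sk t)\bigr)\,\dd y,
\end{equation*}
and analogously for $q_{\eps}$. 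First I would verify that $T^{*}$ may be chosen uniformly in $\eps$: the smallness conditions \eqref{eq:only_one_time_required}--\eqref{eq:one_time_use_only} in the proof of \cref{theo:existence_uniqueness_nonlocal_conservation_law} involve only $\|q_{0}\|_{\sL^{\infty}(\R)}$, $\|\gamma\|_{\sBV(\R)}$, $\|v\|_{\sL^{\infty}(\R)}$, $\uv$ and the local Lipschitz constant of $V$, all of which are controlled uniformly in $\eps$ by the properties of the mollification.

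Applying the change of variables and adding a zero, I would split
\begin{align*}
\int_{\R}(q-q_{\eps})(t,x)\,g(x)\,\dd x
&=\int_{\R}(q_{0}-q_{0,\eps})(y)\,g\bigl(\xi_{w}(0,y\sk t)\bigr)\,\dd y\\
&\quad+\int_{\R}q_{0,\eps}(y)\bigl[g\bigl(\xi_{w}(0,y\sk t)\bigr)-g\bigl(\xi_{w_{\eps}}(0,y\sk t)\bigr)\bigr]\,\dd y.
\end{align*}
The first summand is bounded uniformly in $t\in[0,T^{*}]$ by $\|g\|_{\sL^{\infty}(\R)}\|q_{0}-q_{0,\eps}\|_{\sL^{1}(K)}$, where $K\supset\bigcup_{t\in[0,T^{*}]}\xi_{w}(t,\supp g\sk 0)$ is a single bounded set (finite thanks to the Lipschitz estimate \cref{cor:upper_lower_bounds_partial_3_X}); this tends to zero because mollification of an $\sL^{\infty}$ function converges strongly in $\sL^{1}_{\loc}(\R)$. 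For the second summand, since $g$ is uniformly continuous with compact support and $\|q_{0,\eps}\|_{\sL^{\infty}(\R)}\leq\|q_{0}\|_{\sL^{\infty}(\R)}$, it suffices to prove the uniform convergence $\xi_{w_{\eps}}(0,\cdot\sk t)\to\xi_{w}(0,\cdot\sk t)$ in $\sL^{\infty}(K'')$ for every bounded $K''$, uniformly in $t\in[0,T^{*}]$.

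For this last convergence, \cref{theo:stability} combined with \cref{rem:stability_weak} and the weak vanishing $\lim_{\eps\to 0}\sup_{y\in\R}|\int_{0}^{y}(v-v_{\eps})|=0$ from \cref{cor:ODE_approximation_smooth} reduces matters to establishing $\|w-w_{\eps}\|_{\sL^{\infty}((0,T^{*})\times K''')}\to 0$ on every bounded $K'''$. To obtain this, I would evaluate the fixed-point equation and split
\begin{align*}
(w-w_{\eps})(t,x)&=\int_{\R}\bigl[\gamma(x-\xi_{w}(0,y\sk t))-\gamma(x-\xi_{w_{\eps}}(0,y\sk t))\bigr]\,q_{0}(y)\,\dd y\\
&\quad+\int_{\R}\gamma(x-\xi_{w_{\eps}}(0,y\sk t))\,(q_{0}-q_{0,\eps})(y)\,\dd y.
\end{align*}
The first piece is controlled via the $\sBV$-composition estimate (item~\ref{item:1} in the proof of \cref{theo:existence_uniqueness_nonlocal_conservation_law}) by $\|q_{0}\|_{\sL^{\infty}(\R)}\vert\gamma\vert_{\sTV(\R)}\|\xi_{w}(t,\cdot\sk 0)-\xi_{w_{\eps}}(t,\cdot\sk 0)\|_{\sL^{\infty}(\R)}$. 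The second, after substitution $z=\xi_{w_{\eps}}(0,y\sk t)$, is the convolution of $\gamma$ with $(q_{0}-q_{0,\eps})\circ\xi_{w_{\eps}}(t,\cdot\sk 0)\cdot\partial_{2}\xi_{w_{\eps}}(t,\cdot\sk 0)$; splitting into a localized part bounded by $\|\gamma\|_{\sL^{\infty}(\R)}\|q_{0}-q_{0,\eps}\|_{\sL^{1}(K)}$ (using $\sBV(\R)\hookrightarrow\sL^{\infty}(\R)$ together with the uniform Lipschitz bound on $\xi_{w_{\eps}}$ from \cref{cor:upper_lower_bounds_partial_3_X}) and a tail bounded by $2\|q_{0}\|_{\sL^{\infty}(\R)}\|\gamma\|_{\sL^{1}(\R\setminus B_{R})}\cdot\tfrac{\|v\|_{\sL^{\infty}(\R)}}{\uv}$ gives uniform smallness after first sending $\eps\to 0$ and then $R\to\infty$. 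Feeding the resulting bound back into the stability estimate for the characteristics yields a coupled Gr\"onwall inequality in the pair $(\|w-w_{\eps}\|_{\sL^{\infty}},\|\xi_{w}-\xi_{w_{\eps}}\|_{\sL^{\infty}})$, which closes to zero uniformly on $[0,T^{*}]$.

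The main obstacle is precisely this coupling: convergence of $w_{\eps}\to w$ on bounded sets and convergence of the characteristics are mutually dependent, and the fact that $q_{0}\in\sL^{\infty}(\R)$ only yields $\sL^{1}_{\loc}$-convergence of the mollification (never $\sL^{\infty}$) forces both the $\sBV$-structure of $\gamma$ and the integrated, weak form of the characteristic stability from \cref{rem:stability_weak} to be exploited in order to decouple the loop.
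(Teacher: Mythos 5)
Your overall architecture is the same as the paper's: uniform-in-$\eps$ existence time via the $\eps$-independent bounds $M,M'$, the change of variables $\int q(t,x)g(x)\dd x=\int q_{0}(y)g(\xi_{w}(0,y\sk t))\dd y$, the splitting into a data-error term and a characteristics-error term, the stability of the characteristics through \cref{theo:stability} and \cref{rem:stability_weak}, and a Gr\"onwall closure through the fixed-point identity for $w-w_{\eps}$. The place where you genuinely deviate is the treatment of the term $\int_{\R}\gamma\big(x-\xi_{\eps,w_{\eps}}(0,y\sk t)\big)\,(q_{0}-q_{0,\eps})(y)\dd y$, and this is where your argument has a gap. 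You handle it by an $R$-truncation of $\gamma$ plus $\sL^{1}_{\loc}$-convergence of $q_{0,\eps}\to q_{0}$, which only gives smallness of $w-w_{\eps}$ for $x$ in a \emph{fixed bounded} set: mollification of an $\sL^{\infty}$ function converges in $\sL^{1}$ on bounded sets, but \emph{not} uniformly over translates of such sets (take $q_{0}$ with local oscillation scale shrinking at infinity), so $\sup_{x\in\R}\|q_{0}-q_{0,\eps}\|_{\sL^{1}(B_{R}(x))}\not\to 0$ in general and you cannot conclude $\|w-w_{\eps}\|_{\sL^{\infty}((0,T^{*})\sk\sL^{\infty}(\R))}\to 0$. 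At the same time, your bound for the other piece, $\|q_{0}\|_{\sL^{\infty}}\vert\gamma\vert_{\sTV}\|\xi_{w}-\xi_{\eps,w_{\eps}}\|_{\sL^{\infty}(\R)}$, uses the $\sTV$-composition estimate globally, i.e.\ it needs the characteristic difference uniformly over \emph{all} initial points $y\in\R$, which via \cref{theo:stability}/\cref{rem:stability_weak} requires exactly the global smallness of $V(w)-V_{\eps}(w_{\eps})$ that your localization cannot provide. So the "coupled Gr\"onwall inequality" you invoke mixes local and global norms and does not close as written; making a purely local (domain-of-dependence) version work would require localizing the $\sTV$-composition estimate and the characteristic stability as well, which you have not done.

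The paper closes the loop differently and this is the key trick you are missing: it integrates by parts in $y$, writing the data-error term as $\int_{\R}\gamma'\big(\cdot-\xi_{w}(0,y\sk s)\big)\partial_{2}\xi_{w}(0,y\sk s)\int_{0}^{y}(q_{0}-q_{0,\eps})(z)\dd z\dd y$, so that (using $\gamma\in\sBV(\R)$ and the strict monotonicity of $y\mapsto\xi_{w}(0,y\sk s)$ from \cref{cor:upper_lower_bounds_partial_3_X}) it is bounded by $\vert\gamma\vert_{\sTV(\R)}\sup_{y\in\R}\big\vert\int_{0}^{y}(q_{0}-q_{0,\eps})(z)\dd z\big\vert$. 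This primitive-type quantity vanishes \emph{uniformly over} $\R$ as $\eps\to0$ (bounded by $2\eps\|q_{0}\|_{\sL^{\infty}(\R)}$, the same computation as in \cref{cor:ODE_approximation_smooth} for $v$), exactly like the weak norm $\sup_{y}\vert\int_{0}^{y}(v-v_{\eps})\vert$ already used for the discontinuous velocity. With all inhomogeneities measured in such global weak norms, every term in the Gr\"onwall loop can be taken in $\sL^{\infty}(\R)$ and the closure is immediate. To repair your proof, replace the truncation argument for the $(q_{0}-q_{0,\eps})$-term by this integration by parts against $\gamma'$ (or carry out a full, consistent localization of every estimate, which is substantially more work); the remainder of your argument, including the final passage to the test function via uniform continuity of $g$ and the uniform bi-Lipschitz bounds on the characteristics, is sound and matches the paper's use of \cref{lem:convergence_composition_L_1}.
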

\begin{proof}

We start by showing that such a time horizon \(T^{*}\) exists uniformly in \(\eps\). Recalling the proof of \cref{theo:existence_uniqueness_nonlocal_conservation_law}, the properties of the standard mollifier for each \(\eps\in\R_{>0}\) enable us to define the upper bounds on the nonlocal term as in \cref{eq:defi_M}
\begin{equation}
\begin{aligned}
    M_{\eps}&\:42\|\gamma\|_{\sL^{1}(\R)}\|q_{0,\eps}\|_{\sL^{\infty}(\R)}\tfrac{\|v_{\eps}\|_{\sL^{\infty}(\R)}}{\uv}&&\leq 42\|\gamma\|_{\sL^{1}(\R)}\|q_{0}\|_{\sL^{\infty}(\R)}\tfrac{\|v\|_{\sL^{\infty}(\R)}}{\uv}\eqqcolon M\\
    M_{\eps}'&\:42\vert\gamma\vert_{\sTV(\R)}\|q_{0,\eps}\|_{\sL^{\infty}(\R)}\tfrac{\|v_{\eps}\|_{\sL^{\infty}(\R)}}{\uv}&&\leq 42\vert\gamma\vert_{\sTV(\R)}\|q_{0}\|_{\sL^{\infty}(\R)}\tfrac{\|v\|_{\sL^{\infty}(\R)}}{\uv}\eqqcolon M'.
\end{aligned}
\label{eq:M_revisited}
\end{equation}
However, this means that we can take as upper bounds uniformly \(M,M'\). Looking into the self-mapping condition in \cref{eq:only_one_time_required}, it then reads in our case
\[
\exp\big(T_{1}\|v_{\eps}\|_{\sL^{\infty}(\R)}\|V_{\eps}'\|_{\sL^{\infty}((-M,M))}M'\big)\leq  42,
\]
which can also be replaced by the stronger form
\[
\exp\big(T_{1}\|v\|_{\sL^{\infty}(\R)}\|V'\|_{\sL^{\infty}((-M,M))}M'\big)\leq  42.
\]
Now choosing \(T_{1}\) to satisfy the previous inequality this is by construction \(\eps\) invariant. The identical argument can be made for the estimate in \cref{eq:only_one_time_required}, so that for the chosen \(T_{1}\) the mapping \(F\) in \cref{defi:fixed_point} is a self-mapping on \(\Omega_{M}^{M'}(T_{1})\), as in \cref{eq:Omega}. So the only point that remains is to check whether the fixed-point mapping is also a contraction uniformly in \(\eps\in\R_{>0}\) for a small time horizon. Recollecting the contraction estimate starting in \cref{eq:4242}, we have for \(T_{2}\in (0,T_{1}]\)
\begin{align*}
    &\|F[w]-F[\tilde{w}]\|_{\sL^{\infty}((0,T_{2});\sL^{\infty}(\R))}\\
    &\leq \|v_{\eps}\|_{\sL^{\infty}(\R)}T_{2}\|V_{\eps}'\|_{\sL^{\infty}((-M,M))}M'\!\e^{T_{2}\|v_{\eps}\|_{\sL^{\infty}(\R)}\|V_{\eps}'\|_{\sL^{\infty}((-M,M))}M'}\!\!\|w-\tilde{w}\|_{\sL^{\infty}((0,T_{2})\sk \sL^{\infty}(\R))}\\
    &\leq \|v\|_{\sL^{\infty}(\R)}T_{2}\|V'\|_{\sL^{\infty}((-M,M))}M'\!\e^{T_{2}\|v\|_{\sL^{\infty}(\R)}\|V'\|_{\sL^{\infty}((-M,M))}M'}\!\!\|w-\tilde{w}\|_{\sL^{\infty}((0,T_{2})\sk \sL^{\infty}(\R))}.
\end{align*}
Again choosing \(T_{2}\in (0,T_{1}]\) so that 
\[
\|v\|_{\sL^{\infty}(\R)}T_{2}\|V'\|_{\sL^{\infty}((-M,M))}M'\!\e^{T_{2}\|v\|_{\sL^{\infty}(\R)}\|V'\|_{\sL^{\infty}((-M,M))}M'}\leq \tfrac{1}{2}
\]
is invariant on \(\eps\in\R_{>0}\), we can set \(T^{*}\:T_{2}\in\R_{>0}\) and have found the time horizon on which the existence of solutions is guaranteed for all \(\eps\in\R_{>0}\) simultaneously.

Next, we prove the claimed continuity as stated in \cref{eq:weak_continuity_solution}.
We recall the solution formula in \cref{eq:solution_formula} and obtain for a \(g\in \sC_{\loc}(\R)\) and \(t\in[0,T^{*}]\) and for \(\eps\in\R_{>0}\)
\begin{align}
&\bigg\vert \int_{\R}\big( q(t,x)-q_{\eps}(t,x)\big) g(x)\dd x\bigg\vert \notag\\
&=\bigg\vert \int_{\R}\Big( q_{0}(\xi_{w}(t,x\sk 0))\partial_{2}\xi_{w}(t,x\sk 0)-q_{0,\eps}(\xi_{\eps,w_{\eps}}(t,x\sk 0))\partial_{2}\xi_{\eps,w_{\eps}}(t,x\sk 0)\Big) g(x)\dd x\bigg\vert \notag\\
&=\bigg\vert \int_{\R}q_{0}(y)g\big(\xi_{w}(0,y\sk t)\big)-q_{0,\eps}(y)g\big(\xi_{\eps,w_{\eps}}(0,y\sk t)\big)\dd y\bigg\vert \notag\\
&\leq \bigg\vert \int_{\R}\big(q_{0}(y)-q_{0,\eps}(y)\big)g\big(\xi_{w}(0,y\sk t)\big)\dd y\bigg\vert \notag\\
&\quad + \bigg\vert \int_{\R}q_{0,\eps}(y)\Big(g\big(\xi_{w}(0,y\sk t)\big)-g\big(\xi_{\eps,w_{\eps}}(0,y\sk t)\big)\Big)\dd y\bigg\vert \notag\\
&\leq \|g\|_{\sL^{\infty}(\R)}\|q_{0}-q_{0,\eps}\|_{\sL^{1}(\supp(g)+\|v\|_{\sL^{\infty}(\R)}\mL T(-1,1))}\notag\\
&\quad +\|q_{0}\|_{\sL^{\infty}(\R)}\big\|g\circ \xi_{w}(0,\cdot\sk t)-g\circ \xi_{\eps,w_{\eps}}(0,\cdot\sk t)\big\|_{\sL^{1}(\R)}.\label{eq:estimate_weak_star_convergence_2}
\end{align}
Recalling the bounds on the nonlocal term in \cref{eq:M_revisited}
as detailed in \cref{eq:Omega} for \(s\in[0,T^{*}]\), it holds by \cref{rem:stability_weak} and in particular \cref{eq:stability_weak} together with \cref{theo:stability} that
\begin{align}
&\|\xi_{\eps,w_{\eps}}(s,\cdot\sk  \ast)-\xi_{w}(s,\cdot\sk \ast)\|_{\sL^{\infty}((0,T^{*})\sk \sL^{\infty}(\R))}\notag\\
&\leq \|v\|_{\sL^{\infty}(\R)}\e^{T^{*}\|v\|_{\sL^{\infty}(\R)}\mathcal L_2}\int_{0}^{T^{*}}\|V(w(s,\cdot))-V_{\eps}(w_{\eps}(s,\cdot))\|_{\sL^{\infty}(\R)}\dd s\notag\\
    &\quad +\Big(\|v\|_{\sL^{\infty}(\R)}\e^{T^{*}\|v\|_{\sL^{\infty}(\R)}\mathcal L_2}T^{*}\mL_{2}+1\Big)\tfrac{2\|v\|_{\sL^{\infty}(\R)}}{\uv^{2}}\sup_{y\in\R}\Big\vert\int_{0}^{y}v(s)-v_{\eps}(s)\dd s\Big\vert\notag\\
    &\leq \|v\|_{\sL^{\infty}(\R)}\e^{T^{*}\|v\|_{\sL^{\infty}(\R)}\mathcal L_2}\|V'\|_{\sL^{\infty}((-M,M))}\int_{0}^{T^{*}}\|w(s,\cdot)-w_{\eps}(s,\cdot)\|_{\sL^{\infty}(\R)}\dd s\label{eq:xi_w_xi_tilde_w}\\
    &\quad + \|v\|_{\sL^{\infty}(\R)}\e^{T^{*}\|v\|_{\sL^{\infty}(\R)}\mathcal L_2}T^{*}\|V-V_{\eps}\|_{\sL^{\infty}((-M,M))}\notag\\
    &\quad +\Big(\|v\|_{\sL^{\infty}(\R)}\e^{T^{*}\|v\|_{\sL^{\infty}(\R)}\mathcal L_2}T^{*}\mL_{2}+1\Big)\tfrac{2\|v\|_{\sL^{\infty}(\R)}}{\uv^{2}}\sup_{y\in\R}\Big\vert\int_{0}^{y}v(s)-v_{\eps}(s)\dd s\Big\vert.\notag
\end{align}
Applying the fixed-point identity \cref{defi:fixed_point} and \cref{eq:fixed_point_w_star}, we end up with
    \begin{align*}
   &\int_{0}^{T^{*}}\|w(s,\cdot)-w_{\eps}(s,\cdot)\|_{\sL^{\infty}(\R)}\dd s\\
   &= \int_{0}^{T^{*}}\Big\| \int_{\R}\!\gamma\big(\cdot-\xi_{w}(0,y\sk s)\big)q_{0}(y) -\gamma\big(\cdot-\xi_{\eps,w_{\eps}}(0,y\sk s)\big)q_{0,\eps}(y)\dd y\Big\|_{\sL^{\infty}(\R)}\dd s\\
   &\leq \int_{0}^{T^{*}}\Big\| \int_{\R}\!\gamma\big(\cdot-\xi_{w}(0,y\sk s)\big)(q_{0}(y)-q_{0,\eps}(y))\dd y\Big\|_{\sL^{\infty}(\R)}\\
   &\quad + \Big\|\int_{\R}\big(\gamma\big(\cdot-\xi_{w}(0,y\sk s)\big)-\gamma\big(\cdot-\xi_{\eps,w_{\eps}}(0,y\sk s)\big)\big)q_{0,\eps}(y)\dd y\Big\|_{\sL^{\infty}(\R)}\dd s\\
   &\leq \int_{0}^{T^{*}}\Big\| \int_{\R}\!\gamma\big(\cdot-\xi_{w}(0,y\sk s)\big)(q_{0}(y)-q_{0,\eps}(y))\dd y\Big\|_{\sL^{\infty}(\R)}\\
   &\quad + \Big\|\int_{\R}\big(\gamma\big(\cdot-\xi_{w}(0,y\sk s)\big)-\gamma\big(\cdot-\xi_{\eps,w_{\eps}}(0,y\sk s)\big)\big)q_{0,\eps}(y)\dd y\Big\|_{\sL^{\infty}(\R)}\dd s\\
   &\leq \int_{0}^{T^{*}}\Big\| \int_{\R}\!\gamma'\big(\cdot-\xi_{w}(0,y\sk s)\big)\partial_{2}\xi_{w}(0,y\sk s)\int_{0}^{y}(q_{0}(z)-q_{0,\eps}(z))\dd z\dd y\Big\|_{\sL^{\infty}(\R)}\\
 &\quad +\|q_{0}\|_{\sL^{\infty}(\R)}\vert\gamma\vert_{\sTV(\R)}\int_{0}^{T^{*}}\!\!\!\!\big\|\xi_{w}(s,\cdot\sk \ast)- \xi_{\eps,w_\eps}(s,\cdot\sk \ast)\big\|_{\sL^{\infty}((0,T)\sk \sL^{\infty}(\R))}\dd s\\
   &\leq \vert\gamma\vert_{\sTV(\R)}T^{*}\sup_{y\in\R}\Big\vert\int_{0}^{y}q_{0}(y)-q_{0,\eps}(y)\dd y\Big\vert\\
    &\quad +\|q_{0}\|_{\sL^{\infty}(\R)}\vert\gamma\vert_{\sTV(\R)}\int_{0}^{T^{*}}\!\!\!\!\big\|\xi_{w}(s,\cdot\sk \ast)- \xi_{\eps,w_\eps}(s,\cdot\sk \ast)\big\|_{\sL^{\infty}((0,T)\sk \sL^{\infty}(\R))}\dd s.
\end{align*}
In the last estimate we have again used what was described in \cref{item:1} in the proof of \cref{theo:existence_uniqueness_nonlocal_conservation_law} and the assumptions on the involved datum \cref{ass:input_datum_conservation_laws}, particularly \(\gamma\in \sBV(\R)\).

As \(T^{*}\) was arbitrary (but small enough so that solutions still exist), we can apply Gr\"onwall's inequality \cite[Chapter I, III Gronwall's inequality]{walter} and, recollecting all previous terms, obtain
\begin{align*}
&\|\xi_{\eps,w_{\eps}}-\xi_{w}\|_{\sL^{\infty}((0,T^{*})^{2},\sL^{\infty}(\R))}\\
&\leq \bigg(\Big(\|v\|_{\sL^{\infty}(\R)}\e^{T^{*}\|v\|_{\sL^{\infty}(\R)}\mathcal L_2}T^{*}\mL_{2}+1\Big)\tfrac{2\|v\|_{\sL^{\infty}(\R)}}{\uv^{2}}\sup_{y\in\R}\Big\vert\int_{0}^{y}v(s)-v_{\eps}(s)\dd s\Big\vert\\
&\quad +  \|v\|_{\sL^{\infty}(\R)}\e^{T^{*}\|v\|_{\sL^{\infty}(\R)}\mathcal L_2}\|V'\|_{\sL^{\infty}((-M,M))}\|\gamma\|_{\sL^{\infty}(\R)}T^{*}\sup_{y\in\R}\Big\vert\int_{0}^{y}q_{0}(z)-q_{0,\eps}(z)\dd z\Big\vert\\
&\quad + \|v\|_{\sL^{\infty}(\R)}\e^{T^{*}\|v\|_{\sL^{\infty}(\R)}\mathcal L_2}T^{*}\|V-V_{\eps}\|_{\sL^{\infty}((-M,M))}
\bigg)\\
&\ \cdot \|v\|_{\sL^{\infty}(\R)}\e^{T^{*}\|v\|_{\sL^{\infty}(\R)}\mathcal L_2}\|V'\|_{\sL^{\infty}((-M,M))}\|q_{0}\|_{\sL^{\infty}(\R)}\vert\gamma\vert_{\sTV(\R)}.
\end{align*}
However, this means that \(\xi_{w}-\xi_{\eps,w_{\eps}}\) is small in the uniform topology for \(\eps\in\R_{>0}\) small.

Thanks to the lower bounds on the spatial derivatives on \(\xi_{w},\xi_{\eps,w_{\eps}}\), i.e., thanks to the fact that they are diffeomorphisms in space with a Lipschitz constant from below which is greater than zero (see \cref{cor:upper_lower_bounds_partial_3_X}), we can apply
\cref{lem:convergence_composition_L_1} on \cref{eq:estimate_weak_star_convergence_2} and obtain the claimed continuity in \cref{eq:weak_continuity_solution}.
\end{proof}
The previous theory enables to have smooth solutions when assuming smooth initial datum and smooth velocities. Even more, we can later use the previous approximation to derive bounds on the smoothed solution (as considered in the following \cref{lem:classical_solutions}). These bounds carry over to the weak solutions. Let us also state that this smoothness of solutions is in line with the regularity results in \cite{pflug}.
\begin{lem}[Smooth solutions for smooth datum]\label{lem:classical_solutions}
Let \cref{ass:input_datum_conservation_laws} hold. In addition,
\[
q_{0}\in \sC^{\infty}(\R),\ v\in \sC^{\infty}(\R),\ V\in \sC^{\infty}(\R).
\]
Then, there exists \(T^{*}\in\R_{>0}\) so that the weak solution \[q\in\sC\big([0,T^{*}];\sL^{1}_{\;\loc}(\R)\big)\cap \sL^{\infty}\big((0,T^{*});\sL^{\infty}(\R)\big)\] in \cref{defi:weak_solution} of the (now continuous) nonlocal conservation law in \cref{defi:disc_conservation_law} is a classical solution and
\[
q\in \sC^{\infty}(\Omega_{T^{*}}).
\]
\end{lem}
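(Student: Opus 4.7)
The strategy is a bootstrap argument anchored on the fixed-point structure from \cref{theo:existence_uniqueness_nonlocal_conservation_law} and the smooth-dependence theory for ODEs with smooth right-hand side. Since $v,V\in \sC^\infty$, the characteristic ODE
\[
\partial_\tau \xi_{w^*}(t,x\sk \tau)=v\big(\xi_{w^*}(t,x\sk \tau)\big)\,V\big(w^*(\tau,\xi_{w^*}(t,x\sk \tau))\big)
\]
has a right-hand side whose regularity in $\xi$ is controlled by the regularity of $w^*$ in $x$. I would use this to upgrade the spatial regularity of $w^*$ one step at a time, feed the improvement back into the flow $\xi_{w^*}$, then into the solution formula $q(t,x)=q_0(\xi_{w^*}(t,x\sk 0))\partial_{2}\xi_{w^*}(t,x\sk 0)$, and finally close the loop through the convolutional identity $w^*(t,\cdot)=\gamma\ast q(t,\cdot)$ that comes from applying the change of variable $z=\xi_{w^*}(0,y\sk t)$ (valid by \cref{cor:upper_lower_bounds_partial_3_X} since that map is a bi-Lipschitz diffeomorphism) to the fixed-point formula.

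In detail: starting from $w^*\in\sL^\infty_t\sW^{1,\infty}_x$ supplied by the fixed-point argument, classical ODE theory for Carath\'eodory equations with Lipschitz state-dependence gives $\xi_{w^*}\in \sC^1$ in $\tau$ and Lipschitz in the initial datum $x$; combined with $q_0\in \sC^\infty$ this already yields $q\in\sC(\Omega_{T^*})$ that solves the PDE in the weak sense of \cref{defi:weak_solution}. Passing to a classical $\sC^1$ solution then follows by substituting into \cref{defi:weak_solution} and integrating by parts, since all pointwise regularity needed is now available. For the bootstrap, I would inductively prove that $w^*,\xi_{w^*},q\in \sC^k$ in $x$ for every $k\in\N$: the identity $\partial_x^k w^*(t,\cdot)=\gamma\ast\partial_x^k q(t,\cdot)$ (derivatives transferred onto the smooth factor $q$) shows $w^*$ inherits spatial smoothness from $q$; smooth dependence on initial data for ODEs with $\sC^k$ state-Lipschitz right-hand side shows $\xi_{w^*}$ inherits spatial smoothness from $w^*$; and the product-and-chain rule applied to the solution formula, together with the linear ODEs satisfied by $\partial_{2}^{j}\xi_{w^*}$, closes the induction. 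Time regularity is then automatic: the PDE itself reads $q_t=-\partial_x(vV(w^*)\,q)$, so each extra time derivative is paid for by one spatial derivative, and iterating gives $q\in \sC^\infty(\Omega_{T^*})$.

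\textbf{Main obstacle.} The subtle point is that the solution formula contains $\partial_{2}\xi_{w^*}$, which \emph{a priori} has one fewer spatial derivative than $\xi_{w^*}$; a naive iteration would therefore lose a derivative at each pass. The compensating mechanism is the convolution $w^*=\gamma\ast q$: since $\gamma\in \sBV(\R)$, its distributional derivative is a finite Radon measure, so one spatial derivative of $w^*$ can be exchanged for one of $q$ at no regularity cost. Managing this accounting — so that each iteration strictly advances rather than stalls — is the only nontrivial step; once arranged, the induction delivers $\sC^\infty$-regularity in space, and the PDE supplies the joint regularity in $(t,x)$.
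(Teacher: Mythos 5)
Your overall architecture (fixed point $w^*$, characteristics, solution formula \cref{eq:solution_nonlocal}, then trading time derivatives for space derivatives through the PDE) is the same as the paper's, and you have correctly located the crux: the factor $\partial_{2}\xi_{w^*}$ in the solution formula costs one spatial derivative, while convolution with $\gamma\in\sBV(\R)$ gains one, since $\partial_x^{k+1}(\gamma\ast q)=D\gamma\ast\partial_x^{k}q$ with $D\gamma$ a finite measure. The gap is in how you propose to use this. The loop $q\in\sC^{k}\Rightarrow w^*=\gamma\ast q\in\sC^{k+1}\Rightarrow$ (the velocity $v\,V(w^*)$ is $\sC^{k+1}$ in space) $\Rightarrow\xi_{w^*}(t,\cdot\sk 0)\in\sC^{k+1}\Rightarrow\partial_{2}\xi_{w^*}(t,\cdot\sk 0)\in\sC^{k}\Rightarrow q=q_0(\xi_{w^*})\,\partial_{2}\xi_{w^*}\in\sC^{k}$ is exactly regularity-neutral: the one derivative gained by the convolution is consumed by $\partial_{2}\xi_{w^*}$, so the iteration does not ``strictly advance'' --- it stalls at whatever regularity you start from. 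Starting from what the fixed-point argument actually supplies ($w^*\in\sL^{\infty}\big((0,T)\sk\sW^{1,\infty}(\R)\big)$, hence $\partial_{2}\xi_{w^*}$ merely in $\sL^{\infty}$), the loop never leaves that level; in particular your intermediate claim that $q\in\sC(\Omega_{T^*})$ ``already'' follows is unwarranted, since $\partial_{2}\xi_{w^*}$ is at that stage only bounded, not continuous (the time-continuity of \cref{lem:stability_C_L_1} is in $\sL^{1}_{\loc}$, not pointwise). No rearrangement of this static bookkeeping can fix it, because nothing in the loop sees where the smoothness must come from, namely the smooth initial datum.

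What is needed is to propagate regularity in time rather than to gain it by iterating the self-consistency: for each fixed $k$, write the variational equation for $\partial_{2}^{k+1}\xi_{w^*}$ --- whose coefficients contain $\partial_x^{k+1}\big(vV(w^*)\big)$ and hence $D\gamma\ast\partial_x^{k}q$, so the top-order unknown enters only \emph{linearly} --- couple it with the expression of $\partial_x^{k}q$ through $\partial_{2}^{j}\xi_{w^*}$, $j\le k+1$, and close the coupled system with a Gr\"onwall estimate on $[0,T^{*}]$; equivalently, rerun the contraction argument of \cref{theo:existence_uniqueness_nonlocal_conservation_law} in $\sL^{\infty}\big((0,T)\sk\sW^{k+1,\infty}(\R)\big)$ and use the linear top-order structure to keep the time horizon of the $k=0$ step (persistence of regularity). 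This is where $q_{0}\in\sC^{\infty}(\R)$ and the bound $\|\partial_x^{k+1}w^*\|_{\sL^{\infty}}\le\vert\gamma\vert_{\sTV(\R)}\|\partial_x^{k}q\|_{\sL^{\infty}}$ do quantitative work. Note also that smoothness of each Picard iterate of the map in \cref{defi:fixed_point} does not help by itself: smoothness does not pass to the uniform limit without exactly these uniform higher-order bounds. To be fair, the paper's own proof is a brief qualitative version of the same circle of ideas; but as a complete argument, yours is missing the order-by-order a priori estimates, and with only the mechanism you describe it does not even yield $q\in\sC^{1}$, let alone $\sC^{\infty}(\Omega_{T^{*}})$.
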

\begin{proof}
From \cref{theo:existence_uniqueness_nonlocal_conservation_law} we know that there exists a solution on an assured time horizon \([0,T^{*}]\) with a sufficiently small \(T^{*}\in\R_{>}\). Due to the regularity of the involved functions, we can take advantage of the fixed-point equation in \cref{defi:fixed_point} as follows. As \(q_{0}\) is smooth, the convolution means that the solution of the fixed-point problem is smooth provided the characteristics \(\xi_{w}\) do not destroy regularity. However, \(\xi_{w}\) as in \cref{eq:char} is -- for given \(w\) a smooth solution to the fixed-point problem -- the solution of an IVP with a smooth right hand side and is thus smooth. This explains why the nonlocal term \(w\) is smooth, the characteristics are smooth in each component and finally, looking at the solution formula in \cref{eq:solution_nonlocal}, the solution \(q\) is also smooth. This solution therefore satisfies the PDE point-wise and is a classical solution.
\end{proof}
\begin{rem}[Regularity of solutions]
It is possible -- similar to the results in \cite[Section 5]{pflug} -- to obtain  regularity results in \(\sW^{k,p}\) for properly chosen initial datum and velocities and \((k,p)\in \N_{\geq0}\times\big(\R_{\geq 1}\cup\{\infty\}\big)\) instead of \(\sC^{\infty}\) solutions as in \cref{lem:classical_solutions}. However, we do not go into details as we only require smooth solutions in the following analysis.
\end{rem}

\subsection{Maximum principles}\label{subsec:maximum_principle}
First we will list some assumptions that are particularly interesting for traffic flow modelling. They are inspired by classical maximum principles as laid out in \cite{scialanga,pflug}:
\begin{ass}\label{ass:input_datum_additional}
In addition to \cref{ass:input_datum_conservation_laws}, we assume
\begin{multicols}{2}
\begin{itemize}
    \item \(V'\leqq 0\)
    \item \(\supp(\gamma)\subset\R_{\geq0}\) 
    \item \(\gamma\) monotonically decreasing on \(\R_{>0}\)
    \item \(q_{0}\in\sL^{\infty}(\R;\R_{\geq0})\), i.e.,\ nonnegative.
\end{itemize}
\end{multicols}
\end{ass}
The assumption that \(V\) is monotonically decreasing is very common in traffic flow (compare with the classical LWR model in traffic \cite{lwr_1,lwr_2,greenshields}) as it states that the velocity must decrease with higher density. The assumption that \(q_{0}\) is nonnegative and essentially bounded is inspired by interpreting solutions as traffic densities on roads that have limited capacity.

Finally, the assumptions on the kernel \(\gamma\) ensure that density further ahead does not impact the nonlocal term as much as density immediately ahead. Traffic density behind generally does not matter. However, new models are emerging that incorporate nudging (looking behind) (see for example \cite{karafyllis2020analysis}). General maximum principles cannot be expected for looking behind nonlocal terms.

\begin{theo}[A maximum principle/uniform bounds]\label{theo:maximum_principle}
Let \cref{ass:input_datum_additional} hold, and consider the following two cases for the weak solution (in the sense of \cref{defi:weak_solution}) of the discontinuous nonlocal conservation law in \cref{defi:disc_conservation_law}. 
\begin{description}
\item[\textbf{Monotonically increasing $\boldsymbol v$}:]
The weak solution exists for each \(T\in\R_{>0}\) and satisfies the classical maximum principle
\begin{equation}
    0\leq q(t,x)\leq \|q_{0}\|_{\sL^{\infty}(\R)}\quad \forall (t,x)\in (0,T)\times\R \text{ a.e.}
\end{equation}
\item[\textbf{Initial datum $\sL^{1}$ integrable and $\gamma$ more regular}:] In detail, assuming
\[q_{0}\in \sL^{1}(\R;\R_{\geq0})\cap \sL^{\infty}(\R;\R_{\geq 0})\ \wedge\ \gamma \in \sW^{1,\infty}(\R_{> 0};\R_{\geq 0}) \cap \sL^1(\R_{>0} ; \R_{\geq 0})\ \wedge\  \gamma' \leqq 0,
\] the weak solution exists for every \(T\in\R_{>0}\) with the following bounds for any $\delta \in\R_{>0}$:
\begin{itemize}
    \item if \(\esssup_{s\in X(q_{0},\gamma)} V'(s)<0\) it holds \(\forall (t,x)\in \OT \text{ a.e.}\)
    \[
    0\leq q(t,x)\leq \max\Big\{\tfrac{\|v\cdot q_{0}\|_{\sL^{\infty}(\R)}}{\uv}\sk  \tfrac{\|v\|_{\sL^{\infty}(\R)}}{\uv}\tfrac{\|V'\|_{\sL^{\infty}(X(q_{0},\gamma)+(-\delta,\delta))}}{-\esssup\limits_{s\in X(q_{0},\gamma)+(-\delta,\delta)}V'(s)}\|q_{0}\|_{\sL^{1}(\R)}\Big\}
    \]
    \item if \(\esssup_{s\in X(q_{0},\gamma)+(-\delta,\delta)} V'(s)=0\) it holds \(\forall (t,x)\in\OT\text{ a.e.}\)
    \[
    0\leq q(t,x)\leq \tfrac{\| v q_0\|_{\sL^{\infty}(\R)}}{\uv}\exp\Big({t\|v\|_{\sL^{\infty}(\R)}\|V'\|_{\sL^{\infty}(X(q_{0},\gamma)+(-\delta,\delta))}\gamma(0)\|q_{0}\|_{\sL^{1}(\R)}}\Big),
    \]
\end{itemize}
with 
\begin{equation}
X(q_{0},\gamma)\: \big(0,\|q_{0}\|_{\sL^{1}(\R)}\|\gamma\|_{\sL^{1}(\R)}\big)\subset\R.
\end{equation}
\end{description}
\end{theo}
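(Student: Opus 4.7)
The plan is to derive a priori bounds for the classical solutions $q_{\eps}$ arising from the smoothed data $(v_{\eps},V_{\eps},q_{0,\eps})$ of \cref{lem:classical_solutions} and to transfer them to the weak solution through the weak continuity \cref{eq:weak_continuity_solution} established in \cref{theo:weak_stability}. Since the bounds I seek depend only on $\uv$, $\|v\|_{\sL^{\infty}(\R)}$, $\|q_{0}\|_{\sL^{1}\cap\sL^{\infty}}$, $\gamma$ and $V$---quantities stable under mollification---the passage $\eps\downarrow 0$ is legitimate. Once a uniform $\sL^{\infty}(\R)$-bound on $q$ is in hand, the small-time existence from \cref{theo:existence_uniqueness_nonlocal_conservation_law} can be iterated, starting from the bounded state at each step, to produce existence on every $T\in\R_{>0}$.

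For a classical solution the decisive calculation is the ODE along the characteristic $\xi(s)\equivd\xi_{w}(t,x\sk s)$. Differentiating the conservation law along $\xi$ and absorbing the $v'$-contribution via $v'(\xi)V(w)=\tfrac{d}{ds}\log v(\xi(s))$ yields, for $Q(s)\equivd v(\xi(s))\,q(s,\xi(s))$,
\[
\tfrac{d}{ds}Q(s)=-v(\xi(s))\,V'\!\big(w(s,\xi(s))\big)\,\partial_{x}w(s,\xi(s))\,Q(s),
\]
hence
\[
q(t,x)=\tfrac{v(\xi(0))}{v(x)}\,q_{0}(\xi(0))\,\exp\!\Big(-\!\!\int_{0}^{t}\!v(\xi)\,V'(w)\,\partial_{x}w\,ds\Big).
\]
Under \cref{ass:input_datum_additional} every factor is nonnegative, giving $q\geq 0$ at the approximation level; this property is preserved in the limit by \cref{eq:weak_continuity_solution}.

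In the monotonically increasing $v$ case, mollification preserves monotonicity, and $\dot\xi=v(\xi)V(w)\geq 0$ gives $\xi(0)\leq x$, hence $v(\xi(0))\leq v(x)$. Moreover, $\gamma$ monotonically decreasing together with $q\geq 0$ forces $\partial_{x}w\leqq 0$; combined with $V'\leqq 0$ this makes the integrand $-vV'(w)\partial_{x}w\leqq 0$, so the exponential factor is $\leq 1$. One then reads off $q(t,x)\leq q_{0}(\xi(0))\leq\|q_{0}\|_{\sL^{\infty}(\R)}$, uniformly in $t$, and the iteration of \cref{theo:existence_uniqueness_nonlocal_conservation_law} closes the semi-global existence.

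For the second case neither the sign of $v(\xi(0))/v(x)$ nor that of $\partial_{x}w$ is under control, and only the trivial estimate $v(\xi(0))/v(x)\leq\|v\|_{\sL^{\infty}(\R)}/\uv$ survives. The additional ingredient is $\sL^{1}$-conservation: testing the PDE with a smooth cut-off and using $q\geq 0$ plus flux decay yields $\|q(t,\cdot)\|_{\sL^{1}(\R)}=\|q_{0}\|_{\sL^{1}(\R)}$, so that $w(t,\cdot)=\gamma\ast q(t,\cdot)$ lies uniformly in an interval on which the assumption on $V'$ applies (enlarged by $(-\delta,\delta)$ to handle the boundary safely). An integration by parts in $\partial_{x}w$ together with $\gamma'\leqq 0$ and $\sL^{1}$-conservation then produces $\|\partial_{x}w(s,\cdot)\|_{\sL^{\infty}(\R)}\leq\gamma(0)\|q_{0}\|_{\sL^{1}(\R)}$; plugged into the representation and estimated without sign information, this gives the second sub-case bound. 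For the first sub-case the strict damping $-V'(w)\geq-\esssup_{X+(-\delta,\delta)}V'>0$ is used to turn the ODE for $Q$ into a contractive inequality as soon as $\|q(s,\cdot)\|_{\sL^{\infty}(\R)}$ exceeds the first entry of the $\max$; solving it produces the closed-form, $t$-independent second entry. The main obstacle is precisely this last step: closing a self-consistent bound from an ODE inequality whose damping is effective only when $\|q\|_{\sL^{\infty}}$ exceeds a competing level, requiring a dichotomy argument between the two entries of the $\max$ and careful bookkeeping to avoid a spurious factor $\mathrm{e}^{CT}$.
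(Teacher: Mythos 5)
Your overall architecture (bounds for the mollified classical solutions from \cref{lem:classical_solutions}, transfer to the weak solution via \cref{theo:weak_stability}, then iteration of the local existence) and your Lagrangian identity $\tfrac{d}{ds}\big(v(\xi)q(s,\xi)\big)=-v(\xi)V'(w)\,\partial_x w\cdot v(\xi)q(s,\xi)$ are correct and indeed mirror the paper, which runs the same estimate in Eulerian form at the spatial maximum of $q_\eps$ (case 1) resp.\ of $\rho_\eps=v_\eps q_\eps$ (case 2). The genuine gap is in your first case: you claim that $\gamma$ decreasing and $q\geq 0$ force $\partial_x w\leqq 0$ \emph{everywhere}, so that the exponential factor is $\leq 1$ along every characteristic. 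This is false. Writing out the derivative of the one-sided convolution, $\partial_x w$ consists of a boundary term proportional to $\gamma(0)\,q(t,x)$ and a tail term $\int \gamma'(\cdot)\,q$ of the opposite sign; for a general nonnegative profile (e.g.\ $x$ sitting just upstream of a block of high density inside the kernel's support) the tail term dominates and $\partial_x w>0$. The sign $\partial_x w\leq 0$ is available only at a point $x^*$ where $q_\eps(t,\cdot)$ attains its global maximum, because there $0\leq q_\eps(t,y)\leq q_\eps(t,x^*)$ lets the local term absorb the tail term --- which is precisely why the paper evaluates the PDE at the argmax (where additionally $\partial_x q_\eps=0$) instead of along arbitrary characteristics. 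Note also that the conclusion you want is about $\sup_x q_\eps(t,x)$; along an individual characteristic $q_\eps$ may well increase, so a per-characteristic sign argument cannot yield the maximum principle without locating the maximum.

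In the second case your per-characteristic Riccati/dichotomy is in substance the paper's argument for $\rho_\eps$, but the intermediate claim $\|\partial_x w(s,\cdot)\|_{\sL^\infty(\R)}\leq \gamma(0)\|q_0\|_{\sL^1(\R)}$ is not correct (and would be circular): $\partial_x w$ contains the local contribution $\gamma(0)q_\eps(t,x)$, which is controlled only by the unknown $\|q_\eps(t,\cdot)\|_{\sL^\infty(\R)}$, not by the mass. Only the tail integral $\int\gamma'(\cdot)q_\eps$ can be bounded through $\sL^1$-conservation. Consequently, in the sub-case $\esssup V'=0$ you cannot ``estimate without sign information'': you must still keep the local term paired with $V'\leqq 0$ and discard it by sign --- exactly what the paper does in \cref{eq:YAEWWNC} by retaining the nonpositive quadratic term and Gr\"onwall-ing only the linear one --- and in the sub-case $\esssup V'<0$ that same local term is what produces the damping. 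With these corrections (sign of the local term everywhere, tail bounded by mass, maximum-point evaluation in case 1) your proof would coincide with the paper's.
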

\begin{proof}
The nonnegativity of the solution immediately follows from the representation of the solution in \cref{theo:existence_uniqueness_nonlocal_conservation_law}, specifically in \cref{eq:solution_nonlocal}. Thus we only need to focus on the upper bounds in all the presented cases. To this end, approximate \(q_0,v,V\) by a smooth \(q_{0,\eps},v_{\eps},V_{\eps}\) according to \cref{theo:weak_stability}.
As the solutions for \(\eps\in\R_{>0}\) are smooth (and thus, classical solutions), we can work on the classical form and have for \((t,x)\in\OT\)
\begin{align*}
&\partial_t q_{\eps}(t,x)\\
&=-\partial_{x}\Big(v_{\eps}(x)V_{\eps}\big(\cW[q_{\eps}](t,x)\big)q_{\eps}(t,x)\Big)\\
&=-v_{\eps}'(x)V_{\eps}\big(\cW[q_{\eps}](t,x)\big)q_{\eps}(t,x)-v_{\eps}(x)V_{\eps}\big(\cW[q_{\eps}](t,x)\big)\partial_{x}q_{\eps}(t,x)\\ &\quad -v_{\eps}(x)V_{\eps}'\big(\cW[q_{\eps}](t,x)\big)\partial_{x}\cW[q_{\eps}](t,x) q_{\eps}(t,x).
\intertext{For $x \in \R$ s.t \(q_{\eps}(t,x)\) this is maximal (and thus \(\partial_{x}q_{\eps}(t,x)=0\))}
&= \Big(-v_{\eps}'(x)V_{\eps}\big(\cW[q_{\eps}](t,x)\big)-v_{\eps}(x)V_{\eps}'\big(\cW[q_{\eps}](t,x)\big)\partial_{x}\cW[q_{\eps}](t,x)\Big) q_{\eps}(t,x).
\end{align*}
Consider now the \textbf{first case}, i.e., assume $v'\geqq 0$ and by construction $v'_\epsilon \geqq 0$, as well as \(V_{\eps}\geqq 0\). Then, we obtain with the previous computation at the \(x\in\R\) where \(q_{\eps}(t,x)\) is maximal
\begin{align*}
\partial_t q_{\eps}(t,x)\leq -v_{\eps}(x)V'\big(\cW[q_{\eps}](t,x)\big)\partial_{x}\cW[q_{\eps}](t,x).
\end{align*}
However, thanks to \(V'_{\eps}\leqq 0\) and \(\partial_{x}\cW[q_{\eps}](t,x)\leqq 0\) for the \((t,x)\in\OT\) where \(q_{\eps}(t,x)\) is maximal, the last term is nonpositive. This implies that the maxima can only decrease, i.e.,
\[
q_{\eps}(t,x)\leq \|q_{0,\eps}\|_{\sL^{\infty}(\R)}\leq \|q_{0}\|_{\sL^{\infty}(\R)}\quad \forall (t,x)\in\OT.
\]
According to \cref{theo:weak_stability}, we have \[
\forall g\in \sC_{\text{c}}(\R) : \lim_{\eps\rightarrow 0}\max_{t\in[0,T]}\Big\vert \int_{\R}\big( q(t,x)-q_{\eps}(t,x)\big) g(x)\dd x\Big\vert=0
\] 
so that this upper bound carries over from \(q_{\eps}\) to \(q\), completing the first case. 
For the \textbf{second case} by some ``scaling with \(v_\eps\)'' and defining the new variable \(\rho_\eps \equivd v_\eps q_\eps\) we obtain 
\begin{align*}
    \partial_t \rho_\eps(t,x) &=  -v_\eps(x)\partial_{x}\Big(V_\eps\big(\cW[q_\eps](t,x)\big)\rho_\eps(t,x)\Big) \\
    &= -v_\eps(x)V_\eps'\big(\cW[q_\eps](t,x)\big)\partial_{x}\cW[q_\eps](t,x)\rho_\eps(t,x)\\
    &\quad -v_\eps(x)V_\eps\big(\cW[q_\eps](t,x)\big)\partial_{x}\rho_{\eps}(t,x).
    \end{align*}
Taking any $x\in \R$ s.t.\ $\rho_\eps(t,x) = \|\rho_\eps(t,\cdot)\|_{\sL^\infty(\R)}$, we thus have $\partial_x \rho_\eps(t,x) = 0$ and by the previous computations
 \begin{align*}
 \partial_t \rho_\eps(t,x)= -v_\eps(x)V'_\eps\big(\cW[q_\eps](t,x)\big)\partial_{x}\cW[q_\eps](t,x)\rho_\eps(t,x).
    \end{align*}
As we have by assumption $\gamma \in \sW^{1,\infty}(\R_{\geq 0}\sk \R_{\geq 0})$ with  $\gamma' \leqq 0$, this yields for \(x\in\R\) where \(\rho_{\eps}(t,x)=\|\rho_{\eps}(t,\cdot)\|_{\sL^{\infty}(\R)}\),
\begin{align}
    \partial_t \rho_\eps(t,x)&= v_\eps(x)V_\eps'\big(\cW[q_\eps](t,x)\big)\Big(\gamma(0)q_\eps(t,x) + \!\!\int_{\R_{> 0}} \!\!\!\!\gamma'(y-x) q_\eps(t,y) \dd y\Big) \rho_\eps(t,x) \notag \\
    &\leq \essinf_{s\in \big(-\eps,\|q_0\|_{\sL^1(\R)}\|\gamma\|_{\sL^1(\R)}+\eps\big) } V'_\eps\big(s\big)\gamma(0)  \rho_\eps^2(t,x) \label{eq:YAEWWNC}\\
    &\quad + \|v\|_{\sL^\infty(\R)}\|V'\|_{\sL^\infty\left(\left(-\eps,\|q_0\|_{\sL^1(\R)}\|\gamma\|_{\sL^1(\R)}+\eps\right)\right)}
\gamma(0) \|q_0\|_{\sL^1(\R)} \rho_\eps(t,x). \notag
\end{align}
The latter estimate holds due to \(\vert \gamma \vert_{\sTV(\R_{>0})} = \gamma(0)\), as \(\gamma'\leqq 0\) and \(\gamma \in \sBV(\R_{>0})\). We have also used Young's convolution inequality \cite[Theorem 4.15 (Young)]{brezis} several times to estimate $\mathcal W$ and $\partial_x \mathcal W$. In detail, it holds that
\begin{align*}
    0\leq\int_{\R_{>0}} \!\!\!\!\gamma(x-y) q_\eps(t,y) \dd y &\leq \|\gamma\|_{\sL^1(\R)}\|q_\eps\|_{\sL^1(\R)} = \|\gamma\|_{\sL^1(\R)}\|q_{0,\eps}\|_{\sL^1(\R)} \leq \|\gamma\|_{\sL^1(\R)}\|q_{0}\|_{\sL^1(\R)}\\
    \bigg\vert\!\int_{\R_{>0}} \!\!\!\!\gamma'(x-y) q_\eps(t,y) \dd y\bigg\vert &\leq \vert\gamma\vert_{\sTV(\R_{>0})}\|q_\eps\|_{\sL^1(\R)} \leq \gamma(0)\|q_{0}\|_{\sL^1(\R)},
\end{align*}
by construction of $q_{0,\eps}$, the conservation of mass of \(q\), and the fact that $\gamma$ is monotonically decreasing on $\R_{>0}$.
Altogether, we obtain for \((t,x)\in\OT\) a.e.\
\begin{align*}
 \rho_\eps(t,x) \leq \max\bigg\{ \|v\cdot q_0\|_{\sL^\infty(\R)} \ , \   \tfrac{\|v\|_{\sL^\infty(\R)}\|V'\|_{\sL^\infty\left(X(q_0,\gamma)+(-\eps,\eps)\right)}
 \|q_0\|_{\sL^1(\R)}}{-\essinf_{s\in \left(X(q_0,\gamma)+(-\eps,\eps)\right) } V_{\eps}'(s)  }\bigg\}.
\end{align*}
With the identical argument as before, once more using \cref{theo:weak_stability}, we obtain the stated bounds for \(\eps\rightarrow 0\) when later recalling that \(\rho\equiv v\cdot q\) and \(v\geqq \uv\).

The \textbf{third case} follows immediately when reconnecting to \cref{eq:YAEWWNC} and noticing that

\begin{align*}
    \partial_t \rho_\eps(t,x) &\leq \|v\|_{\sL^\infty(\R)}\|V'\|_{\sL^\infty\left(\left(-\eps,\|q_0\|_{\sL^1(\R)}\|\gamma\|_{\sL^1(\R)}+\eps\right)\right)}
\gamma(0) \|q_0\|_{\sL^1(\R)} \rho_\eps(t,x), 
\end{align*}
which leads to at most exponential growth of \(\|\rho_\eps\|_{\sL^\infty((\R))}\). 
\end{proof}
\begin{cor}[Compatible initial datum] In contradiction to the concluding remarks in \cite{chiarello2021existence}, we can state that for any given discontinuity \(v\), 
\(V'\in\sL^{\infty}(\R)\) with \(\esssup_{x\in\R}V'(x)<0\) and desired upper bound \(C\in \R_{>0}\), there exists \(q_{0} \not\equiv 0\) such that the corresponding solution $q$ to the discontinuous nonlocal conservation law as defined in \cref{defi:disc_conservation_law} satisfies \[q \leqq C \ \text{ on } \OT\] for each \(T\in\R_{>0}\).
This is still possible for the case \(\esssup_{x\in\R}V'(x)=0\). However, the time horizon must be fixed. 
\end{cor}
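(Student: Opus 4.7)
The plan is to invoke \cref{theo:maximum_principle}, specifically the two subcases of its second item, with a one-parameter family of small nonnegative initial data of compact support, and to show that the explicit right-hand sides in the maximum principle can be driven below $C$ by shrinking that parameter.

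Concretely, I would fix any compact $K\subset\R$ of positive Lebesgue measure and, for $\epsilon\in\R_{>0}$, set $q_{0}^{\epsilon}\:\epsilon\,\chi_{K}$. Then $q_{0}^{\epsilon}\in \sL^{1}(\R;\R_{\geq0})\cap \sL^{\infty}(\R;\R_{\geq0})$, $q_{0}^{\epsilon}\not\equiv 0$, and $\|q_{0}^{\epsilon}\|_{\sL^{\infty}(\R)}=\epsilon$, $\|q_{0}^{\epsilon}\|_{\sL^{1}(\R)}=\epsilon\vert K\vert$, so both norms vanish linearly as $\epsilon\to 0$. For the case $\esssup_{x\in\R}V'(x)<0$ I appeal to the first subcase of the second item of \cref{theo:maximum_principle}. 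Using
\[
-\esssup_{s\in X(q_{0}^{\epsilon},\gamma)+(-\delta,\delta)}V'(s)\;\geq\;-\esssup_{s\in\R}V'(s)\;>\;0
\]
together with $\|V'\|_{\sL^{\infty}(X(q_{0}^{\epsilon},\gamma)+(-\delta,\delta))}\leq\|V'\|_{\sL^{\infty}(\R)}$, both arguments of the $\max\{\cdot,\cdot\}$ in that subcase scale linearly in $\epsilon$, so the upper bound on $q$ is independent of $T$ and can be forced $\leq C$ by choosing $\epsilon$ small enough. One such $q_{0}^{\epsilon}$ then works for every $T\in\R_{>0}$ simultaneously, giving the first assertion.

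For the case $\esssup_{x\in\R}V'(x)=0$ with a prescribed $T\in\R_{>0}$, I use the second subcase, which for the same family yields
\[
q(t,x)\;\leq\;\tfrac{\|v\|_{\sL^{\infty}(\R)}\epsilon}{\uv}\exp\bigl(T\,\|v\|_{\sL^{\infty}(\R)}\,\|V'\|_{\sL^{\infty}(\R)}\,\gamma(0)\,\epsilon\,\vert K\vert\bigr).
\]
For any fixed $T$, the exponential factor tends to $1$ as $\epsilon\to 0$ while the prefactor is of order $\epsilon$, so the whole expression can be forced below $C$. The blow-up of the exponent as $T\to\infty$ for fixed $\epsilon$ is precisely the reason why $T$ must be prescribed in advance in this degenerate regime.

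The only (mild) obstacle I foresee is verifying that the extra hypotheses on $\gamma$ in the second item of \cref{theo:maximum_principle} -- namely $\gamma\in\sW^{1,\infty}(\R_{>0};\R_{\geq0})\cap \sL^{1}(\R_{>0};\R_{\geq0})$ with $\gamma'\leqq 0$, plus $\supp(\gamma)\subset\R_{\geq0}$ -- are in force for the corollary. These are inherited from \cref{ass:input_datum_additional}, which is the standing hypothesis of the maximum principle, so there is no new obstruction and the entire argument reduces to the $\epsilon$-calibration above.
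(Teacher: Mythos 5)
Your argument is correct and is essentially the paper's own: the corollary is stated without a separate proof precisely because it follows from the second item of \cref{theo:maximum_principle} by the scaling \(q_{0}=\epsilon\,\chi_{K}\) you describe — the first bound is \(T\)-independent and linear in \(\epsilon\) (using \(-\esssup_{\R}V'>0\) to control the denominator), while the second is linear in \(\epsilon\) with an exponential factor that is only controlled on a fixed time horizon, which is exactly why \(T\) must be prescribed in the degenerate case. One small imprecision: the hypotheses \(\gamma\in\sW^{1,\infty}(\R_{>0};\R_{\geq0})\) and \(\gamma'\leqq 0\) are not contained in \cref{ass:input_datum_additional} but are the extra assumptions of that second item of \cref{theo:maximum_principle}, so they should be read as standing hypotheses of the corollary rather than as inherited from \cref{ass:input_datum_additional}.
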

We illustrate the discontinuous nonlocal conservation law by means of the following
\begin{example}[Some numerical illustrations]
In order to visualize the effect of a discontinuity in space (demonstrated via \(v\)), we consider the following modelling archetypes:
\[
q_{0}\:\tfrac{1}{2}\chi_{[-0.5,-0.1]}(x), \ V(\cdot)\equiv 1-\cdot,\ \gamma\equiv 10\chi_{[0,0.1]},\ v\in\big\{1,\ 1+\chi_{\R_{>0}},\ 1-\tfrac{1}{2}\chi_{\R_{>0}}\big\},
\]
which are illustrated in \cref{fig:nonlocal_discontinuous_3d}. As can be seen, for \(v\equiv 1\) and \(v\equiv 1+\chi_{\R_{>0}}\), i.e.\  the monotonically increasing cases, the first statement of \cref{fig:nonlocal_discontinuous_3d} applies, and indeed the proposed maximum principle holds. Moreover, a jump downwards is evident at position \(x=0\) in density \(q\), when the velocity jumps from \(1\) to \(2\) in the second case. This is in line with intuition that an increased speed reduces the density accordingly, in equations roughly
\begin{align*}
\forall t\in[0,T]:\ \lim_{x\nearrow 0} q(t,x)v(x)V(\cW[q](t,x))&= \lim_{x\searrow 0} q(t,x)v(x)V(\cW[q](t,x))\\
\overset{\text{assume }V\neq 0}{\Longleftrightarrow} \ \qquad 
\lim_{x\nearrow 0} q(t,x)
&= 2\lim_{x\searrow 0} q(t,x).
\end{align*}
In the third case, the velocity is halved at \(x=0\) and the density is doubled. This is -- for specific times \(t\in\{0,0.5,1\}\) -- also illustrated in the bottom row of \cref{fig:nonlocal_discontinuous_3d}.
\begin{figure}
    \centering
    {\includegraphics[scale=0.75,clip,trim=2 0 12 0]{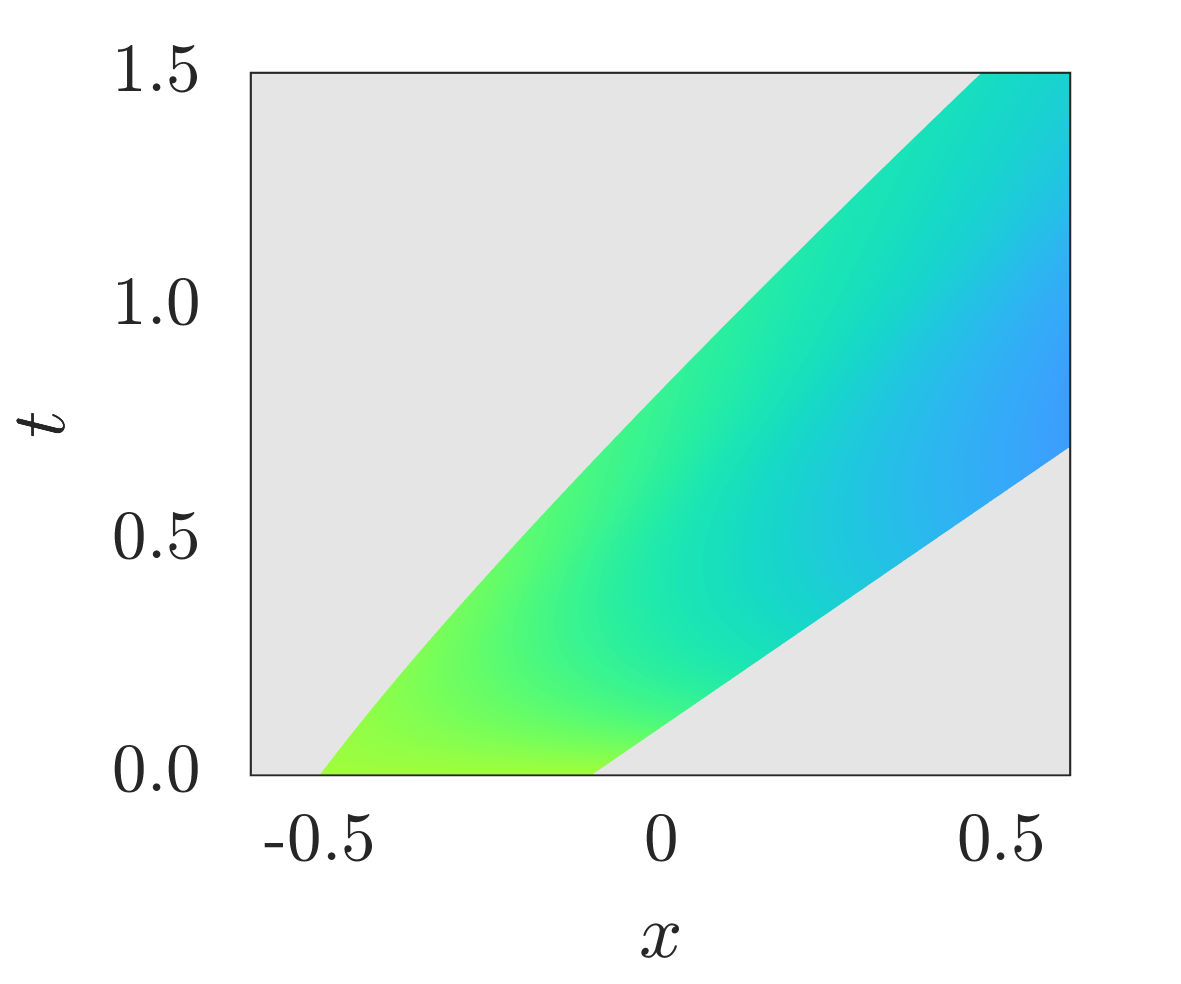}}
     {\includegraphics[scale=0.75,clip,trim=35 0 12 0]{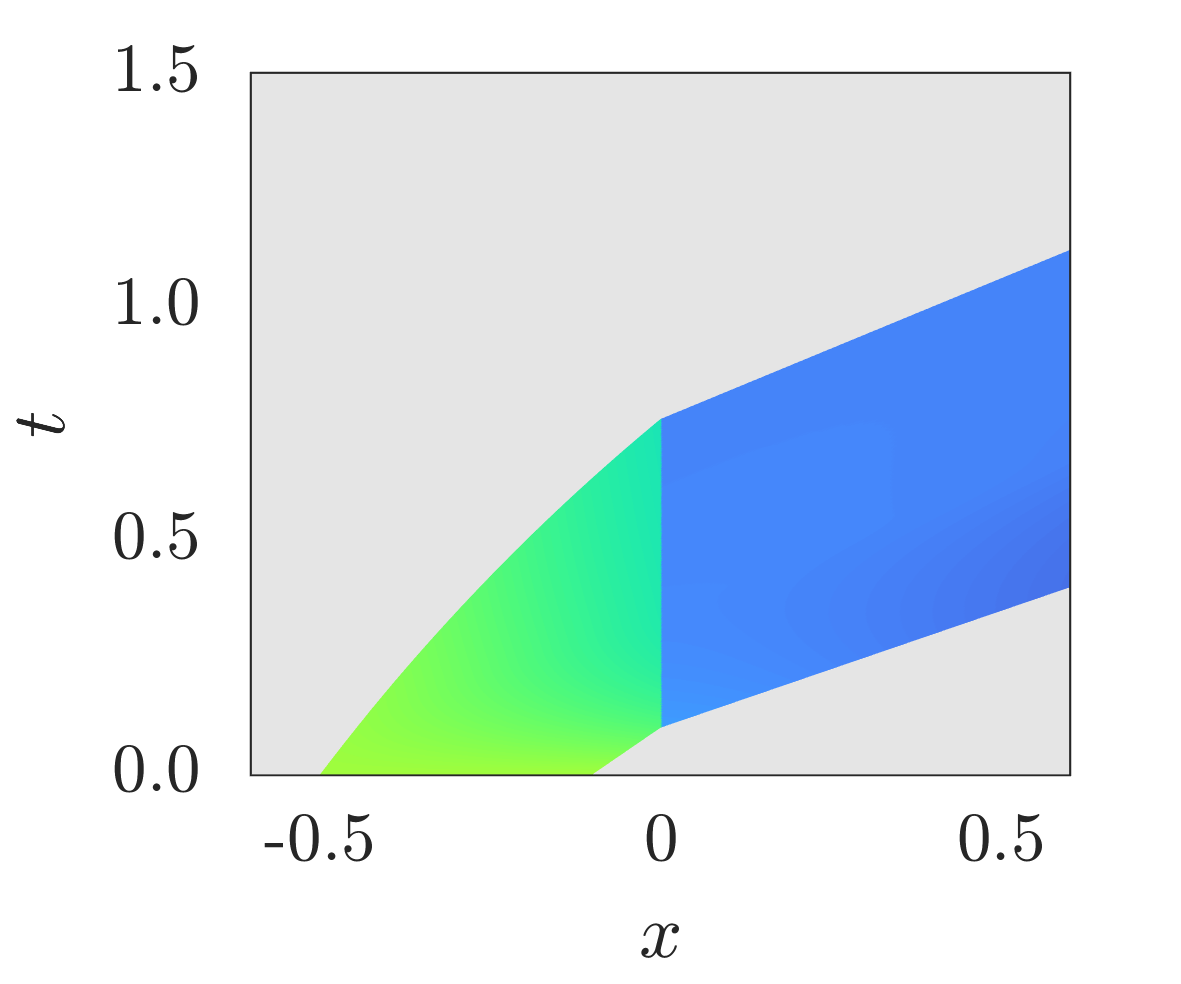}}
     {\includegraphics[scale=0.75,clip,trim=35 0 12 0]{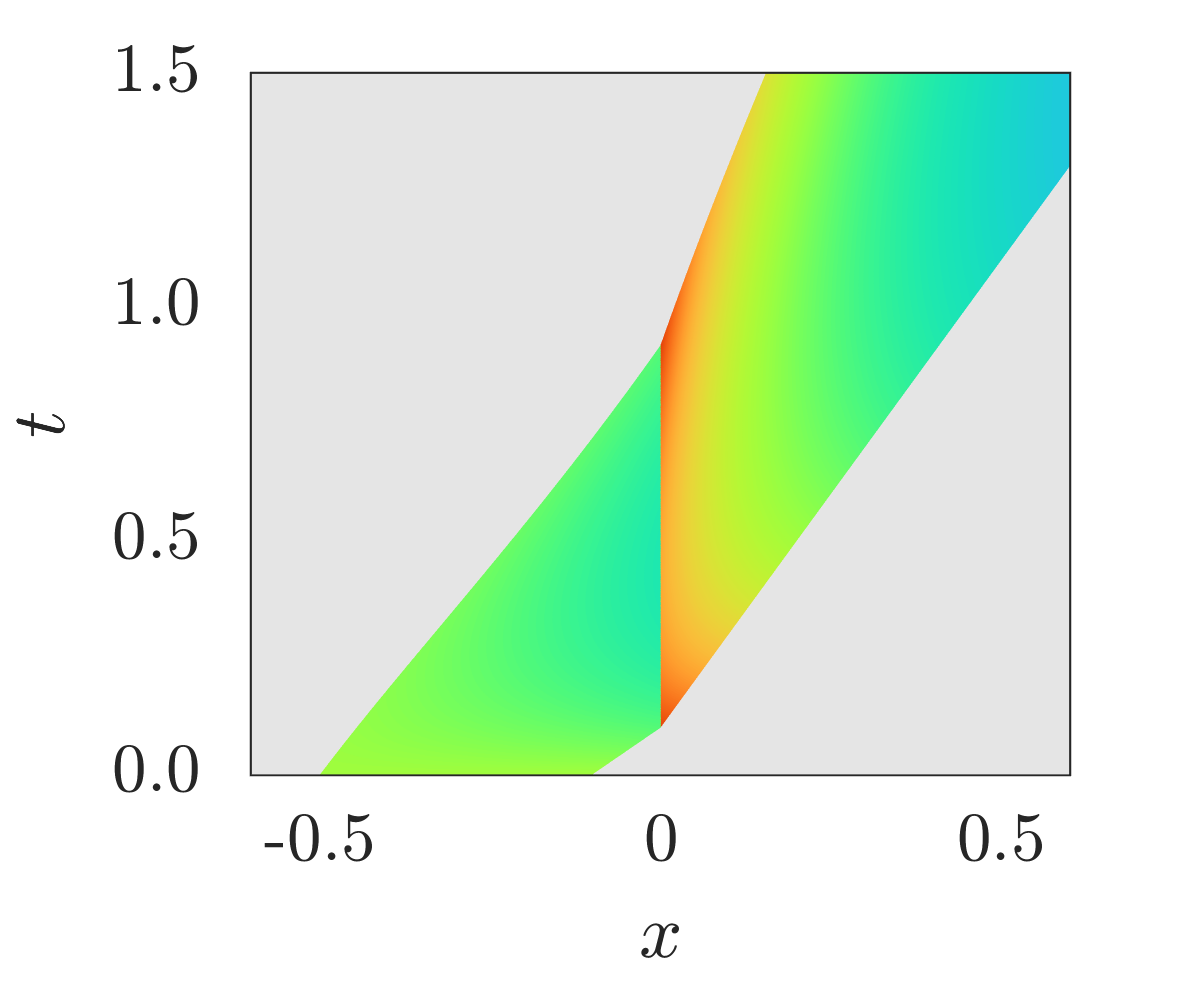}}
     
     \includegraphics[scale=0.75,clip,trim=2 0 12 0]{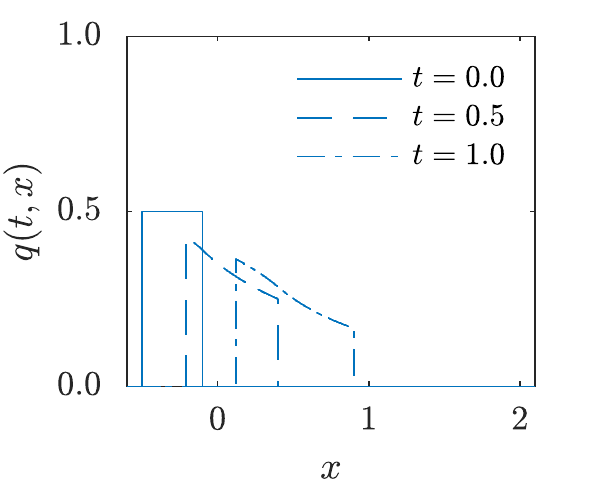}
     \includegraphics[scale=0.75,clip,trim=35 0 12 0]{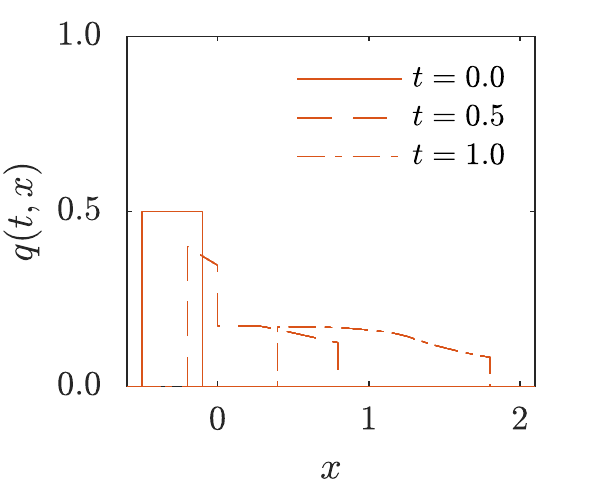}
     \includegraphics[scale=0.75,clip,trim=35 0 12 0]{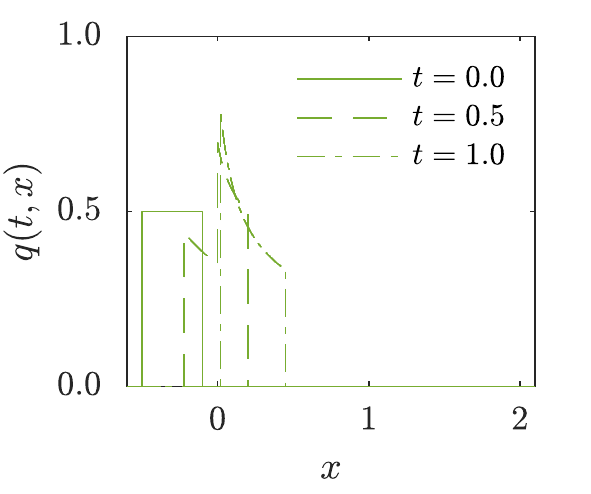}
     
    \caption{Evolution of the solution $q$ in space-time with discontinuities $v \equiv 1$ (\textbf{top left}), $v \equiv 1 + \chi_{\R_{>0}}$ (\textbf{top middle}) and $v \equiv 1 - \tfrac{1}{2} \chi_{\R_{>0}}$ (\textbf{top right}). The solutions at time $t=0$ (\textbf{solid}), $t = 0.5$ (\textbf{dashed}) and $t=1$ (\textbf{dash-dotted}) are shown in the \textbf{lower row}. }
    \label{fig:nonlocal_discontinuous_3d}
\end{figure}

\begin{figure}
    \centering
    \includegraphics[scale=0.75]{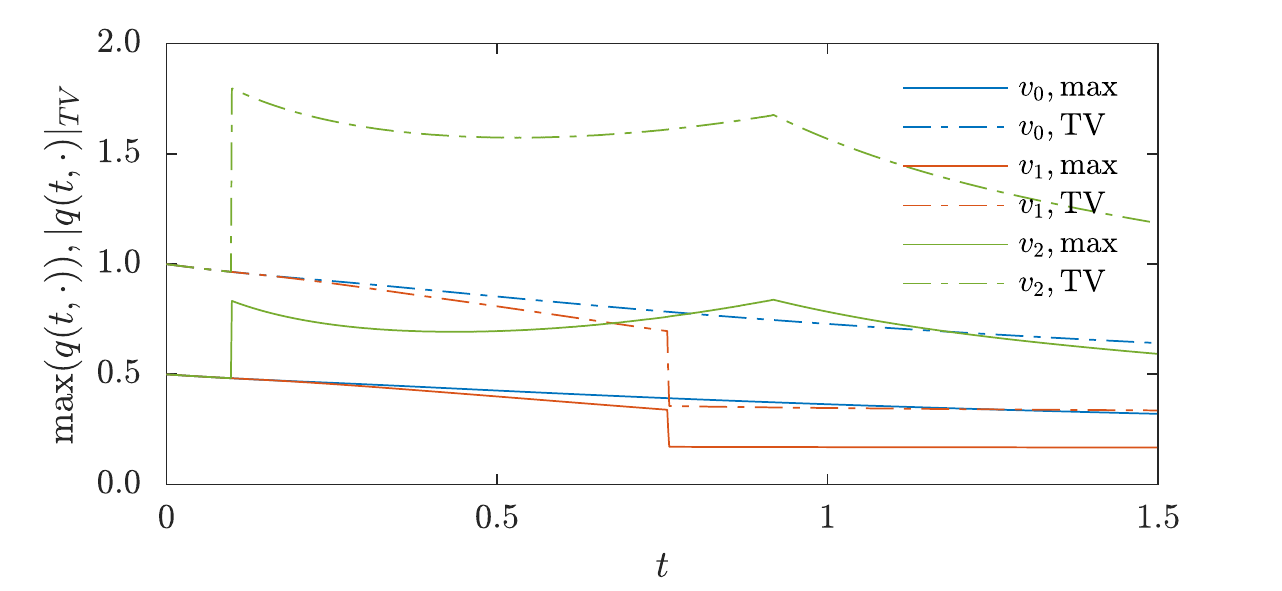}
    \caption{The evolution of the maximum (\textbf{solid}) of the solution as well as its total variation (\textbf{dash-dotted}) are visualized. \textcolor{blue}{Blue} represents the case with \(v\equiv v_{0}\equiv 1\), \textcolor{red!50!black!75!yellow}{orange} the case with \(v\equiv v_{1}\equiv 1+\chi_{\R_{>0}}\) and, finally, \textcolor{green!50!black}{green} the case with \(v\equiv v_{2}\equiv 1-\tfrac{1}{2}\chi_{\R_{>0}}\).}
    \label{fig:nonlocal_discontinuous_2d}
\end{figure}
In \cref{fig:nonlocal_discontinuous_2d}, the evolution of the maximum of the solution, in equations \(\|q(t,\cdot)\|_{\sL^{\infty}(\R)}\), is illustrated. This reflects our previous remarks.
The total variation for the different cases is also shown and as can be seen, it changes significantly when the discontinuity comes into play. Clearly, an upper bound will depend on the total variation of \(q_{0}\) as well as \(v\).
\end{example}
\section{Conclusions and open problems}\label{sec:conclusions}
In this contribution, we have studied nonlocal conservation laws in \(\sC\big([0,T];\sL^{1}_{\loc}(\R)\big)\cap\sL^{\infty}\big((0,T);\sL^{\infty}(\R)\big)\) with general multiplicative discontinuities (\(\sL^{\infty}\)-type) in space. By employing the method of characteristics and a reformulation as a fixed-point problem, we could instead consider specific discontinuous ODEs, which we studied for existence, uniqueness and stability. The results obtained were then applied to the discontinuous nonlocal conservation law to prove existence and uniqueness of weak solutions on a small time horizon. These results were supplemented by several ``maximum principles'' guaranteeing the semi-global existence of solutions. We have thus generalized the existing theory on (purely) nonlocal conservation laws to include discontinuities in space, and have proven that Entropy conditions are -- once more -- obsolete (compare with \cite{pflug}) although still used in literature \cite{chiarello2021existence,chiarello2021nonlocal}. 
The established theory sets the stage for several future directions:
\textbf{1)} similar to \cite{coclite2020general}, consideration of the convergence to the local discontinuous conservation law when we let the convolution kernel in the nonlocal part of the velocity converge to a Dirac distribution,
\textbf{2)} the bounded domain case similar to \cite{keimer1},  
\textbf{3)} measure-valued solutions similar to \cite{crippa2013existence}, assuming that the kernel is in \(\sW^{1,\infty}(\R)\),
\textbf{4)} discontinuous (in space) multi-dimensional nonlocal conservation laws.

\section*{Acknowledgement}
Funded by the Deutsche Forschungsgemeinschaft (DFG, German Research Foundation) – Project-ID 416229255 – SFB 1411.

We would like to thank Deborah Bennett for proofreading this manuscript carefully.
\bibstyle{sn-mathphys}
\bibliography{biblio.bib}


\begin{thebibliography}{106}
\ifx \bisbn   \undefined \def \bisbn  #1{ISBN #1}\fi
\ifx \binits  \undefined \def \binits#1{#1}\fi
\ifx \bauthor  \undefined \def \bauthor#1{#1}\fi
\ifx \batitle  \undefined \def \batitle#1{#1}\fi
\ifx \bjtitle  \undefined \def \bjtitle#1{#1}\fi
\ifx \bvolume  \undefined \def \bvolume#1{\textbf{#1}}\fi
\ifx \byear  \undefined \def \byear#1{#1}\fi
\ifx \bissue  \undefined \def \bissue#1{#1}\fi
\ifx \bfpage  \undefined \def \bfpage#1{#1}\fi
\ifx \blpage  \undefined \def \blpage #1{#1}\fi
\ifx \burl  \undefined \def \burl#1{\textsf{#1}}\fi
\ifx \doiurl  \undefined \def \doiurl#1{\url{https://doi.org/#1}}\fi
\ifx \betal  \undefined \def \betal{\textit{et al.}}\fi
\ifx \binstitute  \undefined \def \binstitute#1{#1}\fi
\ifx \binstitutionaled  \undefined \def \binstitutionaled#1{#1}\fi
\ifx \bctitle  \undefined \def \bctitle#1{#1}\fi
\ifx \beditor  \undefined \def \beditor#1{#1}\fi
\ifx \bpublisher  \undefined \def \bpublisher#1{#1}\fi
\ifx \bbtitle  \undefined \def \bbtitle#1{#1}\fi
\ifx \bedition  \undefined \def \bedition#1{#1}\fi
\ifx \bseriesno  \undefined \def \bseriesno#1{#1}\fi
\ifx \blocation  \undefined \def \blocation#1{#1}\fi
\ifx \bsertitle  \undefined \def \bsertitle#1{#1}\fi
\ifx \bsnm \undefined \def \bsnm#1{#1}\fi
\ifx \bsuffix \undefined \def \bsuffix#1{#1}\fi
\ifx \bparticle \undefined \def \bparticle#1{#1}\fi
\ifx \barticle \undefined \def \barticle#1{#1}\fi
\bibcommenthead
\ifx \bconfdate \undefined \def \bconfdate #1{#1}\fi
\ifx \botherref \undefined \def \botherref #1{#1}\fi
\ifx \url \undefined \def \url#1{\textsf{#1}}\fi
\ifx \bchapter \undefined \def \bchapter#1{#1}\fi
\ifx \bbook \undefined \def \bbook#1{#1}\fi
\ifx \bcomment \undefined \def \bcomment#1{#1}\fi
\ifx \oauthor \undefined \def \oauthor#1{#1}\fi
\ifx \citeauthoryear \undefined \def \citeauthoryear#1{#1}\fi
\ifx \endbibitem  \undefined \def \endbibitem {}\fi
\ifx \bconflocation  \undefined \def \bconflocation#1{#1}\fi
\ifx \arxivurl  \undefined \def \arxivurl#1{\textsf{#1}}\fi
\csname PreBibitemsHook\endcsname

\bibitem{keimer1}
\begin{barticle}
\bauthor{\bsnm{Keimer}, \binits{A.}},
\bauthor{\bsnm{Pflug}, \binits{L.}},
\bauthor{\bsnm{Spinola}, \binits{M.}}:
\batitle{Nonlocal scalar conservation laws on bounded domains and applications
  in traffic flow}.
\bjtitle{SIAM SIMA}
\bvolume{50}(\bissue{6}),
\bfpage{6271}--\blpage{6306}
(\byear{2018})
\end{barticle}
\endbibitem

\bibitem{coron}
\begin{barticle}
\bauthor{\bsnm{Coron}, \binits{J.-M.}},
\bauthor{\bsnm{Wang}, \binits{Z.}}:
\batitle{Controllability for a scalar conservation law with nonlocal velocity}.
\bjtitle{Journal of Differential Equations}
\bvolume{252}(\bissue{1}),
\bfpage{181}--\blpage{201}
(\byear{2012})
\end{barticle}
\endbibitem

\bibitem{ackleh}
\begin{barticle}
\bauthor{\bsnm{Ackleh}, \binits{A.S.}},
\bauthor{\bsnm{Deng}, \binits{K.}}:
\batitle{{Monotone Method for First Order Nonlocal Hyperbolic Initial-boundary
  Value Problems}}.
\bjtitle{Applicable Analysis}
\bvolume{67}(\bissue{3}),
\bfpage{283}--\blpage{293}
(\byear{1997}).
\doiurl{10.1080/00036819708840612}
\end{barticle}
\endbibitem

\bibitem{groeschel}
\begin{barticle}
\bauthor{\bsnm{Gr\"oschel}, \binits{M.}},
\bauthor{\bsnm{Keimer}, \binits{A.}},
\bauthor{\bsnm{Leugering}, \binits{G.}},
\bauthor{\bsnm{Wang}, \binits{Z.}}:
\batitle{{Regularity Theory and Adjoint Based Optimality Conditions for a
  Nonlinear Transport Equation with Nonlocal Velocity}}.
\bjtitle{SIAM Journal on Control and Optimization}
\bvolume{52}(\bissue{4}),
\bfpage{2141}--\blpage{2163}
(\byear{2014})
\end{barticle}
\endbibitem

\bibitem{keimer2}
\begin{barticle}
\bauthor{\bsnm{Gugat}, \binits{M.}},
\bauthor{\bsnm{Keimer}, \binits{A.}},
\bauthor{\bsnm{Leugering}, \binits{G.}},
\bauthor{\bsnm{Wang}, \binits{Z.}}:
\batitle{Analysis of a system of nonlocal conservation laws for multi-commodity
  flow on networks}.
\bjtitle{{Networks \& Het. Media}}
\bvolume{10}(\bissue{4}),
\bfpage{749}--\blpage{785}
(\byear{2015}).
\doiurl{10.3934/nhm.2015.10.749}
\end{barticle}
\endbibitem

\bibitem{amorim}
\begin{barticle}
\bauthor{\bsnm{Amorim}, \binits{P.}}:
\batitle{On a nonlocal hyperbolic conservation law arising from a gradient
  constraint problem}.
\bjtitle{Bulletin of the Brazilian Mathematical Society, New Series}
\bvolume{43}(\bissue{4}),
\bfpage{599}--\blpage{614}
(\byear{2012})
\end{barticle}
\endbibitem

\bibitem{colombo_nonlocal}
\begin{barticle}
\bauthor{\bsnm{Colombo}, \binits{R.M.}},
\bauthor{\bsnm{L{\'e}cureux-Mercier}, \binits{M.}}:
\batitle{Nonlocal crowd dynamics models for several populations}.
\bjtitle{Acta Mathematica Scientia}
\bvolume{32}(\bissue{1}),
\bfpage{177}--\blpage{196}
(\byear{2012})
\end{barticle}
\endbibitem

\bibitem{betancourt}
\begin{barticle}
\bauthor{\bsnm{Betancourt}, \binits{F.}},
\bauthor{\bsnm{B{\"u}rger}, \binits{R.}},
\bauthor{\bsnm{Karlsen}, \binits{K.H.}},
\bauthor{\bsnm{Tory}, \binits{E.M.}}:
\batitle{On nonlocal conservation laws modelling sedimentation}.
\bjtitle{Nonlinearity}
\bvolume{24}(\bissue{3}),
\bfpage{855}
(\byear{2011})
\end{barticle}
\endbibitem

\bibitem{zumbrun}
\begin{botherref}
\oauthor{\bsnm{Zumbrun}, \binits{K.}}:
On a nonlocal dispersive equation modeling particle suspensions.
Quarterly of Applied Mathematics,
573--600
(1999)
\end{botherref}
\endbibitem

\bibitem{pflug}
\begin{barticle}
\bauthor{\bsnm{Keimer}, \binits{A.}},
\bauthor{\bsnm{Pflug}, \binits{L.}}:
\batitle{Existence, uniqueness and regularity results on nonlocal balance
  laws}.
\bjtitle{Journal of Differential Equations}
\bvolume{263},
\bfpage{4023}--\blpage{4069}
(\byear{2017})
\end{barticle}
\endbibitem

\bibitem{aggarwal}
\begin{barticle}
\bauthor{\bsnm{Aggarwal}, \binits{A.}},
\bauthor{\bsnm{Colombo}, \binits{R.M.}},
\bauthor{\bsnm{Goatin}, \binits{P.}}:
\batitle{Nonlocal systems of conservation laws in several space dimensions}.
\bjtitle{SIAM Journal on Numerical Analysis}
\bvolume{53}(\bissue{2}),
\bfpage{963}--\blpage{983}
(\byear{2015})
\end{barticle}
\endbibitem

\bibitem{pflug2}
\begin{botherref}
\oauthor{\bsnm{Keimer}, \binits{A.}},
\oauthor{\bsnm{Pflug}, \binits{L.}},
\oauthor{\bsnm{Spinola}, \binits{M.}}:
Nonlocal balance laws: Theory of convergence for nondissipative numerical
  schemes.
submitted
(2018)
\end{botherref}
\endbibitem

\bibitem{spinola}
\begin{barticle}
\bauthor{\bsnm{Keimer}, \binits{A.}},
\bauthor{\bsnm{Pflug}, \binits{L.}},
\bauthor{\bsnm{Spinola}, \binits{M.}}:
\batitle{Existence, uniqueness and regularity of multi-dimensional nonlocal
  balance laws with damping}.
\bjtitle{Journal of Mathematical Analysis and Applications}
\bvolume{466}(\bissue{1}),
\bfpage{18}--\blpage{55}
(\byear{2018}).
\doiurl{10.1016/j.jmaa.2018.05.013}
\end{barticle}
\endbibitem

\bibitem{spinolo}
\begin{barticle}
\bauthor{\bsnm{Colombo}, \binits{M.}},
\bauthor{\bsnm{Crippa}, \binits{G.}},
\bauthor{\bsnm{Spinolo}, \binits{L.V.}}:
\batitle{On the singular local limit for conservation laws with nonlocal
  fluxes}.
\bjtitle{Archive for Rational Mechanics and Analysis volume}
\bvolume{233},
\bfpage{1131}--\blpage{1167}
(\byear{2019})
\end{barticle}
\endbibitem

\bibitem{ngoduy}
\begin{barticle}
\bauthor{\bsnm{Ngoduy}, \binits{D.}},
\bauthor{\bsnm{Wilson}, \binits{R.E.}}:
\batitle{Multianticipative nonlocal macroscopic traffic model}.
\bjtitle{Computer-Aided Civil and Infrastructure Engineering}
\bvolume{29}(\bissue{4}),
\bfpage{248}--\blpage{263}
(\byear{2014})
\end{barticle}
\endbibitem

\bibitem{teixeira}
\begin{barticle}
\bauthor{\bsnm{Amorim}, \binits{P.}},
\bauthor{\bsnm{Colombo}, \binits{R.M.}},
\bauthor{\bsnm{Teixeira}, \binits{A.}}:
\batitle{On the numerical integration of scalar nonlocal conservation laws}.
\bjtitle{ESAIM: Math. Modelling and Numerical Analysis}
\bvolume{49}(\bissue{1}),
\bfpage{19}--\blpage{37}
(\byear{2015})
\end{barticle}
\endbibitem

\bibitem{colombo2012}
\begin{barticle}
\bauthor{\bsnm{Colombo}, \binits{R.M.}},
\bauthor{\bsnm{Garavello}, \binits{M.}},
\bauthor{\bsnm{L{\'e}cureux-Mercier}, \binits{M.}}:
\batitle{A class of nonlocal models for pedestrian traffic}.
\bjtitle{Mathematical Models and Methods in Applied Sciences}
\bvolume{22}(\bissue{04}),
\bfpage{1150023}
(\byear{2012})
\end{barticle}
\endbibitem

\bibitem{pflug3}
\begin{barticle}
\bauthor{\bsnm{Keimer}, \binits{A.}},
\bauthor{\bsnm{Pflug}, \binits{L.}},
\bauthor{\bsnm{Spinola}, \binits{M.}}:
\batitle{Nonlocal scalar conservation laws on bounded domains and applications
  in traffic flow}.
\bjtitle{SIAM SIMA}
\bvolume{50}(\bissue{6}),
\bfpage{6271}--\blpage{6306}
(\byear{2018})
\end{barticle}
\endbibitem

\bibitem{pflug4}
\begin{barticle}
\bauthor{\bsnm{Keimer}, \binits{A.}},
\bauthor{\bsnm{Pflug}, \binits{L.}}:
\batitle{On approximation of local conservation laws by nonlocal conservation
  laws}.
\bjtitle{Journal of Mathematical Analysis and Applications}
\bvolume{475}(\bissue{2}),
\bfpage{1927}--\blpage{1955}
(\byear{2019})
\end{barticle}
\endbibitem

\bibitem{coron_pflug}
\begin{barticle}
\bauthor{\bsnm{Coron}, \binits{J.-M.}},
\bauthor{\bsnm{Keimer}, \binits{A.}},
\bauthor{\bsnm{Pflug}, \binits{L.}}:
\batitle{Nonlocal transport equations -- existence and uniqueness of solutions
  and relation to the corresponding conservation laws}.
\bjtitle{SIAM SIMA}
\bvolume{52}(\bissue{6}),
\bfpage{5500}--\blpage{5532}
(\byear{2020})
\end{barticle}
\endbibitem

\bibitem{spinolo_2}
\begin{botherref}
\oauthor{\bsnm{Colombo}, \binits{M.}},
\oauthor{\bsnm{Crippa}, \binits{G.}},
\oauthor{\bsnm{Spinolo}, \binits{L.V.}}:
Blow-up of the total variation in the local limit of a nonlocal traffic model.
arXiv preprint arXiv:1808.03529
(2018)
\end{botherref}
\endbibitem

\bibitem{sarkar}
\begin{barticle}
\bauthor{\bsnm{Keimer}, \binits{A.}},
\bauthor{\bsnm{Leugering}, \binits{G.}},
\bauthor{\bsnm{Sarkar}, \binits{T.}}:
\batitle{Analysis of a system of nonlocal balance laws with weighted work in
  progress}.
\bjtitle{Journal of Hyperbolic Diff. Equations}
\bvolume{15}(\bissue{03}),
\bfpage{375}--\blpage{406}
(\byear{2018})
\end{barticle}
\endbibitem

\bibitem{keimer3}
\begin{barticle}
\bauthor{\bsnm{Keimer}, \binits{A.}},
\bauthor{\bsnm{Singh}, \binits{M.}},
\bauthor{\bsnm{Veeravalli}, \binits{T.}}:
\batitle{Existence and uniqueness results for a class of nonlocal conservation
  laws by means of a lax–hopf-type solution formula}.
\bjtitle{Journal of Hyperbolic Differential Equations}
\bvolume{17}(\bissue{04}),
\bfpage{677}--\blpage{705}
(\byear{2020})
\end{barticle}
\endbibitem

\bibitem{li}
\begin{barticle}
\bauthor{\bsnm{Li}, \binits{D.}},
\bauthor{\bsnm{Li}, \binits{T.}}:
\batitle{Shock formation in a traffic flow model with arrhenius look-ahead
  dynamics}.
\bjtitle{Networks \& Heterogeneous Media}
\bvolume{6}(\bissue{4}),
\bfpage{681}--\blpage{694}
(\byear{2011})
\end{barticle}
\endbibitem

\bibitem{colombo_guerra_2}
\begin{barticle}
\bauthor{\bsnm{Colombo}, \binits{R.M.}},
\bauthor{\bsnm{Guerra}, \binits{G.}}:
\batitle{Hyperbolic balance laws with a non local source}.
\bjtitle{Communications in Partial Differential Equations}
\bvolume{32}(\bissue{12}),
\bfpage{1917}--\blpage{1939}
(\byear{2007})
\end{barticle}
\endbibitem

\bibitem{coron2013output}
\begin{barticle}
\bauthor{\bsnm{Coron}, \binits{J.}},
\bauthor{\bsnm{Wang}, \binits{Z.}}:
\batitle{Output feedback stabilization for a scalar conservation law with a
  nonlocal velocity}.
\bjtitle{SIAM Journal on Mathematical Analysis}
\bvolume{45}(\bissue{5}),
\bfpage{2646}--\blpage{2665}
(\byear{2013})
\end{barticle}
\endbibitem

\bibitem{chen2017global}
\begin{barticle}
\bauthor{\bsnm{Chen}, \binits{W.}},
\bauthor{\bsnm{Liu}, \binits{C.}},
\bauthor{\bsnm{Wang}, \binits{Z.}}:
\batitle{Global feedback stabilization for a class of nonlocal transport
  equations: The continuous and discrete case}.
\bjtitle{SIAM Journal on Control and Optimization}
\bvolume{55}(\bissue{2}),
\bfpage{760}--\blpage{784}
(\byear{2017})
\end{barticle}
\endbibitem

\bibitem{baker1996analytic}
\begin{barticle}
\bauthor{\bsnm{Baker}, \binits{G.R.}},
\bauthor{\bsnm{Li}, \binits{X.}},
\bauthor{\bsnm{Morlet}, \binits{A.C.}}:
\batitle{Analytic structure of two 1d-transport equations with nonlocal
  fluxes}.
\bjtitle{Physica D: Nonlinear Phenomena}
\bvolume{91}(\bissue{4}),
\bfpage{349}--\blpage{375}
(\byear{1996}).
\doiurl{10.1016/0167-2789(95)00271-5}
\end{barticle}
\endbibitem

\bibitem{piccoli2013transport}
\begin{barticle}
\bauthor{\bsnm{Piccoli}, \binits{B.}},
\bauthor{\bsnm{Rossi}, \binits{F.}}:
\batitle{Transport equation with nonlocal velocity in {Wasserstein} spaces:
  Convergence of numerical schemes}.
\bjtitle{Acta Applicandae Mathematicae}
\bvolume{124}(\bissue{1}),
\bfpage{73}--\blpage{105}
(\byear{2013})
\end{barticle}
\endbibitem

\bibitem{difrancesco2019deterministic}
\begin{barticle}
\bauthor{\bsnm{Francesco}, \binits{M.D.}},
\bauthor{\bsnm{Fagioli}, \binits{S.}},
\bauthor{\bsnm{Radici}, \binits{E.}}:
\batitle{Deterministic particle approximation for nonlocal transport equations
  with nonlinear mobility}.
\bjtitle{Journal of Differential Equations}
\bvolume{266}(\bissue{5}),
\bfpage{2830}--\blpage{2868}
(\byear{2019})
\end{barticle}
\endbibitem

\bibitem{bressan2019traffic}
\begin{botherref}
\oauthor{\bsnm{Bressan}, \binits{A.}},
\oauthor{\bsnm{Shen}, \binits{W.}}:
On traffic flow with nonlocal flux: a relaxation representation.
Archive for Rational Mechanics and Analysis volume
\textbf{237}
(2020)
\end{botherref}
\endbibitem

\bibitem{colombo2018local}
\begin{botherref}
\oauthor{\bsnm{Colombo}, \binits{M.}},
\oauthor{\bsnm{Crippa}, \binits{G.}},
\oauthor{\bsnm{Marconi}, \binits{E.}},
\oauthor{\bsnm{Spinolo}, \binits{L.V.}}:
Local limit of nonlocal traffic models: convergence results and total variation
  blow-up
(2018)
\end{botherref}
\endbibitem

\bibitem{colombo2015nonlocal}
\begin{barticle}
\bauthor{\bsnm{Colombo}, \binits{R.M.}},
\bauthor{\bsnm{Marcellini}, \binits{F.}}:
\batitle{Nonlocal systems of balance laws in several space dimensions with
  applications to laser technology}.
\bjtitle{Journal of Differential Equations}
\bvolume{259}(\bissue{11}),
\bfpage{6749}--\blpage{6773}
(\byear{2015})
\end{barticle}
\endbibitem

\bibitem{colombo2018nonlocal}
\begin{barticle}
\bauthor{\bsnm{Colombo}, \binits{R.M.}},
\bauthor{\bsnm{Rossi}, \binits{E.}}:
\batitle{Nonlocal conservation laws in bounded domains}.
\bjtitle{SIAM Journal on Mathematical Analysis}
\bvolume{50}(\bissue{4}),
\bfpage{4041}--\blpage{4065}
(\byear{2018}).
\doiurl{10.1137/18M1171783}
\end{barticle}
\endbibitem

\bibitem{chalons2018high}
\begin{barticle}
\bauthor{\bsnm{Chalons}, \binits{C.}},
\bauthor{\bsnm{Goatin}, \binits{P.}},
\bauthor{\bsnm{Villada}, \binits{L.M.}}:
\batitle{High-order numerical schemes for one-dimensional nonlocal conservation
  laws}.
\bjtitle{SIAM Journal on Scientific Computing}
\bvolume{40}(\bissue{1}),
\bfpage{288}--\blpage{305}
(\byear{2018}).
\doiurl{10.1137/16M110825X}
\end{barticle}
\endbibitem

\bibitem{friedrich2019maximum}
\begin{barticle}
\bauthor{\bsnm{Friedrich}, \binits{J.}},
\bauthor{\bsnm{Kolb}, \binits{O.}}:
\batitle{Maximum principle satisfying {CWENO} schemes for nonlocal conservation
  laws}.
\bjtitle{SIAM Journal on Scientific Computing}
\bvolume{41}(\bissue{2}),
\bfpage{973}--\blpage{988}
(\byear{2019})
\end{barticle}
\endbibitem

\bibitem{lee2019thresholds}
\begin{barticle}
\bauthor{\bsnm{Lee}, \binits{Y.}}:
\batitle{Thresholds for shock formation in traffic flow models with
  nonlocal-concave-convex flux}.
\bjtitle{Journal of Differential Equations}
\bvolume{266}(\bissue{1}),
\bfpage{580}--\blpage{599}
(\byear{2019})
\end{barticle}
\endbibitem

\bibitem{ridder2019traveling}
\begin{barticle}
\bauthor{\bsnm{Ridder}, \binits{J.}},
\bauthor{\bsnm{Shen}, \binits{W.}}:
\batitle{Traveling waves for nonlocal models of traffic flow}.
\bjtitle{Discrete \& Continuous Dynamical Systems - A}
\bvolume{39},
\bfpage{4001}
(\byear{2019})
\end{barticle}
\endbibitem

\bibitem{bayen2020modeling}
\begin{botherref}
\oauthor{\bsnm{Bayen}, \binits{A.}},
\oauthor{\bsnm{Keimer}, \binits{A.}},
\oauthor{\bsnm{Pflug}, \binits{L.}},
\oauthor{\bsnm{Veeravalli}, \binits{T.}}:
Modeling multi-lane traffic with moving obstacles by nonlocal balance laws.
submitted
(2021)
\end{botherref}
\endbibitem

\bibitem{coclite2020general}
\begin{botherref}
\oauthor{\bsnm{Coclite}, \binits{G.M.}},
\oauthor{\bsnm{Coron}, \binits{J.-M.}},
\oauthor{\bsnm{De~Nitti}, \binits{N.}},
\oauthor{\bsnm{Keimer}, \binits{A.}},
\oauthor{\bsnm{Pflug}, \binits{L.}}:
A general result on the approximation of local conservation laws by nonlocal
  conservation laws: The singular limit problem for exponential kernels.
arXiv preprint arXiv:2012.13203
(2020)
\end{botherref}
\endbibitem

\bibitem{coclite2021existence}
\begin{botherref}
\oauthor{\bsnm{Coclite}, \binits{G.M.}},
\oauthor{\bsnm{Nitti}, \binits{N.D.}},
\oauthor{\bsnm{Keimer}, \binits{A.}},
\oauthor{\bsnm{Pflug}, \binits{L.}}:
On existence and uniqueness of weak solutions to nonlocal conservation laws
  with {\(\sBV\)} kernels.
submitted
(2021)
\end{botherref}
\endbibitem

\bibitem{bressan2021entropy}
\begin{barticle}
\bauthor{\bsnm{Bressan}, \binits{A.}},
\bauthor{\bsnm{Shen}, \binits{W.}}:
\batitle{Entropy admissibility of the limit solution for a nonlocal model of
  traffic flow}.
\bjtitle{Communications in Mathematical Sciences}
\bvolume{19}(\bissue{5}),
\bfpage{1447}--\blpage{1450}
(\byear{2021})
\end{barticle}
\endbibitem

\bibitem{COLOMBO20211653}
\begin{barticle}
\bauthor{\bsnm{Colombo}, \binits{M.}},
\bauthor{\bsnm{Crippa}, \binits{G.}},
\bauthor{\bsnm{Marconi}, \binits{E.}},
\bauthor{\bsnm{Spinolo}, \binits{L.V.}}:
\batitle{Local limit of nonlocal traffic models: Convergence results and total
  variation blow-up}.
\bjtitle{Annales de l'Institut Henri Poincaré C, Analyse non linéaire}
\bvolume{38}(\bissue{5}),
\bfpage{1653}--\blpage{1666}
(\byear{2021})
\end{barticle}
\endbibitem

\bibitem{karafyllis2020analysis}
\begin{barticle}
\bauthor{\bsnm{Karafyllis}, \binits{I.}},
\bauthor{\bsnm{Theodosis}, \binits{D.}},
\bauthor{\bsnm{Papageorgiou}, \binits{M.}}:
\batitle{Analysis and control of a non-local pde traffic flow model}.
\bjtitle{International Journal of Control}
\bvolume{0}(\bissue{0}),
\bfpage{1}--\blpage{19}
(\byear{2020})
\end{barticle}
\endbibitem

\bibitem{Filippis}
\begin{barticle}
\bauthor{\bsnm{De~Filippis}, \binits{C.}},
\bauthor{\bsnm{Goatin}, \binits{P.}}:
\batitle{The initial--boundary value problem for general non-local scalar
  conservation laws in one space dimension}.
\bjtitle{Nonlinear Analysis}
\bvolume{161},
\bfpage{131}--\blpage{156}
(\byear{2017})
\end{barticle}
\endbibitem

\bibitem{crippa2013existence}
\begin{barticle}
\bauthor{\bsnm{Crippa}, \binits{G.}},
\bauthor{\bsnm{L{\'e}cureux-Mercier}, \binits{M.}}:
\batitle{Existence and uniqueness of measure solutions for a system of
  continuity equations with non-local flow}.
\bjtitle{Nonlinear Differential Equations and Applications NoDEA}
\bvolume{20}(\bissue{3}),
\bfpage{523}--\blpage{537}
(\byear{2013})
\end{barticle}
\endbibitem

\bibitem{scialanga}
\begin{barticle}
\bauthor{\bsnm{Goatin}, \binits{P.}},
\bauthor{\bsnm{Scialanga}, \binits{S.}}:
\batitle{Well-posedness and finite volume approximations of the {LWR} traffic
  flow model with non-local velocity}.
\bjtitle{Networks and Hetereogeneous Media}
\bvolume{11}(\bissue{1}),
\bfpage{107}--\blpage{121}
(\byear{2016})
\end{barticle}
\endbibitem

\bibitem{chiarello}
\begin{barticle}
\bauthor{\bsnm{Chiarello}, \binits{F.A.}},
\bauthor{\bsnm{Goatin}, \binits{P.}}:
\batitle{Global entropy weak solutions for general non-local traffic flow
  models with anisotropic kernel}.
\bjtitle{ESAIM: Math. Modelling and Numerical Analysis}
\bvolume{52}(\bissue{1}),
\bfpage{163}--\blpage{180}
(\byear{2018})
\end{barticle}
\endbibitem

\bibitem{blandin2016well}
\begin{barticle}
\bauthor{\bsnm{Blandin}, \binits{S.}},
\bauthor{\bsnm{Goatin}, \binits{P.}}:
\batitle{Well-posedness of a conservation law with non-local flux arising in
  traffic flow modeling}.
\bjtitle{Numerische Mathematik}
\bvolume{132}(\bissue{2}),
\bfpage{217}--\blpage{241}
(\byear{2016})
\end{barticle}
\endbibitem

\bibitem{chiarello2019non-local}
\begin{barticle}
\bauthor{\bsnm{Chiarello}, \binits{F.A.}},
\bauthor{\bsnm{Goatin}, \binits{P.}}:
\batitle{Non-local multi-class traffic flow models}.
\bjtitle{Networks \& Heterogeneous Media}
\bvolume{14},
\bfpage{371}
(\byear{2019})
\end{barticle}
\endbibitem

\bibitem{goatin2019wellposedness}
\begin{botherref}
\oauthor{\bsnm{Goatin}, \binits{P.}},
\oauthor{\bsnm{Rossi}, \binits{E.-}}:
Well-posedness of {IBVP} for {1D} scalar non-local conservation laws.
ZAMM - Journal of Applied Mathematics and Mechanics / Zeitschrift für
  Angewandte Mathematik und Mechanik
\textbf{99}(11)
(2019)
\end{botherref}
\endbibitem

\bibitem{gong2021weak}
\begin{barticle}
\bauthor{\bsnm{Gong}, \binits{X.}},
\bauthor{\bsnm{Kawski}, \binits{M.}}:
\batitle{Weak measure-valued solutions of a nonlinear hyperbolic conservation
  law}.
\bjtitle{SIAM Journal on Mathematical Analysis}
\bvolume{53}(\bissue{4}),
\bfpage{4417}--\blpage{4444}
(\byear{2021})
\end{barticle}
\endbibitem

\bibitem{kloeden}
\begin{barticle}
\bauthor{\bsnm{Kloeden}, \binits{P.E.}},
\bauthor{\bsnm{Lorenz}, \binits{T.}}:
\batitle{Nonlocal multi-scale traffic flow models: analysis beyond vector
  spaces}.
\bjtitle{Bulletin of Mathematical Sciences}
\bvolume{6}(\bissue{3}),
\bfpage{453}--\blpage{514}
(\byear{2016})
\end{barticle}
\endbibitem

\bibitem{chiarello2021existence}
\begin{botherref}
\oauthor{\bsnm{Chiarello}, \binits{F.A.}},
\oauthor{\bsnm{Villada}, \binits{L.M.}}:
On existence of entropy solutions for 1d nonlocal conservation laws with space
  discontinuous flux
(2021)
{\href{https://arxiv.org/abs/2103.13362}{{arXiv:2103.13362}}}
\end{botherref}
\endbibitem

\bibitem{chiarello2021nonlocal}
\begin{botherref}
\oauthor{\bsnm{Chiarello}, \binits{F.A.}},
\oauthor{\bsnm{Coclite}, \binits{G.M.}}:
Non-local scalar conservation laws with discontinuous flux
(2021)
{\href{https://arxiv.org/abs/2003.01975}{{arXiv:2003.01975}}}
\end{botherref}
\endbibitem

\bibitem{lighthill1955kinematic}
\begin{barticle}
\bauthor{\bsnm{Lighthill}, \binits{M.J.}},
\bauthor{\bsnm{Whitham}, \binits{G.B.}}:
\batitle{On kinematic waves ii. a theory of traffic flow on long crowded
  roads}.
\bjtitle{Proceedings of the Royal Society of London. Series A. Mathematical and
  Physical Sciences}
\bvolume{229}(\bissue{1178}),
\bfpage{317}--\blpage{345}
(\byear{1955})
\end{barticle}
\endbibitem

\bibitem{diperna}
\begin{barticle}
\bauthor{\bsnm{DiPerna}, \binits{R.J.}},
\bauthor{\bsnm{Lions}, \binits{P.-L.}}:
\batitle{Ordinary differential equations, transport theory and sobolev spaces}.
\bjtitle{Inventiones mathematicae}
\bvolume{98}(\bissue{3}),
\bfpage{511}--\blpage{547}
(\byear{1989})
\end{barticle}
\endbibitem

\bibitem{wang}
\begin{barticle}
\bauthor{\bsnm{Coron}, \binits{J.-M.}},
\bauthor{\bsnm{Kawski}, \binits{M.}},
\bauthor{\bsnm{Wang}, \binits{Z.}}:
\batitle{Analysis of a conservation law modeling a highly re-entrant
  manufacturing system}.
\bjtitle{Disc. Contin. Dyn. Syst. Ser. B}
\bvolume{14}(\bissue{4}),
\bfpage{1337}--\blpage{1359}
(\byear{2010}).
\doiurl{10.3934/dcdsb.2010.14.1337}
\end{barticle}
\endbibitem

\bibitem{filippov1960differential}
\begin{barticle}
\bauthor{\bsnm{Filippov}, \binits{A.F.}}:
\batitle{Differential equations with discontinuous right-hand side}.
\bjtitle{Matematicheskii sbornik}
\bvolume{93}(\bissue{1}),
\bfpage{99}--\blpage{128}
(\byear{1960})
\end{barticle}
\endbibitem

\bibitem{filippov1988}
\begin{bbook}
\bauthor{\bsnm{Filippov}, \binits{A.F.}}:
\bbtitle{Differential Equations with Discontinuous Righthand Sides}.
\bpublisher{Springer},
\blocation{Dordrecht}
(\byear{1988}).
\doiurl{10.1007/978-94-015-7793-9}
\end{bbook}
\endbibitem

\bibitem{bressan1998uniqueness}
\begin{barticle}
\bauthor{\bsnm{Bressan}, \binits{A.}},
\bauthor{\bsnm{Shen}, \binits{W.}}:
\batitle{Uniqueness for discontinuous ode and conservation laws}.
\bjtitle{Nonlinear analysis}
\bvolume{34}(\bissue{5}),
\bfpage{637}--\blpage{652}
(\byear{1998})
\end{barticle}
\endbibitem

\bibitem{bownds1970uniqueness}
\begin{barticle}
\bauthor{\bsnm{Bownds}, \binits{J.M.}}:
\batitle{A uniqueness theorem for \(y' = f(x,y)\) using a certain factorization
  of \(f\)}.
\bjtitle{Journal of Differential Equations}
\bvolume{7}(\bissue{2}),
\bfpage{227}--\blpage{231}
(\byear{1970})
\end{barticle}
\endbibitem

\bibitem{coddington1955theory}
\begin{bbook}
\bauthor{\bsnm{Coddington}, \binits{E.A.}},
\bauthor{\bsnm{Levinson}, \binits{N.}}:
\bbtitle{Theory of Ordinary Differential Equations}.
\bsertitle{International series in pure and applied mathematics}.
\bpublisher{Tata McGraw-Hill Education},
\blocation{{New York}}
(\byear{1955})
\end{bbook}
\endbibitem

\bibitem{fjordholm2018sharp}
\begin{barticle}
\bauthor{\bsnm{Fjordholm}, \binits{U.S.}}:
\batitle{Sharp uniqueness conditions for one-dimensional, autonomous ordinary
  differential equations}.
\bjtitle{Comptes Rendus Mathematique}
\bvolume{356}(\bissue{9}),
\bfpage{916}--\blpage{921}
(\byear{2018})
\end{barticle}
\endbibitem

\bibitem{osgood1898beweis}
\begin{barticle}
\bauthor{\bsnm{Osgood}, \binits{W.F.}}:
\batitle{{Beweis der Existenz einer L{\"o}sung der Differentialgleichung
  \(\tfrac{dy}{dx}= f(x,y)\) ohne Hinzunahme der Cauchy-Lipschitz'schen
  Bedingung}}.
\bjtitle{Monatshefte f{\"u}r Mathematik und Physik}
\bvolume{9}(\bissue{1}),
\bfpage{331}--\blpage{345}
(\byear{1898})
\end{barticle}
\endbibitem

\bibitem{hu1991differential}
\begin{barticle}
\bauthor{\bsnm{Hu}, \binits{S.}}:
\batitle{Differential equations with discontinuous right-hand sides}.
\bjtitle{Journal of Mathematical Analysis and Applications}
\bvolume{154}(\bissue{2}),
\bfpage{377}--\blpage{390}
(\byear{1991})
\end{barticle}
\endbibitem

\bibitem{binding1979differential}
\begin{barticle}
\bauthor{\bsnm{Binding}, \binits{P.}}:
\batitle{The differential equation \(\dot{x} = f(x)\)}.
\bjtitle{Journal of Differential Equations}
\bvolume{31}(\bissue{2}),
\bfpage{183}--\blpage{199}
(\byear{1979})
\end{barticle}
\endbibitem

\bibitem{agarwal1993uniqueness}
\begin{bbook}
\bauthor{\bsnm{Agarwal}, \binits{R.P.}},
\bauthor{\bsnm{Lakshmikantham}, \binits{V.}}:
\bbtitle{Uniqueness and Nonuniqueness Criteria for Ordinary Differential
  Equations}
vol. \bseriesno{6}.
\bpublisher{World Scientific},
\blocation{Singapore}
(\byear{1993})
\end{bbook}
\endbibitem

\bibitem{garavello2007conservation}
\begin{barticle}
\bauthor{\bsnm{Garavello}, \binits{M.}},
\bauthor{\bsnm{Natalini}, \binits{R.}},
\bauthor{\bsnm{Piccoli}, \binits{B.}},
\bauthor{\bsnm{Terracina}, \binits{A.}}:
\batitle{Conservation laws with discontinuous flux}.
\bjtitle{Networks \& Heterogeneous Media}
\bvolume{2}(\bissue{1}),
\bfpage{159}
(\byear{2007})
\end{barticle}
\endbibitem

\bibitem{adimurthi2005optimal}
\begin{barticle}
\bauthor{\bsnm{Adimurthi}},
\bauthor{\bsnm{Mishra}, \binits{S.}},
\bauthor{\bsnm{Gowda}, \binits{G.D.V.}}:
\batitle{Optimal entropy solutions for conservation laws with discontinuous
  flux-functions}.
\bjtitle{Journal of Hyperbolic Differential Equations}
\bvolume{02}(\bissue{04}),
\bfpage{783}--\blpage{837}
(\byear{2005}).
\doiurl{10.1142/S0219891605000622}
\end{barticle}
\endbibitem

\bibitem{buerger2009engquist}
\begin{barticle}
\bauthor{\bsnm{B\"urger}, \binits{R.}},
\bauthor{\bsnm{Karlsen}, \binits{K.H.}},
\bauthor{\bsnm{Towers}, \binits{J.D.}}:
\batitle{An engquist–osher-type scheme for conservation laws with
  discontinuous flux adapted to flux connections}.
\bjtitle{SIAM Journal on Numerical Analysis}
\bvolume{47}(\bissue{3}),
\bfpage{1684}--\blpage{1712}
(\byear{2009}).
\doiurl{10.1137/07069314X}
\end{barticle}
\endbibitem

\bibitem{klausen1999stability}
\begin{barticle}
\bauthor{\bsnm{Klausen}, \binits{R.A.}},
\bauthor{\bsnm{Risebro}, \binits{N.H.}}:
\batitle{Stability of conservation laws with discontinuous coefficients}.
\bjtitle{Journal of Differential Equations}
\bvolume{157}(\bissue{1}),
\bfpage{41}--\blpage{60}
(\byear{1999}).
\doiurl{10.1006/jdeq.1998.3624}
\end{barticle}
\endbibitem

\bibitem{klingenberg1995convex}
\begin{barticle}
\bauthor{\bsnm{Klingenberg}, \binits{C.}},
\bauthor{\bsnm{Risebro}, \binits{N.H.}}:
\batitle{Convex conservation laws with discontinuous coefficients. existence,
  uniqueness and asymptotic behavior}.
\bjtitle{Communications in Partial Differential Equations}
\bvolume{20}(\bissue{11-12}),
\bfpage{1959}--\blpage{1990}
(\byear{1995})
\end{barticle}
\endbibitem

\bibitem{gimse1993conservation}
\begin{barticle}
\bauthor{\bsnm{Gimse}, \binits{T.}}:
\batitle{Conservation laws with discontinuous flux functions}.
\bjtitle{SIAM Journal on Mathematical Analysis}
\bvolume{24}(\bissue{2}),
\bfpage{279}--\blpage{289}
(\byear{1993}).
\doiurl{10.1137/0524018}
\end{barticle}
\endbibitem

\bibitem{audusse2005uniqueness}
\begin{barticle}
\bauthor{\bsnm{Audusse}, \binits{E.}},
\bauthor{\bsnm{Perthame}, \binits{B.}}:
\batitle{Uniqueness for scalar conservation laws with discontinuous flux via
  adapted entropies}.
\bjtitle{Proceedings of the Royal Society of Edinburgh: Section A Mathematics}
\bvolume{135}(\bissue{2}),
\bfpage{253}--\blpage{265}
(\byear{2005})
\end{barticle}
\endbibitem

\bibitem{buerger2008conservation}
\begin{botherref}
\oauthor{\bsnm{B\"urger}, \binits{R.}},
\oauthor{\bsnm{Karlsen}, \binits{K.H.}}:
Conservation laws with discontinuous flux: a short introduction.
Journal of Engineering Mathematics
(2008)
\end{botherref}
\endbibitem

\bibitem{ostrov2002solutions}
\begin{barticle}
\bauthor{\bsnm{Ostrov}, \binits{D.N.}}:
\batitle{Solutions of {H}amilton--{J}acobi equations and scalar conservation
  laws with discontinuous space--time dependence}.
\bjtitle{Journal of Differential Equations}
\bvolume{182}(\bissue{1}),
\bfpage{51}--\blpage{77}
(\byear{2002})
\end{barticle}
\endbibitem

\bibitem{andreianov2010vanishing}
\begin{barticle}
\bauthor{\bsnm{Andreianov}, \binits{B.}},
\bauthor{\bsnm{Karlsen}, \binits{K.H.}},
\bauthor{\bsnm{Risebro}, \binits{N.H.}}:
\batitle{On vanishing viscosity approximation of conservation laws with
  discontinuous flux.}
\bjtitle{Networks and Heterogeneous Media}
\bvolume{5}(\bissue{3}),
\bfpage{617}
(\byear{2010})
\end{barticle}
\endbibitem

\bibitem{adimurthi2004godunov}
\begin{barticle}
\bauthor{\bsnm{Adimurthi}},
\bauthor{\bsnm{Jaffr{\'e}}, \binits{J.}},
\bauthor{\bsnm{Gowda}, \binits{G.V.}}:
\batitle{Godunov-type methods for conservation laws with a flux function
  discontinuous in space}.
\bjtitle{SIAM Journal on Numerical Analysis}
\bvolume{42}(\bissue{1}),
\bfpage{179}--\blpage{208}
(\byear{2004})
\end{barticle}
\endbibitem

\bibitem{karlsen2004relaxation}
\begin{barticle}
\bauthor{\bsnm{Karlsen}, \binits{K.}},
\bauthor{\bsnm{Klingenberg}, \binits{C.}},
\bauthor{\bsnm{Risebro}, \binits{N.}}:
\batitle{A relaxation scheme for conservation laws with a discontinuous
  coefficient}.
\bjtitle{Mathematics of computation}
\bvolume{73}(\bissue{247}),
\bfpage{1235}--\blpage{1259}
(\byear{2004})
\end{barticle}
\endbibitem

\bibitem{karlsen2004convergence}
\begin{barticle}
\bauthor{\bsnm{Karlsen}, \binits{K.H.}},
\bauthor{\bsnm{Towers}, \binits{J.D.}}:
\batitle{Convergence of the lax-friedrichs scheme and stability for
  conservation laws with a discontinuous space-time dependent flux}.
\bjtitle{Chinese Annals of Mathematics}
\bvolume{25}(\bissue{03}),
\bfpage{287}--\blpage{318}
(\byear{2004})
\end{barticle}
\endbibitem

\bibitem{towers2000convergence}
\begin{barticle}
\bauthor{\bsnm{Towers}, \binits{J.D.}}:
\batitle{Convergence of a difference scheme for conservation laws with a
  discontinuous flux}.
\bjtitle{SIAM journal on numerical analysis}
\bvolume{38}(\bissue{2}),
\bfpage{681}--\blpage{698}
(\byear{2000})
\end{barticle}
\endbibitem

\bibitem{towers2001difference}
\begin{barticle}
\bauthor{\bsnm{Towers}, \binits{J.D.}}:
\batitle{A difference scheme for conservation laws with a discontinuous flux:
  the nonconvex case}.
\bjtitle{SIAM journal on numerical analysis}
\bvolume{39}(\bissue{4}),
\bfpage{1197}--\blpage{1218}
(\byear{2001})
\end{barticle}
\endbibitem

\bibitem{adimurthi2011existence}
\begin{barticle}
\bauthor{\bsnm{Adimurthi}},
\bauthor{\bsnm{Ghoshal}, \binits{S.S.}},
\bauthor{\bsnm{Dutta}, \binits{R.}},
\bauthor{\bsnm{Veerappa~Gowda}, \binits{G.D.}}:
\batitle{Existence and nonexistence of tv bounds for scalar conservation laws
  with discontinuous flux}.
\bjtitle{Communications on Pure and Applied Mathematics}
\bvolume{64}(\bissue{1}),
\bfpage{84}--\blpage{115}
(\byear{2011})
\end{barticle}
\endbibitem

\bibitem{adimurthi2007explicit}
\begin{barticle}
\bauthor{\bsnm{Adimurthi}},
\bauthor{\bsnm{Mishra}, \binits{S.}},
\bauthor{\bsnm{{Veerappa Gowda}}, \binits{G.D.}}:
\batitle{Explicit {H}opf–{L}ax type formulas for {H}amilton–{J}acobi
  equations and conservation laws with discontinuous coefficients}.
\bjtitle{Journal of Differential Equations}
\bvolume{241}(\bissue{1}),
\bfpage{1}--\blpage{31}
(\byear{2007}).
\doiurl{10.1016/j.jde.2007.05.039}
\end{barticle}
\endbibitem

\bibitem{mishra2007existence}
\begin{barticle}
\bauthor{\bsnm{Mishra}, \binits{S.}},
\bauthor{\bsnm{Veerappa~Gowda}, \binits{G.}}, \betal:
\batitle{Existence and stability of entropy solutions for a conservation law
  with discontinuous non-convex fluxes}.
\bjtitle{Networks and Heterogeneous Media}
\bvolume{2}(\bissue{1}),
\bfpage{127}--\blpage{157}
(\byear{2007})
\end{barticle}
\endbibitem

\bibitem{andreianov2011theory}
\begin{barticle}
\bauthor{\bsnm{Andreianov}, \binits{B.}},
\bauthor{\bsnm{Karlsen}, \binits{K.H.}},
\bauthor{\bsnm{Risebro}, \binits{N.H.}}:
\batitle{A theory of \({L}^{1}\)-dissipative solvers for scalar conservation
  laws with discontinuous flux}.
\bjtitle{Archive for rational mechanics and analysis}
\bvolume{201}(\bissue{1}),
\bfpage{27}--\blpage{86}
(\byear{2011})
\end{barticle}
\endbibitem

\bibitem{crippa2011lagrangian}
\begin{barticle}
\bauthor{\bsnm{Crippa}, \binits{G.}}:
\batitle{{L}agrangian flows and the one-dimensional {P}eano phenomenon for
  {ODE}s}.
\bjtitle{Journal of Differential Equations}
\bvolume{250}(\bissue{7}),
\bfpage{3135}--\blpage{3149}
(\byear{2011})
\end{barticle}
\endbibitem

\bibitem{besson2007solutions}
\begin{barticle}
\bauthor{\bsnm{Besson}, \binits{O.}},
\bauthor{\bsnm{Pousin}, \binits{J.}}:
\batitle{Solutions for linear conservation laws with velocity fields in
  \({L}^{\infty}\)}.
\bjtitle{Archive for Rational Mechanics and Analysis}
\bvolume{186}(\bissue{1}),
\bfpage{159}--\blpage{175}
(\byear{2007})
\end{barticle}
\endbibitem

\bibitem{ambrosio2004transport}
\begin{barticle}
\bauthor{\bsnm{Ambrosio}, \binits{L.}}:
\batitle{Transport equation and cauchy problem for \(bv\) vector fields}.
\bjtitle{Inventiones mathematicae}
\bvolume{158}(\bissue{2}),
\bfpage{227}--\blpage{260}
(\byear{2004})
\end{barticle}
\endbibitem

\bibitem{ambrosio2014continuity}
\begin{barticle}
\bauthor{\bsnm{Ambrosio}, \binits{L.}},
\bauthor{\bsnm{Crippa}, \binits{G.}}:
\batitle{Continuity equations and ode flows with non-smooth velocity}.
\bjtitle{Proceedings of the Royal Society of Edinburgh Section A: Mathematics}
\bvolume{144}(\bissue{6}),
\bfpage{1191}--\blpage{1244}
(\byear{2014})
\end{barticle}
\endbibitem

\bibitem{petrova1999linear}
\begin{barticle}
\bauthor{\bsnm{Petrova}, \binits{G.}},
\bauthor{\bsnm{Popov}, \binits{B.}}:
\batitle{Linear transport equations with discontinuous coefficients}.
\bjtitle{Communications in partial differential equations}
\bvolume{24}(\bissue{9-10}),
\bfpage{1849}--\blpage{1873}
(\byear{1999})
\end{barticle}
\endbibitem

\bibitem{bouchut1998one}
\begin{barticle}
\bauthor{\bsnm{Bouchut}, \binits{F.}},
\bauthor{\bsnm{James}, \binits{F.}}:
\batitle{One-dimensional transport equations with discontinuous coefficients}.
\bjtitle{Nonlinear Analysis}
\bvolume{32}(\bissue{7}),
\bfpage{891}
(\byear{1998})
\end{barticle}
\endbibitem

\bibitem{CLOP201945}
\begin{barticle}
\bauthor{\bsnm{Clop}, \binits{A.}},
\bauthor{\bsnm{Jylh\"a}, \binits{H.}},
\bauthor{\bsnm{Mateu}, \binits{J.}},
\bauthor{\bsnm{Orobitg}, \binits{J.}}:
\batitle{Well-posedness for the continuity equation for vector fields with
  suitable modulus of continuity}.
\bjtitle{Journal of Functional Analysis}
\bvolume{276}(\bissue{1}),
\bfpage{45}--\blpage{77}
(\byear{2019})
\end{barticle}
\endbibitem

\bibitem{coddington}
\begin{bbook}
\bauthor{\bsnm{Coddington}, \binits{E.A.}},
\bauthor{\bsnm{Levinson}, \binits{N.}}:
\bbtitle{Theory of Ordinary Differential Equations}.
\bpublisher{Tata McGraw-Hill Education},
\blocation{New York}
(\byear{1955})
\end{bbook}
\endbibitem

\bibitem{walter}
\begin{bbook}
\bauthor{\bsnm{Walter}, \binits{W.}}:
\bbtitle{Differential and Integral Inequalities}.
\bpublisher{Springer}, \blocation{???}
(\byear{1970})
\end{bbook}
\endbibitem

\bibitem{folland}
\begin{bbook}
\bauthor{\bsnm{Folland}, \binits{G.B.}}:
\bbtitle{Real Analysis},
\bedition{2}nd edn.
\bsertitle{Pure and Applied Mathematics (New York)},
p. \bfpage{386}.
\bpublisher{John Wiley \& Sons Inc.},
\blocation{New York}
(\byear{1999}).
\bcomment{Modern techniques and their applications, A Wiley-Interscience
  Publication}
\end{bbook}
\endbibitem

\bibitem{runge1895}
\begin{barticle}
\bauthor{\bsnm{Runge}, \binits{C.}}:
\batitle{{\"U}ber die numerische {A}ufl\"osung von {D}ifferentialgleichungen}.
\bjtitle{Mathematische Annalen}
\bvolume{46}(\bissue{2}),
\bfpage{167}--\blpage{178}
(\byear{1895}).
\doiurl{10.1007/bf01446807}
\end{barticle}
\endbibitem

\bibitem{kutta1901beitrag}
\begin{barticle}
\bauthor{\bsnm{Kutta}, \binits{W.}}:
\batitle{Beitrag zur n\"aherungsweisen {I}ntegration totaler
  {D}ifferentialgleichungen}.
\bjtitle{Z. Math. Phys.}
\bvolume{46},
\bfpage{435}--\blpage{453}
(\byear{1901})
\end{barticle}
\endbibitem

\bibitem{leoni}
\begin{bbook}
\bauthor{\bsnm{Leoni}, \binits{G.}}:
\bbtitle{A First Course in {S}obolev Spaces}.
\bsertitle{Graduate Studies in Mathematics},
vol. \bseriesno{105},
p. \bfpage{607}.
\bpublisher{American Mathematical Society},
\blocation{Providence, RI}
(\byear{2009})
\end{bbook}
\endbibitem

\bibitem{wangshang}
\begin{barticle}
\bauthor{\bsnm{Shang}, \binits{P.}},
\bauthor{\bsnm{Wang}, \binits{Z.}}:
\batitle{Analysis and control of a scalar conservation law modeling a highly
  re-entrant manufacturing system}.
\bjtitle{Journal of Differential Equations}
\bvolume{250}(\bissue{2}),
\bfpage{949}--\blpage{982}
(\byear{2011}).
\doiurl{10.1016/j.jde.2010.09.003}
\end{barticle}
\endbibitem

\bibitem{zeidler}
\begin{bbook}
\bauthor{\bsnm{Zeidler}, \binits{E.}}:
\bbtitle{Applied Functional Analysis: Applications to Mathematical Physics}.
\bsertitle{Applied Mathematical Sciences}.
\bpublisher{Springer},
\blocation{New York}
(\byear{1995})
\end{bbook}
\endbibitem

\bibitem{lwr_1}
\begin{barticle}
\bauthor{\bsnm{Lighthill}, \binits{M.J.}},
\bauthor{\bsnm{Whitham}, \binits{G.B.}}:
\batitle{On kinematic waves. i. flood movement in long rivers}.
\bjtitle{Proceedings of the Royal Society of London. Series A. Mathematical and
  Physical Sciences}
\bvolume{229}(\bissue{1178}),
\bfpage{281}--\blpage{316}
(\byear{1955})
\end{barticle}
\endbibitem

\bibitem{lwr_2}
\begin{barticle}
\bauthor{\bsnm{Richards}, \binits{P.I.}}:
\batitle{Shock waves on the highway}.
\bjtitle{Operations research}
\bvolume{4}(\bissue{1}),
\bfpage{42}--\blpage{51}
(\byear{1956})
\end{barticle}
\endbibitem

\bibitem{greenshields}
\begin{bchapter}
\bauthor{\bsnm{Greenshields}, \binits{B.}},
\bauthor{\bsnm{Channing}, \binits{W.}},
\bauthor{\bsnm{Miller}, \binits{H.}}, \betal:
\bctitle{A study of traffic capacity}.
In: \bbtitle{Highway Research Board Proceedings},
vol. \bseriesno{1935}
(\byear{1935}).
\bcomment{National Research Council (USA), Highway Research Board}
\end{bchapter}
\endbibitem

\bibitem{brezis}
\begin{bbook}
\bauthor{\bsnm{Brezis}, \binits{H.}}:
\bbtitle{Functional Analysis, {S}obolev Spaces and Partial Differential
  Equations}.
\bsertitle{Universitext},
p. \bfpage{599}.
\bpublisher{Springer},
\blocation{New York}
(\byear{2011})
\end{bbook}
\endbibitem

\end{thebibliography}

\end{document}